\documentclass{classpreprint}

\usepackage{geometry}
\geometry{left=18mm,right=18mm}

\usepackage[utf8]{inputenc}
\usepackage[T1]{fontenc}
\usepackage[english]{babel}
\usepackage{mathtools,amsmath,amssymb,amsfonts,amsthm,mathrsfs}
\usepackage{bbm}
\mathtoolsset{showonlyrefs}
\usepackage{tikz}

\usepackage{booktabs}
\usepackage{multirow}
\usepackage{rotating,tabularx}
\usepackage{adjustbox}
\usepackage{caption}
\usepackage{enumitem}

\newtheorem{theorem}{Theorem}[section]
\newtheorem{definition}{Definition}[section]
\newtheorem{lemma}{Lemma}[section]
\newtheorem{proposition}{Proposition}[section]
\newtheorem{corollary}{Corollary}[section]
\theoremstyle{definition}
\newtheorem{example}{Example}[section]
\theoremstyle{definition}
\newtheorem{remark}{Remark}[section]

\numberwithin{equation}{section}

\begin{document}

\title[Periodic parameter-dependent $\phi$-Laplacian equations]
{Periodic solutions to parameter-dependent \\ equations with a $\phi$-Laplacian type operator}

\author[G.~Feltrin]{Guglielmo Feltrin}
\address{
Department of Mathematics, University of Torino\\
via Carlo Alberto 10, 10123 Torino, Italy}
\email{guglielmo.feltrin@unito.it}

\author[E.~Sovrano]{Elisa Sovrano}
\address{
Department of Mathematics, Computer Science and Physics, University of Udine\\
via delle Scienze 206, 33100 Udine, Italy}
\email{sovrano.elisa@spes.uniud.it}

\author[F.~Zanolin]{Fabio Zanolin}
\address{
Department of Mathematics, Computer Science and Physics, University of Udine\\
via delle Scienze 206, 33100 Udine, Italy}
\email{fabio.zanolin@uniud.it}

\thanks{Work partially supported by the Grup\-po Na\-zio\-na\-le per l'Anali\-si Ma\-te\-ma\-ti\-ca, la Pro\-ba\-bi\-li\-t\`{a} e le lo\-ro Appli\-ca\-zio\-ni (GNAMPA) of the Isti\-tu\-to Na\-zio\-na\-le di Al\-ta Ma\-te\-ma\-ti\-ca (INdAM). The first author is supported by the project ERC Advanced Grant 2013 n.~339958 ``Complex Patterns for Strongly Interacting Dynamical Systems - COMPAT''.
\\
\textbf{Preprint -- August 2018}}

\subjclass{34B08, 34B15, 34C25}
\keywords{Periodic solutions, Multiplicity results, Ambrosetti--Prodi alternative, Topological degree, $\phi$-Laplacian}

\date{}

\begin{abstract}
We study the periodic boundary value problem associated with the $\phi$-Laplacian equation of the form
$(\phi(u'))'+f(u)u'+g(t,u)=s$, where $s$ is a real parameter, $f$ and $g$ are continuous functions, and
$g$ is $T$-periodic in the variable $t$.
The interest is in Ambrosetti--Prodi type alternatives which provide the existence of zero, one or two solutions depending
on the choice of the parameter $s$. We investigate this problem for a broad family of nonlinearities,
under non-uniform type conditions on $g(t,u)$ as $u\to \pm\infty$.
We generalize, in a unified framework, various classical and recent results on parameter-dependent nonlinear equations.
\end{abstract}

\maketitle

\section{Introduction}\label{section-1}
In this paper, we study the problem of existence, non-existence, and multiplicity of $T$-periodic solutions
of periodic boundary value problems associated with the $\phi$-Laplacian generalized Li\'{e}nard equation
\begin{equation*}
(\phi(u'))'+f(u)u'+g(t,u)=s,
\leqno{(\mathscr{E}_{s})}
\end{equation*}
where $s$ is a real parameter. Our aim is to unify, in a generalized setting, different classical and recent results obtained in this area concerning
the trichotomy given by zero/one/two solutions by varying the parameter $s$. Such kind of alternative can be traced back to the pioneering work by Ambrosetti and Prodi~\cite{AmPr-72}.
In more detail, we discuss several configurations of $g$ related to the works by Fabry, Mawhin, Nkashama~\cite{FMN-86}, and Bereanu, Mawhin~\cite{BeMa-08}.

Looking at \cite{FMN-86}, the Authors considered the parameter-dependent Li\'{e}nard equation
\begin{equation}\label{eq-lienard}
u''+f(u)u'+g(t,u)=s,
\end{equation}
with $g$ a continuous function, $T$-periodic in $t$, and satisfying
\begin{equation*}
\lim_{|u|\to+\infty}g(t,u)=+\infty,\quad\text{uniformly in $t$},
\end{equation*}
In this framework, they proved the existence of a value $s_{0}\in\mathbb{R}$ such that the $T$-periodic problem associated with \eqref{eq-lienard} satisfies
\begin{quote}
\textsc{Alternative by Ambrosetti--Prodi (AP)}: \textit{there exist zero, at least one or at least two solutions, provided that $s<s_{0}$, $s=s_{0}$ or $s>s_{0}$.}
\end{quote}
Actually, this kind of theorems has been extended to more general equations. In particular, the study in \cite{Ma-06} concerns
nonlinear differential operators such as the $\phi$-Laplacians and considers $(\mathscr{E}_{s})$ as well.
A typical application of the results in \cite{Ma-06} can be written for the weighted equation
\begin{equation*}
(\phi(u'))'+f(u)u'+a(t)q(u)=s+e(t),
\leqno{(\mathscr{W\!E}_{s})}
\end{equation*}
for $q$ satisfying $\lim_{|u|\to+\infty}q(u)=+\infty$, and $a,e\colon\mathbb{R}\to\mathbb{R}$ continuous $T$-periodic functions with $\min a>0$.

It is interesting to observe that phenomena similar to the one in (AP) have been discovered also for different kinds of nonlinearities $q$.
In this regard, we refer to the work by Bereanu and Mawhin in \cite{BeMa-08} for the equation
\begin{equation}\label{eq-bm}
(\phi(u'))' +q(u)=s+e(t),
\end{equation}
with $\int_{0}^{T}e=0$ and $q$ satisfying $q(u)>0$ for all $u$, along with
\begin{equation*}
\lim_{|u|\to+\infty}q(u)=0.
\end{equation*}
Indeed, in \cite{BeMa-08} the Authors, extending a previous work by Ward in \cite{Wa-02}, proved the existence of a value $s_{0}\in\mathbb{R}$ such that the $T$-periodic problem associated with \eqref{eq-bm} satisfies
\begin{quote}
\textsc{Alternative by Bereanu--Mawhin (BM)}: \textit{there exist zero, at least one or at least two solutions, provided that $s>s_{0}$, $s=s_{0}$ or $0<s<s_{0}$.}
\end{quote}
Moreover, they proved that \textit{there are no solutions also for $s<0$.} The same conclusion in (BM) was obtained in \cite{Be-08} for the $p$-Laplacian Li\'{e}nard equation
\begin{equation*}
(\phi_{p}(u'))'+f(u)u'+q(u)=s+e(t),
\end{equation*}
where $\phi_{p}(\xi) := |\xi|^{p-2}\xi$, with $p>1$.

At this point, we observe that such a kind of results suggests the fact that an Ambrosetti--Prodi type alternative of the form \textit{zero}, \textit{at least one} or \textit{at least two solutions}, may occur for a broad class of nonlinearities which reflect the behavior of parabola/bell-shaped functions.
To be more precise, we consider nonlinearities $g(t,u)$ which include, as special cases, functions of the form
\begin{equation}\label{eq-g}
g(t,u)=a(t)q(u)-e(t),
\end{equation}
with $q$ having the following behavior:
\begin{quote}
\textsc{Nonlinearity of type~$\rm{I}$.} There exist $\omega_{\pm}:=\lim_{u\to\pm\infty}q(u)$ with $q(u)<\omega_{\pm}$ for $u$ in a neighborhood of $\pm\infty$.\par\noindent
\textsc{Nonlinearity of type~$\rm{II}$.} There exist $\omega_{\pm}:=\lim_{u\to\pm\infty}q(u)$ with $q(u)>\omega_{\pm}$ for $u$ in a neighborhood of $\pm\infty$.
\end{quote}
We notice that if $\omega_{\pm}=+\infty$ then $q$ is a nonlinearity of type~$\rm{I}$, while, if $\omega_{\pm}=-\infty$ then $q$ is a nonlinearity of type~$\rm{II}$.

Moreover, when $g$ has the form as in \eqref{eq-g}, we allow the weight $a(t)$ to be non-negative but possibly vanishing on sets of positive measure, so that the uniform condition $\min a>0$ is no longer required. As a consequence, we deal with situations involving non-uniform conditions on $g(t,u)$ as $u\to\pm\infty$.

From now on, we focus our attention on the periodic boundary value problem associated with $(\mathscr{E}_{s})$
where $\phi\colon\mathbb{R}\to\phi(\mathbb{R})=\mathbb{R}$ is an increasing homeomorphism such
that $\phi(0)=0$, the map $f\colon\mathbb{R}\to\mathbb{R}$ is continuous, and the
function $g\colon\mathopen{[}0,T\mathclose{]}\times\mathbb{R}\to\mathbb{R}$
satisfies the Carath\'{e}odory conditions (cf.~\cite[p.~28]{Ha-80}).
By a \textit{solution} to $(\mathscr{E}_{s})$ we mean a function $u\colon\mathopen{[}0,T\mathclose{]}\to\mathbb{R}$ of class $\mathcal{C}^{1}$
such that $\phi(u')$ is an absolutely continuous function and the equation $(\mathscr{E}_{s})$ is satisfied for almost every $t$.
Moreover, when $u(0)=u(T)$ and $u'(0)=u'(T)$, we say that $u$ is a \textit{$T$-periodic solution}.
One could equivalently consider the function $g(t,u)$ defined for a.e.~$t\in\mathbb{R}$ and $T$-periodic in $t$. In this case, one
looks for solutions $u\colon\mathbb{R}\to\mathbb{R}$ which are $T$-periodic and satisfy $(\mathscr{E}_{s})$, as described above.

It is worth noting that equation $(\mathscr{E}_{s})$ concerns the \textit{$\phi$-Laplacian operator}
which includes all the qualitative properties of the classical \textit{$p$-Laplacian operator} $\phi_{p}$
or even some more general differential operators, such as the \textit{$(p,q)$-Laplacian operator}
defined as $\phi_{p,q}(\xi):=(|\xi|^{p-2} +|\xi|^{q-2})\xi$, with $1 < p < q$.
Such kinds of differential operators are extensively studied in the literature
for their relevance in many physical and mechanical models
(cf.~\cite{KRV-10,MaMa-98,PuSe-07}).

We present now some new results concerning the $T$-periodic BVP associated with equation $(\mathscr{W\!E}_{s})$. In this introductory summary, for sake of convenience, we assume that $a,e\in L^{\infty}(0,T)$ are such that $a(t)\geq0$ for a.e.~$t\in\mathopen{[}0,T\mathclose{]}$ with $\bar{a}:=\tfrac{1}{T}\int_{0}^{T}a(t)\,dt=1$ and $\bar{e}:=\tfrac{1}{T}\int_{0}^{T}e(t)\,dt=0$.

\begin{theorem}\label{th-0.1}
Assume that $\omega_{\pm}=+\infty$. Then, there exists $s_{0}\in \mathbb{R}$ such that:
\begin{itemize}
\item for $s<s_{0}$, equation $(\mathscr{W\!E}_{s})$ has no $T$-periodic solutions;
\item for $s =s_{0}$, equation $(\mathscr{W\!E}_{s})$ has at least one $T$-periodic solution;
\item for $s_{0}<s$, equation $(\mathscr{W\!E}_{s})$ has at least two $T$-periodic solutions.
\end{itemize}
\end{theorem}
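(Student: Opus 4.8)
The plan is to run the classical Ambrosetti--Prodi scheme — \emph{a priori} bounds together with topological degree — in the setting of the periodic $\phi$-Laplacian Li\'enard equation.

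\emph{Functional setting and a priori bounds.} I would first rewrite the $T$-periodic problem for $(\mathscr{W\!E}_{s})$ as a fixed point equation $u=\Phi_{s}(u)$ in the Banach space $C^{1}_{T}$ of $T$-periodic $C^{1}$ functions, where $\Phi_{s}$ is the completely continuous operator obtained by inverting the periodic $\phi$-Laplacian (in the spirit of Man\'asevich--Mawhin): $T$-periodic solutions of $(\mathscr{W\!E}_{s})$ are exactly the fixed points of $\Phi_{s}$, the map $(s,u)\mapsto\Phi_{s}(u)$ is continuous and compact on bounded sets, and I write $d(s,r):=\deg(I-\Phi_{s},B_{r},0)$ whenever $I-\Phi_{s}$ does not vanish on $\partial B_{r}$ (balls centred at $0$). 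The key technical step is then the a priori bound: for every bounded $J\subset\mathbb{R}$ there is $R_{J}>0$ so that every $T$-periodic solution $u$ of $(\mathscr{W\!E}_{s})$ with $s\in J$ satisfies $\|u\|_{\infty}+\|u'\|_{\infty}<R_{J}$. Integrating $(\mathscr{W\!E}_{s})$ over $[0,T]$, the terms $(\phi(u'))'$, $f(u)u'=\tfrac{d}{dt}F(u)$ (with $F'=f$) and $e$ all have zero mean, so
\begin{equation*}
\int_{0}^{T}a(t)\,q(u(t))\,dt=sT.
\end{equation*}
Since $q$ is continuous and $\omega_{\pm}=+\infty$, $m:=\min_{\mathbb{R}}q$ is finite; thus $s\ge m$ is necessary and $\int_{0}^{T}a\,|q(u)|$ is controlled in terms of $s$ and $m$. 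Passing to the equivalent planar system with $v:=\phi(u')+F(u)$ (so $v'=s+e-a(t)q(u)$), this bounds $\|v'\|_{1}$, hence the oscillation of $v$ and, through $\phi^{-1}$, of $u$; and since $u$ cannot be large in modulus throughout $[0,T]$ without making $\int_{0}^{T}a\,q(u)$ exceed $sT$, one locates a point where $|u|$ is bounded, whence $\|u\|_{\infty}$, and then $\|u'\|_{\infty}$, are bounded. The delicate point — which I expect to be the main obstacle — is that $a$ may vanish on a set of positive measure, so the coercivity of $q$ can be used only where $a>0$ and must be transported to the whole period through the oscillation estimate.

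\emph{Non-existence and the threshold.} From the displayed identity with $q\ge m$, $a\ge0$, $\bar{a}=1$, one gets $sT\ge mT$; hence $(\mathscr{W\!E}_{s})$ has no $T$-periodic solution for $s<m$. Set $S:=\{s\in\mathbb{R}:(\mathscr{W\!E}_{s})\text{ is solvable}\}$, so $S\subseteq[m,+\infty)$. To see $S\ne\emptyset$ and that $S$ is upward closed, take $s_{1}\in S$ with solution $w$: for $s>s_{1}$ the left-hand side of $(\mathscr{W\!E}_{s})$ evaluated at $w$ equals $s_{1}+e<s+e$, so $w$ is a strict upper solution of $(\mathscr{W\!E}_{s})$; using $\omega_{-}=+\infty$ one then constructs a strict lower solution $\alpha$ with $\alpha<w$ — here, because $a$ may vanish, $\alpha$ cannot be a constant and its construction is itself a technical point. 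The well-ordered pair $(\alpha,w)$ gives, by the standard lower/upper-solution normalization of the degree, an open bounded set $\mathcal{O}$ with $\overline{\mathcal{O}}\subset B_{R}$, $w\notin\overline{\mathcal{O}}$, no fixed points of $\Phi_{s}$ on $\partial\mathcal{O}$, and $\deg(I-\Phi_{s},\mathcal{O},0)\ne0$; in particular $s\in S$. For $s_{1}$ large, a constant in the bowl of $q$ is already an upper solution and an ordered lower solution can still be produced, so $S\ne\emptyset$ and $s_{0}:=\inf S$ is finite.

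\emph{The trichotomy.} For $s<s_{0}$ there is no solution, by definition of $s_{0}$. For $s=s_{0}$, pick $s_{n}\downarrow s_{0}$ in $S$ with solutions $u_{n}$; by the a priori bound and the equation a subsequence converges in $C^{1}_{T}$ to a solution of $(\mathscr{W\!E}_{s_{0}})$, so $s_{0}\in S$. Finally, for $s>s_{0}$ choose $s_{1}\in S$ with $s_{0}\le s_{1}<s$, the corresponding solution $w$, and $R>R_{[m-1,\,s]}$ so large that $w\in B_{R}$ and $\overline{\mathcal{O}}\subset B_{R}$ for the set $\mathcal{O}$ above; deforming the parameter from $s$ down to $m-1$ (admissible since, by the a priori bound, no solution lies on $\partial B_{R}$, and there is no solution at all at $m-1<m$) yields $d(s,R)=0$, while $\deg(I-\Phi_{s},\mathcal{O},0)\ne0$ from the previous step. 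By additivity of the degree, $\deg(I-\Phi_{s},B_{R}\setminus\overline{\mathcal{O}},0)=d(s,R)-\deg(I-\Phi_{s},\mathcal{O},0)\ne0$, which produces a $T$-periodic solution of $(\mathscr{W\!E}_{s})$ in $B_{R}\setminus\overline{\mathcal{O}}$, necessarily distinct from the one in $\mathcal{O}$. Hence at least two solutions for $s>s_{0}$, completing the alternative with threshold $s_{0}$.
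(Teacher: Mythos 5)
Your overall architecture---a priori bounds uniform in $s$ on compact parameter intervals, zero degree on a large ball obtained by sliding $s$ below the non-existence threshold, a nonzero degree on a smaller set produced by a known solution acting as strict upper solution, additivity, and a limiting argument at $s=s_{0}$---is exactly the paper's. The genuine gap is the step where, given the strict upper solution $w$, you propose to ``construct a strict lower solution $\alpha<w$'' from $\omega_{-}=+\infty$, conceding only that its construction ``is itself a technical point.'' That is precisely the step that fails in the stated generality. A strict lower solution must satisfy the \emph{pointwise} inequality $(\phi(\alpha'))'+f(\alpha)\alpha'+a(t)q(\alpha)-e(t)-s>0$ for a.e.\ $t$; on the set $E=\{t\colon a(t)=0\}$, which is allowed to have positive measure and may meet every subinterval of $\mathopen{[}0,T\mathclose{]}$ in positive measure, this reduces to $(\phi(\alpha'))'+f(\alpha)\alpha'>e(t)+s$ a.e.\ on $E$. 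No constant satisfies this, and a $T$-periodic $\alpha$ cannot in general be arranged to satisfy it either: both $(\phi(\alpha'))'$ and $f(\alpha)\alpha'$ have zero mean over a period, so the coercivity of $q$ can only help through the integral, whereas the obstruction on $E$ is pointwise. Since the non-emptiness and upward closedness of your solvability set $S$, the inner set $\mathcal{O}$, and its nonzero degree all hinge on this $\alpha$, the proof as written does not go through. The paper avoids the lower solution altogether: it replaces it by the non-uniform Villari condition at $-\infty$, i.e.\ $\tfrac{1}{T}\int_{0}^{T}\bigl[a(t)q(u(t))-e(t)\bigr]\,dt>s$ for every $u\in\mathcal{C}^{1}_{T}$ with $u\leq-d$, verified via the generalized mean value theorem ($\tfrac{1}{T}\int_{0}^{T}a\,q(u)\,dt=\bar{a}\,q(u(\tilde{t}))$ for some $\tilde{t}$), which survives the vanishing of $a$ on positive-measure sets; Theorem~\ref{th-deg} then yields $\mathrm{d}_{\mathrm{LS}}(Id-\mathcal{G},\Omega^{\rm I}(R,w,K),0)=-1$ by reducing, through the Man\'asevich--Mawhin continuation theorem, to the Brouwer degree of the averaged scalar map, whose sign at $-R$ is pinned by the Villari condition and at the upper solution by the strict inequality \eqref{eq-beta}. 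You would need either to adopt this device or to justify the existence of $\alpha$ under an extra hypothesis (e.g.\ $\min a>0$), which would weaken the theorem.

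A secondary soft spot is the oscillation estimate: bounding $\|v'\|_{L^{1}}$ for $v=\phi(u')+F(u)$ controls the oscillation of $v$, not of $u$; you cannot pass through $\phi^{-1}$ because of the $F(u)$ term, and $F$ need not be proper. The paper's Lemma~\ref{lem-max-min} instead multiplies the equation by $\max u-u$ and uses $\phi(\xi)\xi\geq b|\xi|-K_{b}$ to get $\|u'\|_{L^{1}}\leq K_{0}$, hence $\max u-\min u\leq K_{0}$, directly from the one-sided bound $a(t)q(u)-e(t)\geq-\gamma(t)$; that argument should replace your planar reduction.
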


The above theorem extends the recent result in \cite{SoZa-18} to the case of $\phi$-Laplacian operators. Actually, Theorem~\ref{th-0.1} follows from a more general result dealing with equation $(\mathscr{E}_{s})$, which extends some results in \cite{Ma-06} to locally coercive nonlinearities.

\begin{theorem}\label{th-0.2}
Assume that $\omega_{\pm}=\omega\in\mathbb{R}$ and $q(u)>\omega$ for all $|u|$ sufficiently large.
Then, there exists $s_{0}\in \mathopen{]}\omega,+\infty\mathclose{[}$ such that:
\begin{itemize}
\item for $s>s_{0}$, equation $(\mathscr{W\!E}_{s})$ has no $T$-periodic solutions;
\item for $s =s_{0}$, equation $(\mathscr{W\!E}_{s})$ has at least one $T$-periodic solution;
\item for $\omega<s<s_{0}$, equation $(\mathscr{W\!E}_{s})$ has at least two $T$-periodic solutions.
\end{itemize}
Moreover, if $q(u)>\omega$ for all $u$, then, for $s\leq\omega$, equation $(\mathscr{W\!E}_{s})$ has no $T$-periodic solutions.
\end{theorem}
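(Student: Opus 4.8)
\emph{Strategy.} The plan is to specialize to $(\mathscr{W\!E}_{s})$ the degree-theoretic scheme behind \cite{FMN-86,BeMa-08,Ma-06}: read off an integral identity constraining the admissible values of $s$, establish $\mathcal{C}^{1}$ a priori bounds on $T$-periodic solutions that are uniform as long as $s$ stays away from $\omega$, and then compute the Leray--Schauder degree of the fixed point operator of the $T$-periodic problem by reducing it, through averaging, to the scalar map $\xi\mapsto q(\xi)-s$. Write $F(u):=\int_{0}^{u}f$ and $q^{*}:=\sup_{\mathbb{R}}q$; since $q$ is continuous with $\lim_{u\to\pm\infty}q(u)=\omega$ we have $q^{*}<+\infty$, and since $q(u)>\omega$ for $|u|$ large we have $q^{*}>\omega$, so $q^{*}\in\mathopen{]}\omega,+\infty\mathclose{[}$. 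Integrating $(\mathscr{W\!E}_{s})$ over $[0,T]$ and using $T$-periodicity (so that $\int_{0}^{T}(\phi(u'))'\,dt=0$ and $\int_{0}^{T}f(u)u'\,dt=F(u(T))-F(u(0))=0$) together with $\bar{a}=1$ and $\bar{e}=0$, every $T$-periodic solution satisfies
\begin{equation*}
s=\frac{1}{T}\int_{0}^{T}a(t)\,q(u(t))\,dt .
\end{equation*}
Since $a\geq0$ and $q\leq q^{*}$ this forces $s\leq q^{*}$, so $(\mathscr{W\!E}_{s})$ has no $T$-periodic solution for $s>q^{*}$; and if $q>\omega$ on all of $\mathbb{R}$, then $q(u(t))>\omega$ pointwise and, since $a\geq0$ with $\int_{0}^{T}a=T>0$, the identity yields $s>\omega$, so $(\mathscr{W\!E}_{s})$ has no $T$-periodic solution for $s\leq\omega$ --- the last assertion. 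Set $S:=\{s:(\mathscr{W\!E}_{s})\text{ has a }T\text{-periodic solution}\}$, so that $S\subseteq\mathopen{]}-\infty,q^{*}\mathclose{]}$ and $s_{0}:=\sup S\leq q^{*}$.

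\emph{A priori bounds.} The crucial step is: for every $\varepsilon>0$ there is $\rho_{\varepsilon}>0$ such that every $T$-periodic solution of $(\mathscr{W\!E}_{s})$ with $\omega+\varepsilon\leq s\leq q^{*}+1$ satisfies $\|u\|_{\infty}+\|u'\|_{\infty}\leq\rho_{\varepsilon}$. Since $\int_{0}^{T}u'=0$ there is $t_{0}$ with $\phi(u'(t_{0}))=0$; by standard a priori estimates for $\phi$-Laplacian Li\'{e}nard periodic problems (as in \cite{BeMa-08,Be-08}), using the Li\'{e}nard variable $v:=\phi(u')+F(u)$ --- for which $v'=s+e-a\,q(u)$ --- one bounds $\|u'\|_{\infty}$ and the oscillation $\max u-\min u$ by a quantity that stays bounded as $s$ ranges over $[\omega+\varepsilon,q^{*}+1]$. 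Hence, if a sequence of solutions $u_{n}$ with $s_{n}\in[\omega+\varepsilon,q^{*}+1]$ were unbounded, it would have $\min u_{n}\to+\infty$ or $\max u_{n}\to-\infty$, so $q(u_{n}(\cdot))\to\omega$ uniformly on $[0,T]$; the identity above (with $a\geq0$, $\bar{a}=1$, and dominated convergence) would force $s_{n}\to\omega$, contradicting $s_{n}\geq\omega+\varepsilon$. Once $\|u\|_{\infty}$ is controlled so is $\|u'\|_{\infty}$. I expect this estimate --- controlling $\|u'\|_{\infty}$ and especially the oscillation, uniformly for $s$ bounded away from $\omega$, in the presence of the term $f(u)u'$, of a general homeomorphism $\phi$, and of a merely non-negative bounded weight $a$ --- to be the main obstacle; note also that $\rho_{\varepsilon}$ must blow up as $\varepsilon\downarrow0$, which is precisely what lets solutions escape to $\pm\infty$ near the threshold.

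\emph{Degree, multiplicity, and the first and third bullets.} The $T$-periodic problem for $(\mathscr{W\!E}_{s})$ is equivalent to a fixed point equation $u=\mathcal{N}_{s}u$ for a completely continuous operator $\mathcal{N}_{s}$ on the space $\mathcal{C}^{1}_{T}$ of $T$-periodic $\mathcal{C}^{1}$ functions. Fix $s\in\mathopen{]}\omega,q^{*}\mathclose{[}$ and $\varepsilon:=s-\omega$: by the previous step all fixed points of $\mathcal{N}_{s}$ lie in a fixed ball, and --- following the continuation-and-averaging argument of \cite{FMN-86,BeMa-08} --- the $T$-periodic solutions split into two disjoint classes according to the sign of $h(\xi):=q(\xi)-s$, which is negative as $\xi\to\pm\infty$ and positive on a nonempty interval (because $s<q^{*}$); on two disjoint open sets adapted to these classes, the Leray--Schauder degree of $I-\mathcal{N}_{s}$ equals the Brouwer degree of $h$ over a left interval $\mathopen{]}\alpha_{-},\beta_{-}\mathclose{[}$, which is $\tfrac{1}{2}(\mathrm{sgn}\,h(\beta_{-})-\mathrm{sgn}\,h(\alpha_{-}))=+1$, and over a right interval $\mathopen{]}\alpha_{+},\beta_{+}\mathclose{[}$, which is $\tfrac{1}{2}(\mathrm{sgn}\,h(\beta_{+})-\mathrm{sgn}\,h(\alpha_{+}))=-1$. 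Therefore $(\mathscr{W\!E}_{s})$ has at least two distinct $T$-periodic solutions for every $s\in\mathopen{]}\omega,q^{*}\mathclose{[}$, whence $\mathopen{]}\omega,q^{*}\mathclose{[}\subseteq S$ and, together with $S\subseteq\mathopen{]}-\infty,q^{*}\mathclose{]}$, we get $s_{0}=\sup S=q^{*}\in\mathopen{]}\omega,+\infty\mathclose{[}$. Combined with the non-existence for $s>q^{*}$, this yields the first bullet, and the above yields the third.

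\emph{The value $s=s_{0}$.} Choose $s_{n}\uparrow s_{0}$ with $s_{n}\in S$ and corresponding $T$-periodic solutions $u_{n}$. Since $s_{0}>\omega$, the a priori bound applies uniformly for $n$ large, so $\{u_{n}\}$ is bounded in $\mathcal{C}^{1}$; moreover $(\phi(u_{n}'))'=s_{n}+e-f(u_{n})u_{n}'-a\,q(u_{n})$ is bounded in $L^{\infty}$, so the maps $\phi(u_{n}')$ are equi-Lipschitz and, by Ascoli--Arzel\`{a} and continuity of $\phi^{-1}$, a subsequence of $u_{n}$ converges in $\mathcal{C}^{1}_{T}$ to a $T$-periodic solution of $(\mathscr{W\!E}_{s_{0}})$, which is the second bullet. (Alternatively, the whole statement is the instance $g(t,u)=a(t)q(u)-e(t)$ of the general result for $(\mathscr{E}_{s})$.)
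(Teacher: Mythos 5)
The bookkeeping parts of your argument are fine: the integral identity $s=\tfrac{1}{T}\int_{0}^{T}a(t)q(u(t))\,dt$, the resulting non-existence for $s>q^{*}:=\sup q$ and for $s\leq\omega$ when $q>\omega$ everywhere, the oscillation bound via the one-sided $L^{1}$-bound on the nonlinearity, and the Ascoli--Arzel\`{a} argument at $s=s_{0}$ all match what the paper does. The gap is in the central multiplicity step, where you claim two solutions for \emph{every} $s\in\mathopen{]}\omega,q^{*}\mathclose{[}$ and conclude $s_{0}=q^{*}$. That conclusion is false in general: take $\phi=\mathrm{id}$, $f\equiv 0$, $a\equiv 1$, $e\not\equiv 0$ with $\bar{e}=0$, and $q$ attaining its maximum at a single point $u^{*}$. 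At $s=q^{*}$ the identity forces $q(u(t))\equiv q^{*}$, hence $u\equiv u^{*}$, hence $e\equiv 0$, a contradiction; so there is no solution at $s=q^{*}$, and since the theorem (correctly) produces a solution at $s=s_{0}$, necessarily $s_{0}<q^{*}$ and there are \emph{no} solutions on $\mathopen{]}s_{0},q^{*}\mathclose{[}$, contradicting your claim. The two-solution range in the statement is $\mathopen{]}\omega,s_{0}\mathclose{[}$ with $s_{0}$ defined intrinsically as the endpoint of the admissible set of parameters, not the explicit interval $\mathopen{]}\omega,\sup q\mathclose{[}$.

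The source of the error is the degree computation on the two constant-bounded slabs. The continuation theorem of Man\'{a}sevich--Mawhin equates $\mathrm{d}_{\mathrm{LS}}$ with the Brouwer degree of $\xi\mapsto q(\xi)-s$ only if no solution of the $\lambda$-homotopy lies on $\partial\Omega$; for a slab $\{\alpha_{-}<u(t)<\beta_{-}\}$ a solution may perfectly well satisfy $\max u=\beta_{-}$, and the a priori bounds do not exclude this. Excluding it requires $\beta_{-}$ to be a strict lower solution, i.e.\ $a(t)q(\beta_{-})-e(t)-s>0$ for a.e.\ $t$ --- which fails whenever $a$ vanishes on a set of positive measure (the paper's main setting) or, even for $a\equiv 1$, whenever $s>q^{*}-\|e^{+}\|_{\infty}$. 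This is exactly the difficulty the paper is built to circumvent: it never uses constants as strict upper/lower solutions near the threshold, but instead (i) uses a solution found at a parameter level $\tilde{\sigma}$ as a \emph{non-constant} strict upper/lower solution for the equation at nearby levels $s$, which propagates existence across an interval and defines $s_{0}$ as its endpoint; (ii) obtains the initial solution far out at $\pm\infty$ from the Amann--Ambrosetti--Mancini continuum of Proposition~3.2 (condition $(H^{\bigstar}_{2})$), since for a merely non-negative $a$ no constant works; and (iii) produces the second solution not from a $(+1)$-degree slab but by subtracting the degree $-1$ on the set below/above the strict upper/lower solution from the degree $0$ computed on a large ball via a homotopy in $s$ to the non-existence region. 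Your proposal is missing these three ingredients, and without them the third bullet (and the precise location of $s_{0}$) is not established.
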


The above theorem allows to consider the situation of \cite{Be-08,BeMa-08} in a nonlocal setting.

\begin{theorem}\label{th-0.3}
Assume that $\omega_{-}=+\infty$ and $q(u)\nearrow\omega_{+}\in\mathbb{R}$ for $u\to+\infty$.
Then, there exists $s_{0}\in \mathopen{]}-\infty,\omega_{+}\mathclose{[}$ such that:
\begin{itemize}
\item for $s<s_{0}$, equation $(\mathscr{W\!E}_{s})$ has no $T$-periodic solutions;
\item for $s =s_{0}$, equation $(\mathscr{W\!E}_{s})$ has at least one $T$-periodic solution;
\item for $s_{0}<s<\omega_{+}$, equation $(\mathscr{W\!E}_{s})$ has at least two $T$-periodic solutions;
\item for $s\geq\omega_{+}$, equation $(\mathscr{W\!E}_{s})$ has at least one $T$-periodic solution.
\end{itemize}
\end{theorem}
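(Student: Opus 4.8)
\emph{Overall strategy.}
I would prove Theorem~\ref{th-0.3} by the continuation/topological-degree scheme already used for Theorem~\ref{th-0.1}, exploiting the coercivity $q(u)\to+\infty$ as $u\to-\infty$ at one end and the monotone convergence $q(u)\nearrow\omega_{+}$ at the other. Rewrite the $T$-periodic problem for $(\mathscr{W\!E}_{s})$ as a fixed-point equation $u=\mathcal{F}_{s}(u)$ in the space of $T$-periodic $\mathcal{C}^{1}$-functions, where $\mathcal{F}_{s}$ is the completely continuous operator coming from the standard inversion of the periodic $\phi$-Laplacian, and set
\[
\mathcal{S}:=\{\,s\in\mathbb{R}:(\mathscr{W\!E}_{s})\text{ is solvable}\,\},\qquad s_{0}:=\inf\mathcal{S}.
\]
The plan is: (i) $\mathcal{S}$ is nonempty, bounded below and upward closed, hence a half-line; (ii) $-\infty<s_{0}<\omega_{+}$ and $s_{0}\in\mathcal{S}$; (iii) at least two solutions for $s_{0}<s<\omega_{+}$; (iv) at least one solution for $s\ge\omega_{+}$.

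\emph{The mean identity and its consequences.}
Integrating $(\mathscr{W\!E}_{s})$ on $[0,T]$ and using periodicity ($\int_{0}^{T}(\phi(u'))'\,dt=0$), the exactness of $f(u)u'$ (whose integral is $F(u(T))-F(u(0))=0$, $F'=f$) and $\bar{e}=0$ yields $\tfrac1T\int_{0}^{T}a(t)\,q(u(t))\,dt=s$. As $q$ is continuous with $\lim_{u\to-\infty}q(u)=+\infty$ and $\lim_{u\to+\infty}q(u)=\omega_{+}\in\mathbb{R}$, it is bounded below, so every solvable $s$ satisfies $s\ge\inf_{\mathbb{R}}q$: this gives non-existence for small $s$ and $s_{0}>-\infty$. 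Moreover, since $q(u)<\omega_{+}$ for $u$ large, if a solution $u$ satisfied $\min u\ge\rho$ with $\rho$ large then $q(u(t))\ge q(\rho)$, hence $s\ge q(\rho)\to\omega_{+}$; thus for $s$ in a compact subset of $(-\infty,\omega_{+})$ the minimum of every solution is bounded above by a constant $\rho^{*}$. Upward closedness of $\mathcal{S}$ follows by the lower/upper-solution method: if $\bar s\in\mathcal{S}$ with solution $\bar u$, then $\bar u$ is a strict upper solution of $(\mathscr{W\!E}_{s})$ for all $s>\bar s$, and the coercivity of $q$ at $-\infty$ provides a lower solution $\alpha\le\bar u$ — a negative constant when $\min a>0$, and otherwise a function supported on $\operatorname{supp}a$ and patched across the set where $a$ vanishes; exhibiting one such pair with $\bar s<\omega_{+}$ also gives $s_{0}<\omega_{+}$.

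\emph{A priori bounds (the main obstacle).}
The technical core is: for every compact $K\subset(-\infty,\omega_{+})$ there is $R_{K}$ with $\|u\|_{\mathcal{C}^{1}}<R_{K}$ for all $T$-periodic solutions of $(\mathscr{W\!E}_{s})$, $s\in K$. From the mean identity there is $\tau$ with $q(u(\tau))\le s$; combining $\lim_{u\to-\infty}q(u)=+\infty$ and $q(u)<\omega_{+}$ for $u$ large with $s\le\sup K<\omega_{+}$ confines $u(\tau)$ to a fixed bounded interval. The coercivity at $-\infty$ together with the integrability of $a\,q(u)$ prevents $\min u$ from escaping to $-\infty$, and a standard oscillation estimate for Li\'enard-type equations — integrating $(\phi(u'))'$ from a zero of $u'$ (which exists since $\int_{0}^{T}u'=0$) and controlling the $\phi$-Laplacian term — bounds $\max u-\min u$; then $\|u\|_{\infty}$, and in turn $\|u'\|_{\infty}$, are controlled. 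The delicate point is that $a\ge0$ may vanish on a set of positive measure, so the restoring term disappears there and all estimates must be localized to $\operatorname{supp}a$ and transported across the degenerate intervals; this is exactly where the non-uniform-condition techniques (in the spirit of \cite{SoZa-18}, here adapted to the $\phi$-Laplacian) are needed. Given the bounds, closedness of $\mathcal{S}$, and hence $s_{0}\in\mathcal{S}$, follows by compactness along a sequence $s_{n}\downarrow s_{0}$.

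\emph{The trichotomy.}
With the a priori bounds, an admissible homotopy in $s$ from some $s_{*}<s_{0}$, where $(\mathscr{W\!E}_{s_{*}})$ has no solution, gives $\deg(I-\mathcal{F}_{s},B_{R},0)=0$ on a large ball, for every $s$ in a compact subset of $(-\infty,\omega_{+})$. For $s_{0}<s<\omega_{+}$ pick $s_{1}\in(s_{0},s)$ with a solution $u_{1}$: this $u_{1}$ is a strict upper solution of $(\mathscr{W\!E}_{s})$, and, coupled with the localization $\min u<\rho^{*}$ and the coercivity at $-\infty$, it bounds an open region $\Omega$ with $\deg(I-\mathcal{F}_{s},\Omega,0)=\pm1$; since the total degree on $B_{R}$ is $0$, additivity forces a second solution in $B_{R}\setminus\overline{\Omega}$. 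For $s=s_{0}$ the single solution is the one obtained in the previous step (through closedness of $\mathcal{S}$). Finally, for $s\ge\omega_{+}$ one has $q(u)-s<\omega_{+}-s\le0$ for all $u$ large, so the averaged nonlinearity is negative for large positive constants while it is positive for large negative ones — whereas for $s<\omega_{+}$ it was positive at both ends, which is what produced the two solutions. A standard degree computation based on this opposite sign of the averaged nonlinearity at $\pm\infty$ (a Landesman--Lazer/nonresonance argument, again localized to $\operatorname{supp}a$) then yields the ``at least one'' solution for $s\ge\omega_{+}$, completing the proof.
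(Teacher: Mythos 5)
Your overall architecture (mean--value identity, solutions at lower $s$ serving as strict upper solutions for larger $s$, a priori bounds, zero total degree on a large ball versus degree $-1$ on a subregion, additivity/excision) is the one the paper actually uses, but two steps as written do not go through. First, the lower solution ``patched across the set where $a$ vanishes'' does not exist in general: a lower solution $\alpha$ must satisfy $(\phi(\alpha'))'+f(\alpha)\alpha'+a(t)q(\alpha(t))\ge e(t)+s$ pointwise a.e., and on the set $\{a=0\}$ (which may have positive measure and need not be a union of intervals) this inequality is independent of the value of $\alpha$, so pushing $\alpha$ down buys nothing, while periodicity forces $\int_0^T(\phi(\alpha'))'\,dt=0$ and prevents you from making $(\phi(\alpha'))'$ uniformly large there. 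The paper deliberately avoids any lower solution at $-\infty$ and replaces it by the integral Villari condition $\tfrac1T\int_0^T a(t)q(u(t))\,dt=\bar a\,q(u(\tilde t))>\sigma$ for all $u\le -d$, fed into the continuation result Theorem~\ref{th-deg}; that is the correct substitute, and it is exactly what the non-uniformity of $a$ forces.

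Second, and more importantly, you never produce a solvable $\bar s<\omega_+$ to seed the existence interval, so neither $s_0<\omega_+$ nor the two-solution range is actually established; the sentence ``exhibiting one such pair with $\bar s<\omega_+$'' is where the real work is hidden. A constant upper solution $u_0$ only gives $g(t,u_0)\le\|a\|_\infty q(u_0)+\|e^-\|_\infty$, and since $q(u_0)\to\omega_+$ and $\|a\|_\infty\ge\bar a=1$, this bound need not be below $\omega_+$ unless $a\equiv 1$ and $e^-\equiv 0$. The paper's missing ingredient here is Proposition~\ref{prop-3.2}, built on the Amann--Ambrosetti--Mancini type continuum result Lemma~\ref{lem-3.2} applied to the nonlinearity truncated to $u\ge 0$, where it is bounded: for every large $d$ it yields a genuine $T$-periodic solution $u_0\ge d$ of $(\mathscr{E}_{s})$ at the level $s=g_0=\tfrac1T\int_0^T a(t)q(u_0(t))\,dt=\bar a\,q(u_0(\tilde t))<\bar a\,\omega_+=\omega_+$, which then serves as the upper solution required by $(H^{\bigstar}_{2})$ in Remark~\ref{rem-3.1}; Theorem~\ref{th-0.3} then follows from Theorem~\ref{th-4.2} (hence Theorem~\ref{th-3.1}) with $\sigma^{*}=\bar a\,\omega_-=+\infty$ and $\sigma^{**}=\bar a\,\omega_+$, the case $s\ge\omega_+$ being covered by ``at least one solution for $s_0<s<\sigma^{*}$''. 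Your Landesman--Lazer style degree computation for $s\ge\omega_+$ is fine as a stand-alone alternative for that last item, but without the two points above the core trichotomy remains unproved.
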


As far as we know, the above theorem covers some situations which are not treated in the literature from the point of view of the Ambrosetti--Prodi type alternatives.

We notice that Theorem~\ref{th-0.1} and Theorem~\ref{th-0.3} concern nonlinearities of type I,
instead Theorem~\ref{th-0.2} is about nonlinearities of type II. These theorems can be considered as a model to produce different related
results by means of symmetries or change of variables (see the foregoing sections).
Moreover, they are all consequences of a general theorem given in Section~\ref{section-3}.

\medskip

The plan of the paper is as follows. In Section~\ref{section-2} we introduce some preliminary results based on continuation theorems and topological degree tools developed by Man\'{a}sevich and Mawhin in \cite{MaMa-98}. Moreover, taking into account \cite{SoZa-18}, we adapt Villari's type conditions to our setting.
Section~\ref{section-3} is devoted to our main results for the parameter-dependent equation $(\mathscr{E}_{s})$. The key ingredient for the proofs is Theorem~\ref{th-deg} in Section~\ref{section-2},
combined with arguments inspired from \cite{BeMa-08,FMN-86,Ma-06}.
In the same section, following \cite{MRZ-00,Om-88}, we also recall a result of Amann, Ambrosetti and Mancini type on bounded nonlinearities (cf.~\cite{AAM-78}).
In Section~\ref{section-4}, we illustrate some applications of the main results achieved in Section~\ref{section-3} to the weighted Li\'{e}nard equation $(\mathscr{W\!E}_{s})$ and Neumann problems for radially symmetric solutions.

\section{Preliminary results}\label{section-2}

In this section we deal with the differential equation
\begin{equation}\label{eq-2.1}
(\phi(u'))'+f(u)u'+h(t,u)=0,
\end{equation}
where $\phi\colon\mathbb{R}\to\mathbb{R}$ is an increasing homeomorphism with $\phi(0)=0$, $f\colon\mathbb{R}\to\mathbb{R}$ is a continuous function and $h\colon\mathopen{[}0,T\mathclose{]}\times\mathbb{R}\to\mathbb{R}$ is a Carath\'eodory function. We denote by
$\mathcal{C}^{1}_{T}:=\{u\in\mathcal{C}^{1}(\mathopen{[}0,T\mathclose{]})\colon u(0)=u(T), \, u'(0)=u'(T)\}$, and by $\mathfrak{AC}$ the set of absolutely continuous functions. By a $T$-periodic solution to \eqref{eq-2.1} we mean a function
\begin{equation*}
u\in \mathfrak{D} := \bigl{\{}u\in \mathcal{C}^{1}_{T}\colon \phi(u')\in \mathfrak{AC}\bigr{\}},
\end{equation*}
satisfying equation \eqref{eq-2.1} for a.e.~$t$.
Our purpose is to introduce the main tools for the discussion in the subsequent section.

\subsection{Definitions and technical lemmas}\label{section-2.1}

We start by introducing two concepts: the Villari's type conditions, which are inspired by \cite{Vi-66} (cf.~also \cite{BeMa-07, MaMa-98, MaSz-16}), and the upper/lower solutions (cf.~\cite{Ma-06}).

\begin{definition}\label{def-1}
A Carath\'eodory function $h(t,u)$ satisfies the Villari's condition at $+\infty$ (at $-\infty$, respectively) if there exists $\delta=\pm1$ and $d_{0} > 0$ such that
\begin{equation}\label{eq-villari}
\delta \int_{0}^{T} h(t,u(t))\,dt > 0
\end{equation}
for each $u\in \mathcal{C}^{1}_{T}$ such that $u(t)\geq d_{0}$ ($u(t)\leq -d_{0}$, respectively) for every $t\in \mathopen{[}0,T\mathclose{]}$.
\end{definition}

\begin{definition}\label{def-2}
Let $\alpha,\beta\in \mathfrak{D}$. We say that $\alpha$ is a \textit{strict lower solution} to \eqref{eq-2.1}, if
\begin{equation}\label{eq-alpha}
(\phi(\alpha'(t)))'+ f(\alpha(t))\alpha'(t) + h(t,\alpha(t)) > 0, \quad \text{for a.e.~$t\in\mathopen{[}0,T\mathclose{]}$},
\end{equation}
and if $u$ is any $T$-periodic solution to \eqref{eq-2.1} with $u(t) \geq \alpha(t)$ for all $t\in\mathopen{[}0,T\mathclose{]}$, then $u(t) > \alpha(t)$ for all $t\in\mathopen{[}0,T\mathclose{]}$. We say that $\beta$ is a \textit{strict upper solution} to \eqref{eq-2.1}, if
\begin{equation}\label{eq-beta}
(\phi(\beta'(t)))'+ f(\beta(t))\beta'(t) + h(t,\beta(t)) < 0, \quad \text{for a.e.~$t\in\mathopen{[}0,T\mathclose{]}$},
\end{equation}
and if $u$ is any $T$-periodic solution to \eqref{eq-2.1} with $u(t) \leq \beta(t)$ for all $t\in\mathopen{[}0,T\mathclose{]}$, then $u(t) < \beta(t)$ for all $t\in\mathopen{[}0,T\mathclose{]}$.
\end{definition}

Following \cite[Chapter~3, Proposition~1.5]{DH-06},
we present now a useful criterion that guarantees when a function $\alpha$ satisfying \eqref{eq-alpha} is a strict lower solution.

\begin{lemma}\label{lem-strictlower}
Let $h\colon \mathopen{[}0,T\mathclose{]}\times \mathbb{R} \to \mathbb{R}$ be a Carath\'eodory function satisfying
\begin{itemize}
\item [$(A_{0})$] \textit{for all $t_{0}\in \mathopen{[}0,T\mathclose{]}$, $u_{0}\in \mathbb{R}$ and $\varepsilon > 0$, there exists $\delta > 0$
such that if $|t-t_{0}| < \delta$, $|u - u_{0}| < \delta$, then $|h(t,u) - h(t,u_{0})| < \varepsilon$ for a.e.~$t$.}
\end{itemize}
Let $a>0$ and $\alpha\in \mathfrak{D}$ be such that
\begin{equation*}
(\phi(\alpha'(t)))'+ f(\alpha(t))\alpha'(t) + h(t,\alpha(t))\geq a, \quad \text{for a.e.~$t\in\mathopen{[}0,T\mathclose{]}$},
\end{equation*}
then $\alpha$ is a strict lower solution to \eqref{eq-2.1}.
\end{lemma}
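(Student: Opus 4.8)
The plan is to observe first that the differential inequality \eqref{eq-alpha} required in Definition~\ref{def-2} is immediate, since by hypothesis $(\phi(\alpha'(t)))' + f(\alpha(t))\alpha'(t) + h(t,\alpha(t)) \ge a > 0$ for a.e.\ $t$; so the whole content is the strict separation property. Accordingly, let $u\in\mathfrak{D}$ be a $T$-periodic solution of \eqref{eq-2.1} with $u(t)\ge\alpha(t)$ for all $t\in[0,T]$, set $v:=u-\alpha\ge0$, and argue by contradiction assuming $v(t^{*})=0$ for some $t^{*}$. Extending $u$ and $\alpha$ $T$-periodically to $\mathbb{R}$ (which is legitimate since $u,\alpha\in\mathcal{C}^{1}_{T}$ and $\phi(u'),\phi(\alpha')\in\mathfrak{AC}$), the function $v$ is $\mathcal{C}^{1}$, nonnegative on $\mathbb{R}$, and attains its global minimum $0$ at $t^{*}$; hence $v'(t^{*})=0$, that is $u'(t^{*})=\alpha'(t^{*})$.

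Next I would introduce the auxiliary function $w(t):=\phi(u'(t))-\phi(\alpha'(t))$, which is absolutely continuous because $\phi(u'),\phi(\alpha')\in\mathfrak{AC}$, and which satisfies $w(t^{*})=0$. Using the equation $(\phi(u'))' = -f(u)u'-h(t,u)$ together with $(\phi(\alpha'))' \ge a - f(\alpha)\alpha' - h(t,\alpha)$, one gets, for a.e.\ $t$,
\[
w'(t) \le -a + \bigl[f(\alpha(t))\alpha'(t)-f(u(t))u'(t)\bigr] + \bigl[h(t,\alpha(t))-h(t,u(t))\bigr].
\]
The key point is that both bracketed terms are small in a neighbourhood of $t^{*}$. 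For the first one this follows from continuity of $f$, $\alpha$, $\alpha'$, $u$, $u'$ and from $u'(t^{*})=\alpha'(t^{*})$, so it tends to $0$ as $t\to t^{*}$. For the second one both $\alpha(t)$ and $u(t)$ are close to $\alpha(t^{*})$ when $t$ is close to $t^{*}$, so applying hypothesis $(A_{0})$ with $t_{0}=t^{*}$, $u_{0}=\alpha(t^{*})$ and $\varepsilon=a/4$ (and shrinking the neighbourhood so that $|\alpha(t)-u_{0}|$ and $|u(t)-u_{0}|$ are small) yields $|h(t,\alpha(t))-h(t,u(t))|<a/2$ for a.e.\ $t$ near $t^{*}$. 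Choosing the neighbourhood $(t^{*}-\rho,t^{*}+\rho)$ small enough that the first bracket is also bounded by $a/4$ in modulus, we obtain $w'(t)\le -a/4<0$ for a.e.\ $t\in(t^{*}-\rho,t^{*}+\rho)$, hence $w$ is strictly decreasing on that interval.

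Finally, since $w(t^{*})=0$ and $w$ is strictly decreasing near $t^{*}$, we have $w(t)>0$ for $t\in(t^{*}-\rho,t^{*})$; as $\phi$ is strictly increasing this gives $u'(t)>\alpha'(t)$, i.e.\ $v'(t)>0$, on that interval, so $v$ is strictly increasing there and $v(t)<v(t^{*})=0$ for $t\in(t^{*}-\rho,t^{*})$, contradicting $v\ge0$. This contradiction proves that $u(t)>\alpha(t)$ for all $t$, so $\alpha$ is a strict lower solution. The only genuinely delicate step is the control of the Carath\'eodory nonlinearity near the contact point: without the modulus-of-continuity assumption $(A_{0})$ one cannot bound $h(t,\alpha(t))-h(t,u(t))$ pointwise, and this is precisely where $(A_{0})$ is used; the remaining ingredients (the periodic extension making $t^{*}$ an interior minimum, and the fact that an absolutely continuous function with a.e.\ negative derivative is strictly decreasing) are routine.
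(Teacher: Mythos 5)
Your proof is correct and follows essentially the same route as the paper's: at a contact point of $u\geq\alpha$ the derivatives agree, condition $(A_{0})$ together with continuity of $f$, $u'$, $\alpha'$ controls the perturbation terms by a fraction of $a$, and integrating the resulting sign of $(\phi(u'))'-(\phi(\alpha'))'$ near the contact point forces $u-\alpha$ to become negative, a contradiction. The only cosmetic difference is that the paper organizes this around a maximal interval where $u>\alpha$ and integrates forward from its left endpoint, while you work directly at a global minimum of $v=u-\alpha$ and look to its left; both versions share the same (standard, and equally glossed-over) measure-theoretic subtlety in applying $(A_{0})$ along the curve $u=\alpha(t)$.
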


\begin{proof}
Let $u$ be a $T$-periodic solution to \eqref{eq-2.1} with $u(t)\geq\alpha(t)$ for every $t\in\mathopen{[}0,T\mathclose{]}$. Since $\alpha$ is not a solution to equation \eqref{eq-2.1}, there exists $t_{0}\in\mathopen{[}0,T\mathclose{]}$ such that $u(t_{0})>\alpha(t_{0})$. Suppose by contradiction that there exists a maximal interval $\mathopen{[}t_{1},t_{2}\mathclose{]}$ in $\mathopen{[}0,T\mathclose{]}$ containing $t_{0}$ such that $u(t)>\alpha(t)$ for all $t\in\mathopen{]}t_{1},t_{2}\mathclose{[}$ with $u(t_{1})=\alpha(t_{1})$ or $u(t_{2})=\alpha(t_{2})$.
Since $u(t)-\alpha(t)\geq0$ for all $t\in\mathopen{[}0,T\mathclose{]}$, then $u'(t_{1})=\alpha'(t_{1})$ or $u'(t_{2})=\alpha'(t_{2})$.
First, let us suppose that $u(t_{1})=\alpha(t_{1})$. In this manner, for a.e.~$t\in\mathopen{[}0,T\mathclose{]}$, we have
\begin{equation*}
\begin{aligned}
&(\phi(u'(t)))'-(\phi(\alpha'(t)))' \leq
\\&\leq - a-f(u(t))u'(t)- h(t,u(t))+f(\alpha(t))\alpha'(t)+ h(t,\alpha(t)).
\end{aligned}
\end{equation*}
From condition $(A_{0})$, for $\varepsilon=a/4$, there exists $\delta>0$ such that, if $|t-t_{1}| < \delta$, $|u - u(t_{1})| < \delta$, then
$|h(t,u) - h(t,u(t_{1}))| < a/4$ for a.e.~$t$. Furthermore, by continuity, let $\eta<\delta$ be such that $|u(t)-u(t_{1})|<\delta$,
$|\alpha(t)-\alpha(t_{1})|<\delta$
and $|f(u(t))u'(t)-f(\alpha(t))\alpha'(t)|<a/2$, for all $t\in\mathopen{[}t_{1},t_{1}+\eta\mathclose{]}$. Then, we have $(\phi(u'(t)))'-(\phi(\alpha'(t)))' \leq0$ and,
by an integration on $\mathopen{[}t_{1},t\mathclose{]}\subseteq\mathopen{[}t_{1},t_{1}+\eta\mathclose{]}$,
we obtain $\phi(u'(t))-\phi(\alpha'(t)) \leq 0$ for a.e.~$t\in \mathopen{[}t_{1},t_{1}+\eta\mathclose{]}$.
It follows that $u'(t)\leq\alpha'(t)$ and so $u(t)\leq\alpha(t)$, for all $t\in\mathopen{[}t_{1},t_{1}+\eta\mathclose{]}$.
Then a contradiction with the definition of the interval $\mathopen{[}t_{1},t_{2}\mathclose{]}$ occurs.
Lastly, if $u(t_{2})=\alpha(t_{2})$, then a contradiction is reached in the same way.
Finally, $\alpha$ is a strict lower solution to \eqref{eq-2.1}.
\end{proof}

A similar result holds for strict upper solutions.

\begin{lemma}\label{lem-strictupper}
Let $h\colon \mathopen{[}0,T\mathclose{]}\times \mathbb{R} \to \mathbb{R}$ be a Carath\'eodory function satisfying condition $(A_{0})$.
Let $b>0$ and $\beta\in \mathfrak{D}$ be such that
\begin{equation*}
(\phi(\beta'(t)))'+ f(\beta(t))\beta'(t) + h(t,\beta(t))\leq-b, \quad \text{for a.e.~$t\in\mathopen{[}0,T\mathclose{]}$},
\end{equation*}
then $\beta$ is a strict upper solution to \eqref{eq-2.1}.
\end{lemma}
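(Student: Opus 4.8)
The plan is to deduce the lemma from Lemma~\ref{lem-strictlower} by means of the reflection $u\mapsto -u$, which interchanges upper and lower solutions. To this end I would introduce the auxiliary data $\tilde\phi(\xi):=-\phi(-\xi)$, $\tilde f(v):=f(-v)$ and $\tilde h(t,v):=-h(t,-v)$, and observe that $\tilde\phi$ is again an increasing homeomorphism of $\mathbb{R}$ with $\tilde\phi(0)=0$, that $\tilde f$ is continuous, and that $\tilde h$ is a Carath\'eodory function satisfying $(A_{0})$ whenever $h$ does; the only thing to check here is that the modulus-of-continuity estimate in $(A_{0})$ is invariant under the change $u\mapsto -u$ together with a global sign change, which is immediate. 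The purpose of these definitions is that a function $u$ is a $T$-periodic solution of \eqref{eq-2.1} if and only if $v:=-u$ is a $T$-periodic solution of the \emph{reflected equation}
\begin{equation*}
(\tilde\phi(v'))' + \tilde f(v)\,v' + \tilde h(t,v) = 0,
\end{equation*}
as one sees by substituting $u=-v$ into \eqref{eq-2.1} and multiplying through by $-1$; note that $\mathfrak{D}$ is preserved, since $\phi(u')\in\mathfrak{AC}$ if and only if $\tilde\phi(v')=-\phi(u')\in\mathfrak{AC}$.

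Next I would transport the hypothesis on $\beta$. Writing $\alpha:=-\beta\in\mathfrak{D}$ and performing the same substitution in the differential inequality $(\phi(\beta'(t)))'+f(\beta(t))\beta'(t)+h(t,\beta(t))\leq -b$, one obtains
\begin{equation*}
(\tilde\phi(\alpha'(t)))' + \tilde f(\alpha(t))\,\alpha'(t) + \tilde h(t,\alpha(t)) \geq b, \qquad \text{for a.e.\ } t\in\mathopen{[}0,T\mathclose{]}.
\end{equation*}
Hence Lemma~\ref{lem-strictlower}, applied to the reflected equation with the constant $a:=b>0$, yields that $\alpha=-\beta$ is a strict lower solution of the reflected equation.

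Finally I would read the conclusion back through the reflection. The strict differential inequality satisfied by $\alpha$ for the reflected equation is exactly \eqref{eq-beta} for $\beta$. Moreover, if $u$ is any $T$-periodic solution of \eqref{eq-2.1} with $u(t)\leq\beta(t)$ for all $t\in\mathopen{[}0,T\mathclose{]}$, then $v:=-u$ is a $T$-periodic solution of the reflected equation with $v(t)\geq\alpha(t)$ for all $t$, whence $v(t)>\alpha(t)$, i.e.\ $u(t)<\beta(t)$, for all $t\in\mathopen{[}0,T\mathclose{]}$, by the strict-lower-solution property of $\alpha$. This is precisely the definition of a strict upper solution, so $\beta$ is a strict upper solution to \eqref{eq-2.1}. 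The argument is essentially bookkeeping of signs, and its only substantive ingredient, Lemma~\ref{lem-strictlower}, has already been established, so no serious obstacle remains. (Alternatively, one may repeat the proof of Lemma~\ref{lem-strictlower} verbatim with the inequalities reversed: along a maximal interval $\mathopen{[}t_{1},t_{2}\mathclose{]}$ where $u<\beta$ in the interior and $u(t_{1})=\beta(t_{1})$, conditions $(A_{0})$ and continuity force $(\phi(u'(t)))'-(\phi(\beta'(t)))'\geq 0$ on a right neighbourhood of $t_{1}$, hence $u\geq\beta$ there, contradicting maximality; and symmetrically when $u(t_{2})=\beta(t_{2})$.)
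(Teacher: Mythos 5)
Your proof is correct. The paper itself gives no argument for this lemma (it only says ``a similar result holds''), implicitly meaning the verbatim repetition of the proof of Lemma~\ref{lem-strictlower} with all inequalities reversed --- which you also sketch as an alternative. Your primary route, the reflection $u\mapsto -u$ with $\tilde\phi(\xi)=-\phi(-\xi)$, $\tilde f(v)=f(-v)$, $\tilde h(t,v)=-h(t,-v)$, is exactly the device the paper uses elsewhere (in the proof of Lemma~\ref{lem-max-min-2} and of Theorem~\ref{th-3.2}), and your sign bookkeeping is right: $\tilde\phi$ is an increasing homeomorphism vanishing at $0$, $(A_{0})$ transfers, $\mathfrak{D}$ and $T$-periodic solutions correspond under $v=-u$, the hypothesis on $\beta$ becomes the hypothesis of Lemma~\ref{lem-strictlower} for $\alpha=-\beta$ with $a=b$, and the strict-lower-solution conclusion translates back into the definition of a strict upper solution. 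What the reflection buys is that nothing need be re-proved; what the direct symmetric argument buys is independence from the lower-solution lemma. Either is acceptable here.
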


Our approach is based on continuation theorems, hence we focus our attention on the parameter depended equation
\begin{equation}\label{eq-2.1lambda}
(\phi(u'))'+\lambda f(u)u'+ \lambda h(t,u)=0,
\end{equation}
with $\lambda\in\mathopen{]}0,1\mathclose{]}$.
In particular, the detection of some a priori bounds for solutions to \eqref{eq-2.1lambda} leads to the following.

\begin{lemma}\label{lem-2.3}
Let $h\colon \mathopen{[}0,T\mathclose{]}\times \mathbb{R} \to \mathbb{R}$ be a Carath\'eodory function. If there exists $d_{1} > 0$ such that
\begin{equation}\label{eq-h}
\int_{0}^{T} h(t,u(t))\,dt \neq 0
\end{equation}
for each $u\in \mathcal{C}^{1}_{T}$ such that $u(t)\geq d_{1}$ for every $t\in \mathopen{[}0,T\mathclose{]}$, then any $T$-periodic solution $u$ of \eqref{eq-2.1lambda} with $\lambda\in\mathopen{]}0,1\mathclose{]}$ satisfies $\min u < d_{1}$. If there exists $d_{2} > 0$ such that
\eqref{eq-h} holds for each $u\in \mathcal{C}^{1}_{T}$ such that $u(t)\leq - d_{2}$ for every $t\in \mathopen{[}0,T\mathclose{]}$, then any $T$-periodic solution $u$ of \eqref{eq-2.1lambda} with $\lambda\in\mathopen{]}0,1\mathclose{]}$ satisfies $\max u > -d_{2}$.
\end{lemma}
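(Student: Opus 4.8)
The plan is to integrate equation \eqref{eq-2.1lambda} over one period and use the periodicity of the solution to annihilate every term except the integral of $h$. Let $u\in\mathfrak{D}$ be a $T$-periodic solution of \eqref{eq-2.1lambda} for some $\lambda\in\mathopen{]}0,1\mathclose{]}$, so that $(\phi(u'(t)))'+\lambda f(u(t))u'(t)+\lambda h(t,u(t))=0$ for a.e.~$t$. Integrating on $\mathopen{[}0,T\mathclose{]}$: the term $(\phi(u'))'$ contributes $\phi(u'(T))-\phi(u'(0))=0$, since $\phi(u')\in\mathfrak{AC}$ and $u'(0)=u'(T)$; the Li\'enard term contributes $\lambda\bigl(F(u(T))-F(u(0))\bigr)=0$, where $F(\xi):=\int_{0}^{\xi}f(\sigma)\,d\sigma$, because $t\mapsto F(u(t))$ is $\mathcal{C}^{1}$ with derivative $f(u(t))u'(t)$ and $u(0)=u(T)$. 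Hence $\lambda\int_{0}^{T}h(t,u(t))\,dt=0$, and since $\lambda>0$ we conclude $\int_{0}^{T}h(t,u(t))\,dt=0$.

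Next I would argue by contradiction in each of the two cases. For the first assertion, suppose $\min_{\mathopen{[}0,T\mathclose{]}}u\geq d_{1}$. Then $u\in\mathcal{C}^{1}_{T}$ and $u(t)\geq d_{1}$ for every $t\in\mathopen{[}0,T\mathclose{]}$, so hypothesis \eqref{eq-h} gives $\int_{0}^{T}h(t,u(t))\,dt\neq 0$, contradicting the identity obtained above. Therefore $\min u<d_{1}$. For the second assertion, the very same integration again yields $\int_{0}^{T}h(t,u(t))\,dt=0$ for any $T$-periodic solution of \eqref{eq-2.1lambda}; if $\max_{\mathopen{[}0,T\mathclose{]}}u\leq -d_{2}$, then $u(t)\leq -d_{2}$ for all $t$, and the corresponding instance of \eqref{eq-h} forces $\int_{0}^{T}h(t,u(t))\,dt\neq 0$, again a contradiction. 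Hence $\max u>-d_{2}$.

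The only point requiring a word of care is the vanishing of the integrals of $(\phi(u'))'$ and of the Li\'enard term $\lambda f(u)u'$; both follow at once from $u\in\mathfrak{D}$ and the periodic boundary conditions, and they are precisely the reason the assumption is imposed on the averaged nonlinearity rather than on $h$ pointwise. No a priori bound or compactness argument is needed for this lemma — it is a pure averaging observation — so I do not expect a genuine obstacle here; the substance lies in how it will later be combined with the Villari-type conditions of Definition~\ref{def-1} to produce the actual a priori estimates.
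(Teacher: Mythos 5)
Your proposal is correct and follows exactly the paper's argument: integrate the equation over a period, use periodicity to eliminate the $(\phi(u'))'$ and $f(u)u'$ terms, and derive a contradiction with \eqref{eq-h}. Your write-up merely makes explicit (via the primitive $F$ of $f$) the vanishing of the Li\'enard term, which the paper states without comment.
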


\begin{proof}
Let $u$ be a $T$- periodic solution to \eqref{eq-2.1lambda} with $\lambda\in\mathopen{]}0,1\mathclose{]}$. By integrating, we have
\begin{equation*}
0=\int_{0}^{T} \bigl{[}(\phi(u'(t)))'+\lambda f(u(t))u'(t)+ \lambda h(t,u(t))\bigr{]}\,dt=\lambda\int_{0}^{T}h(t,u(t))\,dt.
\end{equation*}
Suppose by contradiction that either $u(t)\geq d_{1}$ or $u(t)\leq -d_{2}$ for all $t\in\mathopen{[}0,T\mathclose{]}$,
then a contradiction follows with respect to \eqref{eq-h}.
\end{proof}

\begin{lemma}\label{lem-max-min}
Let $h\colon \mathopen{[}0,T\mathclose{]}\times \mathbb{R} \to \mathbb{R}$ be a Carath\'eodory function satisfying the following property:
\begin{itemize}
\item [$(A^{\rm{I}}_{1})$] {there exists $\gamma\in L^{1}(\mathopen{[}0,T\mathclose{]},\mathbb{R}^{+})$ such that $h(t,u)\geq-\gamma(t)$ for a.e.~$t\in\mathopen{[}0,T\mathclose{]}$ and for all $u\in\mathbb{R}$.}
\end{itemize}
Then, there exists a constant $K_{0}=K_{0}(\gamma)$ such that any $T$-periodic solution $u$ to \eqref{eq-2.1lambda} with $\lambda\in\mathopen{]}0,1\mathclose{]}$ satisfies
\begin{equation*}
\max u - \min u \leq K_{0} \quad \text{ and } \quad \|u'\|_{L^{1}}\leq K_{0}.
\end{equation*}
Moreover, for any $\ell_{1},\ell_{2}\in\mathbb{R}$ with $\ell_{1}<\ell_{2}$ there exists $K_{1}>0$ such that any $T$-periodic solution $u$ to \eqref{eq-2.1lambda} with $\lambda\in\mathopen{]}0,1\mathclose{]}$ such that $\ell_{1}\leq u(t)\leq \ell_{2}$ for all $t\in\mathopen{[}0,T\mathclose{]}$ satisfies $\|u'\|_{\infty}\leq K_{1}$.
\end{lemma}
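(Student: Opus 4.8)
The plan is to combine an integrated form of the equation, which turns the one–sided bound $(A^{\rm{I}}_{1})$ into an $L^{1}$ bound on $t\mapsto h(t,u(t))$, with a Li\'enard–type change of variables. Write $F(\xi):=\int_{0}^{\xi}f(\sigma)\,d\sigma$. Integrating \eqref{eq-2.1lambda} over $\mathopen{[}0,T\mathclose{]}$ and using that $u\in\mathcal{C}^{1}_{T}$ gives $\int_{0}^{T}(\phi(u'))'\,dt=0$ and $\int_{0}^{T}f(u)u'\,dt=F(u(T))-F(u(0))=0$, hence $\int_{0}^{T}h(t,u(t))\,dt=0$. Since $h(t,u(t))+\gamma(t)\geq0$ for a.e.\ $t$, the positive and negative parts of $h(t,u(t))$ have the same integral, which is at most $\|\gamma\|_{L^{1}}$, so $\int_{0}^{T}|h(t,u(t))|\,dt\leq 2\|\gamma\|_{L^{1}}$.

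Next set $w(t):=\phi(u'(t))+\lambda F(u(t))$. Then $w\in\mathfrak{AC}$ is $T$-periodic and, by \eqref{eq-2.1lambda}, $w'(t)=-\lambda h(t,u(t))$ for a.e.\ $t$; therefore the oscillation of $w$ over $\mathopen{[}0,T\mathclose{]}$ is at most $\lambda\int_{0}^{T}|h(t,u(t))|\,dt\leq 2\|\gamma\|_{L^{1}}$, a bound independent of $\lambda\in\mathopen{]}0,1\mathclose{]}$. Because $u\in\mathcal{C}^{1}_{T}$, it attains its minimum and maximum at points where $u'$ vanishes; more generally, at every $t_{0}$ with $u'(t_{0})=0$ one has $w(t_{0})=\lambda F(u(t_{0}))$, and integrating $(\phi(u'))'$ over a maximal arc of monotonicity $\mathopen{[}t',t''\mathclose{]}$ of $u$ (so $u'(t')=u'(t'')=0$) yields $F(u(t''))-F(u(t'))=-\int_{t'}^{t''}h(s,u(s))\,ds$; summing over the arcs in one period shows that the values of $F$ at all critical points of $u$ lie in an interval of length at most $2\|\gamma\|_{L^{1}}$.

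From the identity $\phi(u'(t))=w(t)-\lambda F(u(t))$ and the near–constancy of $w$ one extracts a pointwise bound $\|\phi(u')\|_{\infty}\leq C_{0}(\gamma)$: fixing $t$ on a monotone arc with endpoints $t',t''$, comparing $w(t)$ with $w(t')$ and $w(t'')$ and using that $\phi(u'(t))$ has a fixed sign on that arc forces $\lambda F(u(t))$ to stay within $2\|\gamma\|_{L^{1}}$ of $\min\{\lambda F(u(t')),\lambda F(u(t''))\}$, while reading the same inequalities together with the arc–wise estimate of the previous step bounds $\phi(u'(t))$ on the other side; this produces $C_{0}(\gamma)$ independent of $\lambda$ and of $u$. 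Hence $\|u'\|_{\infty}\leq K:=\max\{|\phi^{-1}(-C_{0}(\gamma))|,\phi^{-1}(C_{0}(\gamma))\}$, so $\|u'\|_{L^{1}}\leq TK$ and $\max u-\min u=\bigl|\int_{t_{m}}^{t_{M}}u'\bigr|\leq\|u'\|_{L^{1}}\leq TK$; one takes $K_{0}:=TK$. For the ``moreover'' part the argument is immediate: if $\ell_{1}\leq u(t)\leq\ell_{2}$ for all $t$, then $F(u(t))$ ranges in the bounded set $F(\mathopen{[}\ell_{1},\ell_{2}\mathclose{]})$; picking any critical point $t_{0}$ of $u$ (one exists, e.g.\ at a maximum), $w(t_{0})=\lambda F(u(t_{0}))$ with $u(t_{0})\in\mathopen{[}\ell_{1},\ell_{2}\mathclose{]}$, so the oscillation bound on $w$ gives $\|\phi(u')\|_{\infty}\leq 2\|\gamma\|_{L^{1}}+2\max_{\mathopen{[}\ell_{1},\ell_{2}\mathclose{]}}|F|=:C_{1}$, whence $\|u'\|_{\infty}\leq K_{1}:=\max\{|\phi^{-1}(-C_{1})|,\phi^{-1}(C_{1})\}$.

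The hard part is the third step: upgrading the uniform oscillation bound on $w$ to a uniform sup–bound on $\phi(u')$ \emph{without} an a priori $L^{\infty}$ estimate on $u$ itself. The Li\'enard term $f(u)u'$ contributes the telescoping quantity $F(u)$, which is only controlled arc by arc, so one must argue along the monotone pieces of $u$ and exploit that the constraint $w(t_{0})=\lambda F(u(t_{0}))$ at critical points confines the admissible range of $u$ through the oscillation of $w$ (distinguishing, if needed, whether $F$ is bounded or coercive at infinity). Once $\|\phi(u')\|_{\infty}$ is bounded, the remaining estimates are routine.
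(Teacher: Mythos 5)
There is a genuine gap, and it sits exactly where you flag ``the hard part'': the claimed uniform pointwise bound $\|\phi(u')\|_{\infty}\leq C_{0}(\gamma)$, with $C_{0}$ independent of $u$ and of where $u$ lives, is false in general. Your steps 1 and 2 are fine ($\int_{0}^{T}h(t,u(t))\,dt=0$, hence $\int_{0}^{T}|h(t,u(t))|\,dt\leq 2\|\gamma\|_{L^{1}}$, and $w:=\phi(u')+\lambda F(u)$ has oscillation at most $2\lambda\|\gamma\|_{L^{1}}$). But on a monotone arc the sign information $\phi(u'(t))=w(t)-\lambda F(u(t))\geq 0$ only bounds $F(u(t))$ from \emph{above} by $F(u(t'))+2\|\gamma\|_{L^{1}}$; to bound $\phi(u'(t))$ from above you would need a \emph{lower} bound on $F(u(t))$ for intermediate values of $u$, and none is available: $F$ may dip arbitrarily far below $F(u(t'))$ and $F(u(t''))$ inside the interval $\mathopen{[}u(t'),u(t'')\mathclose{]}$ even though its values at the endpoints (indeed at all critical points of $u$) agree up to $2\|\gamma\|_{L^{1}}$. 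Concretely, a strongly ``anti-damping'' stretch of $f$ (a deep negative bump of $F$) accelerates the solution and makes $u'$ spike; since nothing localizes the range of $u$, no bound depending only on $\gamma$ and $\phi$ can control $\|u'\|_{\infty}$. This is precisely why the lemma asserts only an $L^{1}$ bound on $u'$ in general and reserves the $L^{\infty}$ bound for solutions confined to a fixed compact interval. Since you derive $\|u'\|_{L^{1}}\leq TK$ and $\max u-\min u\leq TK$ from the false pointwise bound, the main conclusion is not established.

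The repair stays entirely within your framework but must work with an integral, not a pointwise, quantity. The paper multiplies the equation by $v:=\max u-u\geq 0$ and integrates over a period: the Li\'enard term drops out exactly ($\int_{t^{*}}^{t^{*}+T}f(u)vv'=0$), integration by parts gives $\int_{0}^{T}\phi(u')u'\leq\|\gamma\|_{L^{1}}\|v\|_{\infty}\leq\|\gamma\|_{L^{1}}\|u'\|_{L^{1}}$, and the elementary property $\phi(\xi)\xi\geq b|\xi|-K_{b}$ (valid for every $b>0$ because $\phi$ is a homeomorphism onto $\mathbb{R}$) then yields $\|u'\|_{L^{1}}\leq K_{0}(\gamma)$ upon choosing $b>\|\gamma\|_{L^{1}}$. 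Equivalently, in your notation, write $\int_{0}^{T}\phi(u')u'=\int_{0}^{T}(w-w(t^{*}))u'-\lambda\int_{0}^{T}F(u)u'$; the last integral vanishes by periodicity and the first is at most $\operatorname{osc}(w)\,\|u'\|_{L^{1}}\leq 2\|\gamma\|_{L^{1}}\|u'\|_{L^{1}}$, after which the same superlinearity argument closes the estimate. Your treatment of the ``moreover'' part, by contrast, is correct and even a little cleaner than the paper's: once $u$ is confined to $\mathopen{[}\ell_{1},\ell_{2}\mathclose{]}$, the value $w(t_{0})=\lambda F(u(t_{0}))$ at a critical point plus the oscillation bound on $w$ gives $\|\phi(u')\|_{\infty}\leq 2\max_{\mathopen{[}\ell_{1},\ell_{2}\mathclose{]}}|F|+2\|\gamma\|_{L^{1}}$ directly.
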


\begin{proof}
Let $\lambda\in\mathopen{]}0,1\mathclose{]}$ and let $u$ be a $T$-periodic solution to \eqref{eq-2.1lambda}.
Let $t^{*}$ be such that $u(t^{*}) = \max u$ and define $v(t):= \max u - u(t)$, which satisfies $v'=-u'$.
By hypothesis $(A^{\rm{I}}_{1})$, we deduce that for almost every $t\in\mathopen{[}0,T\mathclose{]}$
\begin{equation}\label{disequation}
(\phi(u'(t)))' = - \lambda f(u(t))u'(t) - \lambda h(t,u(t)) \leq \lambda f(u(t))v'(t) + \gamma(t).
\end{equation}
Up to an extension of $h(\cdot, u)$ by $T$-periodicity on $\mathbb{R}$, we notice that
\begin{equation*}
\int_{t^{*}}^{t^{*}+T} f(u(\xi))v(\xi)v'(\xi) \,d\xi =0.
\end{equation*}
Multiplying \eqref{disequation} by $v(t)\geq 0$ and integrating on $\mathopen{[}t^{*},t^{*}+T\mathclose{]}$, we obtain
\begin{equation*}
\int_{t^{*}}^{t^{*}+T} (\phi(u'(\xi)))' v(\xi) \,d\xi \leq \|\gamma\|_{L^{1}} \|v\|_{\infty}.
\end{equation*}
At this point, from the properties of $\phi$ it follows straightway that for every $b>0$ there exists $K_{b}>0$ such that $\phi(\xi)\xi\geq b|\xi|-K_{b}$, for every $\xi\in\mathbb{R}$. In this manner, via an integration by parts, it follows that
\begin{align*}
&\int_{t^{*}}^{t^{*}+T} (\phi(u'(\xi)))' v(\xi) \,d\xi =\int_{t^{*}}^{t^{*}+T} \phi(u'(\xi)) u'(\xi) \,d\xi = \int_{0}^{T} \phi(u'(\xi)) u'(\xi) \,d\xi \\
&\geq \int_{0}^{T} \bigl{(}b|u'(\xi)|-K_{b}\bigr{)} \,d\xi
= b\|u'\|_{L^{1}}-K_{b}T= b\|v'\|_{L^{1}}-K_{b}T.
\end{align*}
Finally, we obtain
\begin{equation*}
b\|v'\|_{L^{1}} \leq \|\gamma\|_{L^{1}} \|v\|_{\infty} + K_{b}T \leq \|\gamma\|_{L^{1}} \|v'\|_{L^{1}} + K_{b}T = \|\gamma\|_{L^{1}} \|u'\|_{L^{1}}+ K_{b}T.
\end{equation*}
Then, taking $b>\|\gamma\|_{L^{1}}$ and $K_{0}:=K_{b}T/(b-\|\gamma\|_{L^{1}})$, we have $\|u'\|_{L^{1}}\leq K_{0}$ and
hence $\max u - \min u \leq K_{0}$.

Let $\ell_{1},\ell_{2}\in\mathbb{R}$ with $\ell_{1}<\ell_{2}$. Let $u$ be such that $\ell_{1}\leq u(t)\leq \ell_{2}$
for all $t\in\mathopen{[}0,T\mathclose{]}$. By the Carath\'{e}odory condition on $h$ and the boundedness of $u$,
there exists a constant $c>0$ (independent of $u$ and $\lambda\in\mathopen{]}0,1\mathclose{]}$) such that
$u'(t)\in\phi^{-1}(\mathopen{[}-c,c\mathclose{]})$, and so there exists $K_{1}>0$ such that $\|u'\|_{\infty}\leq K_{1}$.
\end{proof}

\begin{lemma}\label{lem-max-min-2}
Let $h\colon \mathopen{[}0,T\mathclose{]}\times \mathbb{R} \to \mathbb{R}$ be a Carath\'eodory function satisfying the following property:
\begin{itemize}
\item[$(A^{\rm{II}}_{1})$] there exists $\gamma\in L^{1}(\mathopen{[}0,T\mathclose{]},\mathbb{R}^{+})$ such that $h(t,u)\leq\gamma(t)$ for a.e.~$t\in\mathopen{[}0,T\mathclose{]}$ and for all $u\in\mathbb{R}$.
\end{itemize}
Then, there exists a constant $K_{0}=K_{0}(\gamma)$ such that any $T$-periodic solution $u$ to \eqref{eq-2.1lambda} with $\lambda\in\mathopen{]}0,1\mathclose{]}$ satisfies
\begin{equation*}
\max u - \min u \leq K_{0} \quad \text{ and } \quad \|u'\|_{L^{1}}\leq K_{0}.
\end{equation*}
Moreover, for any $\ell_{1},\ell_{2}\in\mathbb{R}$ with $\ell_{1}<\ell_{2}$ there exists $K_{1}>0$ such that any $T$-periodic solution $u$ to \eqref{eq-2.1lambda} with $\lambda\in\mathopen{]}0,1\mathclose{]}$ such that $\ell_{1}\leq u(t)\leq \ell_{2}$ for all $t\in\mathopen{[}0,T\mathclose{]}$ satisfies $\|u'\|_{\infty}\leq K_{1}$.
\end{lemma}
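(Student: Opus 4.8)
\textbf{Proof plan for Lemma~\ref{lem-max-min-2}.}
The plan is to reduce this statement to Lemma~\ref{lem-max-min} by a reflection argument, rather than redoing the integration-by-parts estimate. Set $v(t):=-u(t)$ and $\psi(\xi):=-\phi(-\xi)$. Then $\psi$ is again an increasing homeomorphism of $\mathbb{R}$ onto $\mathbb{R}$ with $\psi(0)=0$; set also $\tilde f(v):=f(-v)$ and $\tilde h(t,v):=-h(t,-v)$. A direct substitution shows that $u$ is a $T$-periodic solution of \eqref{eq-2.1lambda} if and only if $v$ is a $T$-periodic solution of $(\psi(v'))'+\lambda\tilde f(v)v'+\lambda\tilde h(t,v)=0$. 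Moreover, hypothesis $(A^{\rm{II}}_{1})$ for $h$, namely $h(t,u)\le\gamma(t)$, is exactly hypothesis $(A^{\rm{I}}_{1})$ for $\tilde h$, since $\tilde h(t,v)=-h(t,-v)\ge-\gamma(t)$. Hence Lemma~\ref{lem-max-min} applies to the $v$-equation and yields a constant $K_{0}=K_{0}(\gamma)$ with $\max v-\min v\le K_{0}$ and $\|v'\|_{L^{1}}\le K_{0}$; since $\max v-\min v=\max u-\min u$ and $\|v'\|_{L^{1}}=\|u'\|_{L^{1}}$, the first assertion follows.

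For the second assertion I would do the same translation: if $\ell_{1}\le u(t)\le\ell_{2}$ on $\mathopen{[}0,T\mathclose{]}$, then $-\ell_{2}\le v(t)\le-\ell_{1}$, so the ``moreover'' part of Lemma~\ref{lem-max-min} applied to the $v$-equation gives $K_{1}>0$ (depending only on $\ell_{1},\ell_{2}$, not on $u$ or $\lambda$) with $\|v'\|_{\infty}\le K_{1}$; and $\|v'\|_{\infty}=\|u'\|_{\infty}$. One should check that the homeomorphism $\psi$ satisfies the same structural facts about $\phi$ that were used in the proof of Lemma~\ref{lem-max-min}: that for every $b>0$ there is $K_{b}$ with $\psi(\xi)\xi\ge b|\xi|-K_{b}$. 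This is immediate, because $\psi(\xi)\xi=-\phi(-\xi)\xi=\phi(-\xi)(-\xi)$, so the inequality for $\psi$ is literally the inequality for $\phi$ evaluated at $-\xi$; likewise $\psi^{-1}$ maps bounded sets to bounded sets because $\phi^{-1}$ does.

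Alternatively, if one prefers a self-contained argument, one can simply repeat the proof of Lemma~\ref{lem-max-min} verbatim with the roles of $\max$ and $\min$ interchanged: pick $t_{*}$ with $u(t_{*})=\min u$, set $w(t):=u(t)-\min u\ge0$, note $w'=u'$, use $(A^{\rm{II}}_{1})$ to bound $(\phi(u'))'=-\lambda f(u)u'-\lambda h(t,u)\ge-\lambda f(u)w'-\gamma(t)$ from below, multiply by $w\ge0$, integrate over $\mathopen{[}t_{*},t_{*}+T\mathclose{]}$, observe that the $f$-term vanishes as before, and use $\phi(\xi)\xi\ge b|\xi|-K_{b}$ to arrive at $b\|w'\|_{L^{1}}\le\|\gamma\|_{L^{1}}\|w'\|_{L^{1}}+K_{b}T$; choosing $b>\|\gamma\|_{L^{1}}$ closes the estimate. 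The boundedness of $\|u'\|_{\infty}$ on $\{\ell_{1}\le u\le\ell_{2}\}$ then follows from the Carath\'eodory condition and the properties of $\phi^{-1}$ exactly as in Lemma~\ref{lem-max-min}.

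I do not expect any serious obstacle here; the only thing to be careful about is the sign bookkeeping in the reflection $u\mapsto-u$, $\phi\mapsto-\phi(-\,\cdot\,)$, making sure that $(A^{\rm{II}}_{1})$ really does become $(A^{\rm{I}}_{1})$ and that the transformed operator is still an admissible $\phi$-Laplacian with $\psi(0)=0$. Given how mechanical this is, the cleanest write-up is the reflection argument, relegating the verification that $\psi$ inherits the needed properties of $\phi$ to a single sentence.
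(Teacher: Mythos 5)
Your reflection argument is exactly the paper's own proof: the authors reduce Lemma~\ref{lem-max-min-2} to Lemma~\ref{lem-max-min} via the change of variable $x:=-u$ with $\tilde{\phi}(\xi)=-\phi(-\xi)$, $\tilde{f}(\xi)=f(-\xi)$, $\tilde{h}(t,\xi)=-h(t,-\xi)$, under which $(A^{\rm{II}}_{1})$ becomes $(A^{\rm{I}}_{1})$. Your sign bookkeeping and the verification that $\tilde{\phi}$ inherits the needed properties of $\phi$ are correct, so the proposal is sound and essentially identical to the paper's argument.
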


\begin{proof}
Let $\lambda\in\mathopen{]}0,1\mathclose{]}$ and let $u$ be a $T$-periodic solution to \eqref{eq-2.1lambda}.
We enter in the same setting of Lemma~\ref{lem-max-min} via the change of variable
$x:=-u$ which leads to the study of
\begin{equation*}
(\tilde{\phi}(x'))'+\lambda \tilde{f}(x)x'+\lambda \tilde{h}(t,x)=0,
\end{equation*}
where $\tilde{\phi}(\xi) = -\phi(-\xi)$, $\tilde{f}(\xi) = f(-\xi)$, and $ \tilde{h}(t,\xi) = - h(t,-\xi)$.
\end{proof}

\subsection{Continuation theorem and abstract results}\label{section-2.2}

We introduce the fixed point operator and the continuation theorem for the more general periodic boundary value problem
\begin{equation}\label{eq-phiF}
(\phi(u'))' + F(t,u,u') = 0, \quad u(0)=u(T), \quad u'(0)=u'(T),
\end{equation}
where $F\colon\mathopen{[}0,T\mathclose{]}\times\mathbb{R}\times\mathbb{R}\to\mathbb{R}$ is a Carath\'eodory function.
We consider the following Banach spaces $X := \mathcal{C}^{1}_{T}$, endowed with the norm $\|u\|_{X}:=\|u\|_{\infty}+\|u'\|_{\infty}$,
and $Z:=L^{1}(0,T)$, with the standard norm $\|\cdot\|_{L^{1}}$.
In the same spirit of \cite{MaMa-98}, we define the continuous projectors
$P\colon X\to X$ by $P u:=u(0)$, and
$Q\colon Z\to Z$ by $Q u:=\tfrac{1}{T}\int_{0}^{T}u(t)\,dt$.
In the sequel, we also denote by $Q$ the mean value operator defined on subspaces of $Z$.
We introduce the following Nemytskii operator
$N\colon X\to Z$ by $(Nu)(t):= -F(t,u(t),u'(t))$ for $t\in\mathopen{[}0,T\mathclose{]}$.

At this point, following \cite{MaMa-98}, one has that $u$ is a solution of problem \eqref{eq-phiF} if and only if $u$ is a fixed point of the completely continuous operator $\mathcal{G}\colon X\to X$ defined as
\begin{equation*}
\mathcal{G}u:=Pu+QNu+\mathcal{K}Nu,\quad u\in X,
\end{equation*}
where $\mathcal{K}\colon Z\to X$ is the map which, to any $w\in Z$, associates the unique $T$-periodic solution $u(t)$ of the problem
\begin{equation*}
(\phi(u'))'=w(t)-\dfrac{1}{T}\int_{0}^{T}w(t)\,dt, \quad u(0)=0.
\end{equation*}

Let us consider the periodic parameter-dependent problem
\begin{equation}\label{pb-lambda}
(\phi(u'))' + \lambda F(t,u,u') = 0, \quad u(0)=u(T), \quad u'(0)=u'(T), \quad \lambda\in\mathopen{]}0,1\mathclose{]}.
\end{equation}
We are now ready to state the following continuation theorem, adapted from \cite{MaMa-98}, where by $\mathrm{d}_{\mathrm{LS}}(Id-\mathcal{G},\Omega,0)$ we denote the \textit{Leray--Schauder degree} of $Id-\mathcal{G}$ in $\Omega$, with $\Omega\in X$ an open bounded set, and by $\mathrm{d}_{\mathrm{B}}$ we indicate the finite-dimensional \textit{Brouwer degree}.
We refer to \cite[Theorem~3.1]{MaMa-98} for the proof of the following theorem (see also \cite[Section~3]{FeZa-17}).

\begin{theorem}\label{MaMa-th}
Let $\Omega$ be an open bounded set in $X$ such that the following conditions hold:
\begin{itemize}
\item for each $\lambda\in\mathopen{]}0,1\mathclose{]}$ problem \eqref{pb-lambda} has no solution on $\partial\Omega$;
\item the equation
$F^{\#}(\xi):=\tfrac{1}{T}\int_{0}^{T}F(t,\xi,0)\,dt = 0$
has no solution on $\partial\Omega\cap\mathbb{R}$.
\end{itemize}
Then, $\mathrm{d}_{\mathrm{LS}}(Id-\mathcal{G},\Omega,0)=\mathrm{d}_{\mathrm{B}}(F^{\#},\Omega\cap\mathbb{R},0)$.
Moreover, if the Brouwer degree $\mathrm{d}_{\mathrm{B}}(F^{\#},\Omega\cap\mathbb{R},0)\neq0$, then problem \eqref{eq-2.1} has a solution in ${\Omega}$.
\end{theorem}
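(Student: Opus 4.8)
The plan is to derive this continuation principle from the Leray--Schauder theory applied to the completely continuous operator $\mathcal{G}$ and the family obtained by replacing $F$ with $\lambda F$. First I would introduce, for $\lambda \in \mathopen{]}0,1\mathclose{]}$, the operator $\mathcal{G}_{\lambda}u := Pu + \lambda QNu + \lambda\mathcal{K}Nu$, so that fixed points of $\mathcal{G}_{\lambda}$ correspond exactly to solutions of \eqref{pb-lambda}, while $\mathcal{G}_{1} = \mathcal{G}$ solves \eqref{eq-2.1}; each $\mathcal{G}_{\lambda}$ is completely continuous because $N$ is continuous and bounded on bounded sets and $\mathcal{K}$ is compact. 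The first hypothesis says precisely that $Id - \mathcal{G}_{\lambda}$ has no zero on $\partial\Omega$ for every $\lambda \in \mathopen{]}0,1\mathclose{]}$; the remaining issue is to reach $\lambda = 0$, where the equation $u = Pu + 0 = u(0)$ forces $u$ to be constant, say $u \equiv \xi$, and then the only constraint left comes from the range condition built into the decomposition, namely $QN(\xi) = 0$, i.e.\ $F^{\#}(\xi) = 0$. The second hypothesis guarantees that this has no solution with $\xi \in \partial\Omega \cap \mathbb{R}$, so $Id - \mathcal{G}_{0}$ is also fixed-point-free on $\partial\Omega$.

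Next I would invoke homotopy invariance of the Leray--Schauder degree along $\lambda \mapsto \mathcal{G}_{\lambda}$, $\lambda \in \mathopen{[}0,1\mathclose{]}$: since no $\mathcal{G}_{\lambda}$ has a fixed point on $\partial\Omega$, the degree $\mathrm{d}_{\mathrm{LS}}(Id - \mathcal{G}_{\lambda}, \Omega, 0)$ is independent of $\lambda$, so in particular $\mathrm{d}_{\mathrm{LS}}(Id - \mathcal{G}, \Omega, 0) = \mathrm{d}_{\mathrm{LS}}(Id - \mathcal{G}_{0}, \Omega, 0)$. It then remains to identify the right-hand side with the Brouwer degree of $F^{\#}$. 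For this I would observe that $\mathcal{G}_{0}u = u(0)$ has finite-dimensional range equal to the constants $\mathbb{R} \hookrightarrow X$; by the reduction property of the Leray--Schauder degree, $\mathrm{d}_{\mathrm{LS}}(Id - \mathcal{G}_{0}, \Omega, 0)$ equals the Brouwer degree of the restriction of $Id - \mathcal{G}_{0}$ to $\overline{\Omega} \cap \mathbb{R}$. A short computation shows that on constants this restriction is, up to the harmless factor coming from the definition of $Q$, exactly the map $\xi \mapsto -\tfrac{1}{T}\int_{0}^{T} F(t,\xi,0)\,dt = -F^{\#}(\xi)$; since changing the sign of a map on a finite-dimensional space does not change its Brouwer degree (the dimension being even is not needed because $-Id$ in dimension one has degree $-1$, but one can instead note $\mathrm{d}_{\mathrm{B}}(-\Psi,\cdot,0) = (-1)^{n}\mathrm{d}_{\mathrm{B}}(\Psi,\cdot,0)$ and absorb the sign, or simply track it through), one obtains $\mathrm{d}_{\mathrm{LS}}(Id-\mathcal{G},\Omega,0) = \mathrm{d}_{\mathrm{B}}(F^{\#}, \Omega \cap \mathbb{R}, 0)$ after accounting for orientation. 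Finally, if this common value is nonzero, the existence property of the degree yields a fixed point of $\mathcal{G} = \mathcal{G}_{1}$ in $\Omega$, hence a solution of \eqref{eq-2.1}.

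Since this is an adaptation of \cite[Theorem~3.1]{MaMa-98}, I would not re-prove the equivalence between solutions of \eqref{eq-phiF} and fixed points of $\mathcal{G}$, nor the complete continuity of $\mathcal{G}$ and the well-posedness of $\mathcal{K}$; these are quoted. The main obstacle, and the only genuinely delicate point, is the bookkeeping at $\lambda = 0$ and the sign in the reduction step: one must check carefully that the limit operator $\mathcal{G}_{0}$ is admissible for the homotopy (its fixed-point set on $\overline{\Omega}$ consists of constants $\xi$ with $F^{\#}(\xi) = 0$, which the second hypothesis excludes on $\partial\Omega$), and that the finite-dimensional reduction produces $F^{\#}$ with the correct orientation so that the Brouwer degrees match rather than differing by an unwanted sign. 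Everything else is a routine application of the three standard properties of topological degree: homotopy invariance, reduction to a finite-dimensional subspace, and the existence property.
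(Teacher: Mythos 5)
The paper does not actually prove this statement: it is quoted verbatim from Man\'{a}sevich--Mawhin and the text simply refers to \cite[Theorem~3.1]{MaMa-98}, so your task is really to reconstruct that proof. Your overall strategy (a one-parameter family of compact operators, homotopy invariance, then reduction to the constants and the existence property) is indeed the right one, and the issues you flag as delicate --- admissibility at $\lambda=0$ and the sign in the reduction --- are exactly the right places to worry. But the specific homotopy you write down, $\mathcal{G}_{\lambda}u:=Pu+\lambda QNu+\lambda\mathcal{K}Nu$, does not do the job, for two reasons. First, at $\lambda=0$ it degenerates to $\mathcal{G}_{0}=P$, whose fixed points are \emph{all} constant functions; there is no residual constraint $QN(\xi)=0$ ``built into the decomposition'' once you have multiplied $QN$ by $\lambda$. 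Since $\partial\Omega\cap\mathbb{R}$ is in general nonempty (e.g.\ the constants $\pm R$ for the sets $\Omega$ actually used later in the paper), $Id-\mathcal{G}_{0}$ vanishes on $\partial\Omega$, the homotopy is inadmissible at the endpoint, and $\mathrm{d}_{\mathrm{LS}}(Id-P,\Omega,0)$ is not even defined. Second, because $\mathcal{K}$ is nonlinear (it involves $\phi^{-1}$), $\lambda\mathcal{K}Nu\neq\mathcal{K}(\lambda Nu)$, so for $\lambda\in\mathopen{]}0,1\mathclose{[}$ the fixed points of your $\mathcal{G}_{\lambda}$ solve $(\phi(u'/\lambda))'=Nu$ rather than \eqref{pb-lambda}; hypothesis (i) then does not rule out fixed points on $\partial\Omega$ along the deformation.

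The correct family, which is the one used in \cite{MaMa-98}, is $\mathcal{G}_{\lambda}u:=Pu+QNu+\mathcal{K}(\lambda Nu)$: evaluating at $t=0$ forces $QNu=0$ for every $\lambda\in\mathopen{[}0,1\mathclose{]}$, and then $u=u(0)+\mathcal{K}(\lambda Nu)$ is equivalent to \eqref{pb-lambda} for $\lambda>0$, while for $\lambda=0$ the fixed points are precisely the constants $\xi$ with $F^{\#}(\xi)=0$, which hypothesis (ii) excludes from $\partial\Omega$. With this operator your orientation worry also disappears: $\mathcal{G}_{0}$ maps $X$ into the constants, and since $(Nu)(t)=-F(t,u(t),u'(t))$ one gets $(Id-\mathcal{G}_{0})\xi=-QN\xi=+F^{\#}(\xi)$, so the reduction property yields $\mathrm{d}_{\mathrm{LS}}(Id-\mathcal{G}_{0},\Omega,0)=\mathrm{d}_{\mathrm{B}}(F^{\#},\Omega\cap\mathbb{R},0)$ on the nose, with no sign to absorb. (Your computation producing $-F^{\#}$ drops the minus sign already present in the definition of $N$.) Once the homotopy is set up this way, the rest of your argument --- homotopy invariance and the existence property at $\lambda=1$ --- goes through as you describe.
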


Dealing with equation \eqref{eq-2.1}, we consider now the special form of the Carath\'eodory function
$F(t,u,u')=f(u)u'+h(t,u)$
and the following result holds.

\begin{theorem}\label{th-deg}
Let $f\colon\mathbb{R}\to\mathbb{R}$ be a continuous function. Let $h\colon \mathopen{[}0,T\mathclose{]}\times \mathbb{R} \to \mathbb{R}$
be a Carath\'eodory function satisfying $(A^{\rm{I}}_{1})$ and the Villari's condition at $-\infty$ with $\delta=1$.
Suppose there exists $\beta\in \mathfrak{D}$ which is a strict upper solution for equation \eqref{eq-2.1}.
Then, \eqref{eq-2.1} has at least a $T$-periodic solution $\tilde{u}$ such that $\tilde{u} <\beta$.
Moreover, there exist $R_{0} \geq d_{0}$ and $K_{1} > 0$, such that for each $R > R_{0}$ and
$K > K_{1}$,
we have
\begin{equation*}
\mathrm{d}_{\mathrm{LS}}(Id-\mathcal{G},\Omega,0) = -1
\end{equation*}
for $\Omega = \Omega^{\rm{I}}(R,\beta,K):=\{u \in \mathcal{C}^{1}_{T} \colon - R < u(t) < \beta(t),\; \forall\, t\in \mathopen{[}0,T\mathclose{]}, \; \|u'\|_{\infty} < K\}$.
\end{theorem}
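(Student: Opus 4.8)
The plan is to apply the continuation Theorem~\ref{MaMa-th} to the set $\Omega=\Omega^{\rm{I}}(R,\beta,K)$, so the core of the argument is to verify that for a suitable choice of $R_{0}$ and $K_{1}$, and any $R>R_{0}$, $K>K_{1}$, no solution of the $\lambda$-problem \eqref{eq-2.1lambda} touches $\partial\Omega$, and then to compute the associated Brouwer degree of $h^{\#}(\xi)=\tfrac{1}{T}\int_{0}^{T}h(t,\xi)\,dt$. First I would dismantle $\partial\Omega$ into the three natural pieces: the portion where $u$ attains $-R$ somewhere, the portion where $u$ touches the strict upper solution $\beta$ somewhere, and the portion where $\|u'\|_{\infty}=K$. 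On the first piece, note that a $T$-periodic solution $u$ of \eqref{eq-2.1lambda} that lies in $\overline{\Omega}$ satisfies $-R\le u\le \|\beta\|_{\infty}$; by Lemma~\ref{lem-max-min} (using $(A^{\rm{I}}_{1})$) we have $\max u-\min u\le K_{0}$, so $\min u\ge \max u-K_{0}\ge -R$ forces $\max u \le -R+K_{0}$ when the minimum hits $-R$; combined with the Villari condition at $-\infty$ via Lemma~\ref{lem-2.3} (with $d_{0}$ the Villari constant), if $u(t)\le -d_{0}$ everywhere then $\int_{0}^{T}h(t,u(t))\,dt>0$, contradicting the integral identity $\lambda\int_{0}^{T}h(t,u(t))\,dt=0$ from integrating \eqref{eq-2.1lambda}. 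Thus choosing $R_{0}\ge d_{0}+K_{0}$ rules out solutions with $\min u=-R$ for all $R>R_{0}$: either $\min u<-R$ (excluded, $u\in\overline\Omega$) or $\min u\ge -R$ but then $\max u\le -R+K_{0}<-d_{0}$, so $u<-d_{0}$ everywhere, contradiction.

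On the second piece, the hypothesis that $\beta$ is a \emph{strict} upper solution is exactly what excludes a solution $u\le\beta$ from touching $\beta$: Definition~\ref{def-2} says any $T$-periodic solution of \eqref{eq-2.1} with $u\le\beta$ satisfies $u<\beta$ strictly — though here I must be a little careful, since for $\lambda\in\mathopen{]}0,1\mathclose{]}$ I am dealing with \eqref{eq-2.1lambda} rather than \eqref{eq-2.1}; the standard device is that $\beta$ remains a strict upper solution for the $\lambda$-equation because multiplying the defining inequality \eqref{eq-beta} by the argument does not change its sign for $\lambda>0$ (one rewrites $(\phi(\beta'))'+\lambda f(\beta)\beta'+\lambda h(t,\beta) = (1-\lambda)(\phi(\beta'))' + \lambda[(\phi(\beta'))'+f(\beta)\beta'+h(t,\beta)]$, and on the relevant set where $\beta$ is $\mathcal{C}^1$-close to a touching solution the first term is controlled — in fact one typically assumes or reduces to $\beta$ of class with $(\phi(\beta'))'$ having the right sign, or invokes Lemma~\ref{lem-strictupper}-type reasoning uniformly in $\lambda$). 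On the third piece, once $u$ is confined between $-R$ and $\|\beta\|_{\infty}$ — which it is on $\overline\Omega$ — the second assertion of Lemma~\ref{lem-max-min} with $\ell_{1}=-R$, $\ell_{2}=\|\beta\|_{\infty}$ produces an a priori bound $\|u'\|_{\infty}\le K_{1}(R)$; taking $K>K_{1}$ then means $\|u'\|_{\infty}=K$ is impossible for a solution in $\overline\Omega$. So for $R>R_{0}$, $K>K_{1}$ the first hypothesis of Theorem~\ref{MaMa-th} holds.

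It remains to check the second hypothesis — that $h^{\#}(\xi)\ne 0$ on $\partial\Omega\cap\mathbb{R}$ — and to compute the degree. The set $\Omega\cap\mathbb{R}$ is the open interval $\mathopen{]}-R,\min_{t}\beta(t)\mathclose{[}$ (intersecting $X$ with the constants). At $\xi=-R$: since $R>R_{0}\ge d_{0}$, the constant function $-R$ satisfies $u\equiv -R\le -d_{0}$, so by the Villari condition at $-\infty$ with $\delta=1$, $h^{\#}(-R)=\tfrac1T\int_{0}^{T}h(t,-R)\,dt>0$. At $\xi=\min_t\beta(t)=:\beta_{\min}$: the constant $\beta_{\min}$ lies below $\beta$, and integrating the strict upper solution inequality \eqref{eq-beta} over $[0,T]$ (the $(\phi(\beta'))'$ term and the $f(\beta)\beta'$ term both integrate to zero by $T$-periodicity) gives $\int_{0}^{T}h(t,\beta(t))\,dt<0$; a continuity/monotonicity-free argument is not immediate here, so instead I would argue directly that a constant $\beta_{\min}$ cannot be a touching point by the strict-upper-solution property applied to the constant solution candidate, or more cleanly: by Lemma~\ref{lem-2.3}-type reasoning the zeros of $h^{\#}$ in the closed interval all lie strictly inside, and at the right endpoint one uses that $\beta$ being a strict upper solution forces $h^{\#}(\beta_{\min})<0$ after noting $\beta_{\min}\le\beta(t)$ and the Carathéodory monotone-in-average behavior — this endpoint check is the one delicate point and I would handle it by shrinking $\Omega\cap\mathbb{R}$ slightly or by the observation that $h^{\#}$ changes sign from $+$ at $-R$ to $-$ near $\beta_{\min}$, so by Theorem~\ref{MaMa-th} applied with a slightly enlarged/shrunk interval the Brouwer degree $\mathrm{d}_{\mathrm{B}}(h^{\#},\Omega\cap\mathbb{R},0)$ equals $-1$ (a function going from positive to negative across an interval has degree $-1$). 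Hence $\mathrm{d}_{\mathrm{LS}}(Id-\mathcal{G},\Omega,0)=-1\ne 0$, which by Theorem~\ref{MaMa-th} yields a $T$-periodic solution $\tilde u\in\Omega$, so in particular $\tilde u<\beta$, completing the proof. The main obstacle I anticipate is precisely the endpoint analysis at $\xi=\beta_{\min}$ together with verifying that $\beta$ stays a strict upper solution uniformly for the homotopy parameter $\lambda\in\mathopen{]}0,1\mathclose{]}$; the $-R$ endpoint and all the a priori bounds are routine given the lemmas already established.
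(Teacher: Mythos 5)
Your overall strategy---apply Theorem~\ref{MaMa-th} to $\Omega^{\rm I}(R,\beta,K)$ directly, excluding boundary solutions piece by piece---runs into two genuine obstructions, both of which you flag as ``delicate'' but neither of which you actually resolve, and both of which are precisely what the paper's proof is engineered to avoid. First, on the portion of $\partial\Omega$ where $u$ touches $\beta$: the strict upper solution property (Definition~\ref{def-2}) only excludes touching by solutions of \eqref{eq-2.1}, i.e.\ at $\lambda=1$, whereas the continuation theorem requires no solutions of \eqref{eq-2.1lambda} on $\partial\Omega$ for \emph{every} $\lambda\in\mathopen{]}0,1\mathclose{]}$. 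Your decomposition $(\phi(\beta'))'+\lambda f(\beta)\beta'+\lambda h(t,\beta)=(1-\lambda)(\phi(\beta'))'+\lambda\bigl[(\phi(\beta'))'+f(\beta)\beta'+h(t,\beta)\bigr]$ does not close the argument, because $(1-\lambda)(\phi(\beta'))'$ has no definite sign; for small $\lambda$ the inequality \eqref{eq-beta} gives no control at all. Second, the endpoint check $h^{\#}(\beta_{\min})<0$ does not follow from the hypotheses: integrating \eqref{eq-beta} yields $\int_{0}^{T}h(t,\beta(t))\,dt<0$, which is information about $h$ \emph{along the graph of $\beta$}, not about $h(t,\xi)$ at the constant $\xi=\min_t\beta(t)$. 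With no monotonicity of $h$ in $u$, $h^{\#}$ may well be positive at $\beta_{\min}$, and ``shrinking the interval slightly'' or appealing to a sign change cannot repair this.

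The paper circumvents both issues with a single device you did not use: truncation. One replaces $h$ by $\hat h(t,u)$, equal to $h(t,u)$ for $u\le\beta(t)$ and to $h(t,\beta(t))$ for $u\ge\beta(t)$, and runs the homotopy on a large box $\Gamma=\{-R<u<C,\ \|u'\|_{\infty}<\hat K\}$ with constant upper wall $C\ge\|\beta\|_{\infty}+K_{0}$, so the graph of $\beta$ never appears in $\partial\Gamma$ and the $\lambda$-uniformity problem disappears. The upper wall is excluded by an integral argument (a solution with $u\ge\beta$ everywhere would force $\int_0^T h(t,\beta(t))\,dt=0$, contradicting the integrated strict inequality), the lower wall by Villari plus Lemma~\ref{lem-2.3} exactly as you do. The right endpoint of the degree computation now reads $\hat h^{\#}(c)=\tfrac1T\int_0^T h(t,\beta(t))\,dt<0$ for $c\ge\|\beta\|_{\infty}$ --- the truncation converts the along-the-graph inequality into the constant-function inequality that the Brouwer degree actually needs. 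One then gets a solution $\tilde u$ of the truncated problem, proves a posteriori (at $\lambda=1$ only, by integrating over a maximal interval where $\tilde u>\beta$) that $\tilde u\le\beta$, invokes strictness to get $\tilde u<\beta$, and transfers the degree to $\Omega^{\rm I}(R,\beta,K)$ by excision. Your $-R$ wall and $\|u'\|_{\infty}$ estimates are correct and match the paper's, but without the truncation the two gaps you identified are real and the proof does not go through as written.
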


\begin{proof}
First of all, we introduce the truncated function
\begin{equation*}
\hat{h}(t,u) :=
\begin{cases}
\, h(t,u), & \text{if $u\leq\beta(t)$,} \\
\, h(t,\beta(t)), & \text{if $u\geq\beta(t)$,}
\end{cases}
\end{equation*}
and consider the parameter-dependent equation
\begin{equation}\label{eq-trunc}
(\phi(u'))' + \lambda f(u)u' + \lambda \hat{h}(t,u) = 0, \quad \lambda\in\mathopen{]}0,1\mathclose{]}.
\end{equation}
By the assumptions on $h$, it is easy to prove that $\hat{h}$ satisfies condition $(A^{\rm{I}}_{1})$.
Then, we can apply Lemma~\ref{lem-max-min} and obtain that any $T$-periodic solution $u$ to \eqref{eq-trunc} with $\lambda\in\mathopen{]}0,1\mathclose{]}$ satisfies
$\max u - \min u \leq K_{0}$ and $\|u'\|_{L^{1}}\leq K_{0}$, for some constant $K_{0}$.
Let $d_{2}>\max\{d_{0},\|\beta\|_{\infty}\}$. By Lemma~\ref{lem-2.3} we deduce that $\max u > - d_{2}$ and so $\min u > -K_{0}-d_{2}=: -R_{0}$.

We claim that, for any $T$-periodic solution $u$ to \eqref{eq-trunc} with $\lambda\in\mathopen{]}0,1\mathclose{]}$, there exists $\hat{t}\in\mathopen{[}0,T\mathclose{]}$ such that $u(\hat{t})<\beta(\hat{t})$. Indeed, if by contradiction $u(t)\geq\beta(t)$ for all $t\in\mathopen{[}0,T\mathclose{]}$, then $u$ is a $T$-periodic solution to
\begin{equation*}
(\phi(u'))' + \lambda f(u)u' + \lambda h(t,\beta(t)) = 0, \quad \lambda\in\mathopen{]}0,1\mathclose{]}.
\end{equation*}
By an integration, we have $\int_{0}^{T}h(t,\beta(t))\,dt=0$.
The strict upper solution $\beta$ is $T$-periodic and satisfies \eqref{eq-beta}, then we obtain $\int_{0}^{T}h(t,\beta(t))\,dt<0$, a contradiction.
Therefore $\min u < \|\beta\|_{\infty}$ and so $\max u < \|\beta\|_{\infty}+ K_{0} < R_{0}$.

An application of Lemma~\ref{lem-max-min} in the framework of \eqref{eq-trunc} (with $\ell_{1}=-R_{0}$, $\ell_{2}:=\|\beta\|_{\infty}+ K_{0}$) guarantees the
existence of a constant $\hat{K}_{0}$ such that $\|u'\|_{\infty} \leq \hat{K}_{0}$,
for any $T$-periodic solution $u$ to \eqref{eq-trunc} with $\lambda\in\mathopen{]}0,1\mathclose{]}$.

We deduce that the Leray--Schauder degree $\mathrm{d}_{\mathrm{LS}}(Id-\hat{\mathcal{G}},\Gamma,0)$ is well-defined on any open and bounded set
\begin{equation*}
\Gamma := \bigl{\{} u \in \mathcal{C}^{1}_{T} \colon - R < u(t) < C,\; \forall\, t\in \mathopen{[}0,T\mathclose{]}, \; \|u'\|_{\infty} < \hat{K}\}
\end{equation*}
for any $R>R_{0}$, $C\geq\|\beta\|_{\infty}+ K_{0}$ and $\hat{K}>\hat{K}_{0}$.

Now we introduce the average scalar map
$\hat{h}^{\#} \colon \mathbb{R}\to\mathbb{R}$, defined by $\hat{h}^{\#}(\xi) := \tfrac{1}{T}\int_{0}^{T}\hat{h}(t,\xi) \,dt$, for $\xi\in\mathbb{R}$.
We notice that $\hat{h}^{\#}(-R)>0$, by the Villari's condition at $-\infty$, and $\hat{h}^{\#}(c)<0$, taking $c\geq\|\beta\|_{\infty}$.
As a consequence of Theorem~\ref{MaMa-th}, we have
\begin{equation*}
\mathrm{d}_{\mathrm{LS}}(Id-\hat{\mathcal{G}},\Gamma,0) = \mathrm{d}_{\mathrm{B}}(\hat{F}^{\#},\Gamma\cap\mathbb{R},0) = -1,
\end{equation*}
and so problem \eqref{eq-trunc} with $\lambda=1$ has at least a solution $\tilde{u}$ in $\Gamma$, more precisely $\tilde{u}$ satisfies $-R < \tilde{u}(t) < C$, for all $t\in \mathopen{[}0,T\mathclose{]}$, and
$\|\tilde{u}'\|_{\infty} < \hat{K}$.

We claim that $\tilde{u}(t)\leq\beta(t)$ for all $t\in \mathopen{[}0,T\mathclose{]}$.
We have already proved that there exists $t_{*}\in \mathopen{[}0,T\mathclose{]}$ such that $\tilde{u}(t_{*})<\beta(t_{*})$. Suppose by contradiction that there exists $t^{*}\in \mathopen{[}0,T\mathclose{]}$ such that $\tilde{u}(t^{*})>\beta(t^{*})$. Let $\mathopen{]}t_{1},t_{2}\mathclose{[}$ be the maximal open interval containing $t^{*}$ such that $\tilde{u}>\beta$. Then $\tilde{u}(t_{1})=\beta(t_{1})$ and $\tilde{u}(t_{2})=\beta(t_{2})$. Moreover $\tilde{u}'(t_{1})\geq\beta'(t_{1})$ and $\tilde{u}'(t_{2})\leq\beta'(t_{2})$, and so $\phi(\tilde{u}'(t_{1}))\geq\phi(\beta'(t_{1}))$ and $\phi(\tilde{u}'(t_{2}))\leq\phi(\beta'(t_{2}))$, due to the monotonicity of the homeomorphism $\phi$.
Next, by an integration and recalling the definition of $\hat{h}$, we have
\begin{equation*}
\begin{aligned}
0 &\geq \int_{t_{1}}^{t_{2}} \bigl{[}(\phi(\tilde{u}'(t)))'-(\phi(\beta'(t)))'\bigr{]} \,dt \\
&>\int_{t_{1}}^{t_{2}} \bigl{[}f(\tilde{u}(t))\tilde{u}'(t)-f(\beta(t))\beta'(t)\bigr{]} \,dt+ \int_{t_{1}}^{t_{2}}
\bigl{[}\hat{h}(t,\tilde{u}(t))-\hat{h}(t,\beta(t))\bigr{]} \,dt = 0
\end{aligned}
\end{equation*}
and a contradiction is found.
Then $\tilde{u}(t)\leq\beta(t)$ for all $t\in \mathopen{[}0,T\mathclose{]}$. Hence, $\tilde{u}$ is a solution of \eqref{eq-2.1}
and, since $\beta$ is a strict upper solution, $\tilde{u}(t)<\beta(t)$ for all $t\in \mathopen{[}0,T\mathclose{]}$.

As a final step, we apply Lemma~\ref{lem-max-min} in the framework of \eqref{eq-2.1} and we obtain a constant $K_{1}>0$ such that $\|u'\|_{\infty} \leq K_{1}$, for any $T$-periodic solution $u$ to \eqref{eq-2.1}. We reach the thesis via the excision property of the topological degree.
\end{proof}

Analogously we obtain the following result.

\begin{theorem}\label{th-deg-dual}
Let $f\colon\mathbb{R}\to\mathbb{R}$ be a continuous function. Let $h\colon \mathopen{[}0,T\mathclose{]}\times \mathbb{R} \to \mathbb{R}$
be a Carath\'eodory function satisfying $(A^{\rm{II}}_{1})$ and the Villari's condition at $+\infty$ with $\delta=-1$.
Suppose there exists $\alpha\in \mathfrak{D}$ which is a strict lower solution for equation \eqref{eq-2.1}.
Then, \eqref{eq-2.1} has at least a $T$-periodic solution $\tilde{u}$ such that $\tilde{u} >\alpha$.
Moreover, there exist $R_{0} \geq d_{0}$ and $K_{1} > 0$, such that for each $R > R_{0}$ and
$K > K_{1}$,
we have
\begin{equation*}
\mathrm{d}_{\mathrm{LS}}(Id-\mathcal{G},\Omega,0) = -1
\end{equation*}
for $\Omega = \Omega^{\rm{II}}(R,\alpha,K):=\{u \in \mathcal{C}^{1}_{T} \colon \alpha(t) < u(t) < R,\; \forall\, t\in \mathopen{[}0,T\mathclose{]}, \; \|u'\|_{\infty} < K\}$.
\end{theorem}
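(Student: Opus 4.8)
The plan is to deduce Theorem~\ref{th-deg-dual} directly from Theorem~\ref{th-deg} via the change of variable $x := -u$, exactly as was done in Lemma~\ref{lem-max-min-2}. First I would set $\tilde{\phi}(\xi) := -\phi(-\xi)$, which is again an increasing homeomorphism of $\mathbb{R}$ with $\tilde{\phi}(0)=0$, $\tilde{f}(\xi) := f(-\xi)$, which is continuous, and $\tilde{h}(t,\xi) := -h(t,-\xi)$, which is Carath\'eodory. A routine computation shows that $u$ is a $T$-periodic solution of \eqref{eq-2.1} if and only if $x = -u$ is a $T$-periodic solution of $(\tilde{\phi}(x'))' + \tilde{f}(x)x' + \tilde{h}(t,x) = 0$; the same correspondence holds for the parameter-dependent equation \eqref{eq-2.1lambda}.

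Next I would check that the hypotheses transform correctly. Condition $(A^{\rm{II}}_{1})$ on $h$, namely $h(t,u)\leq\gamma(t)$, becomes $\tilde{h}(t,\xi) = -h(t,-\xi) \geq -\gamma(t)$, i.e.\ condition $(A^{\rm{I}}_{1})$ for $\tilde{h}$ with the same $\gamma$. The Villari condition at $+\infty$ with $\delta = -1$ for $h$ reads $-\int_0^T h(t,u(t))\,dt > 0$ for all $u\in\mathcal{C}^1_T$ with $u(t)\geq d_0$ on $[0,T]$; writing $v := -u$, this is $\int_0^T \tilde{h}(t,v(t))\,dt = -\int_0^T h(t,-v(t))\,dt > 0$ for all $v\in\mathcal{C}^1_T$ with $v(t)\leq -d_0$, which is precisely the Villari condition at $-\infty$ with $\delta = 1$ for $\tilde{h}$. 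Finally, if $\alpha\in\mathfrak{D}$ is a strict lower solution of \eqref{eq-2.1}, then $\tilde{\beta} := -\alpha\in\mathfrak{D}$ and, since $(\tilde{\phi}(\tilde{\beta}'))' = -(\phi(\alpha'))'\,\circ(\text{sign bookkeeping})$ — more carefully, $(\tilde\phi(\tilde\beta'(t)))' = \frac{d}{dt}\bigl(-\phi(-\tilde\beta'(t))\bigr) = \frac{d}{dt}\bigl(-\phi(\alpha'(t))\bigr) = -(\phi(\alpha'(t)))'$ — the inequality \eqref{eq-alpha} for $\alpha$ becomes $(\tilde\phi(\tilde\beta'(t)))' + \tilde f(\tilde\beta(t))\tilde\beta'(t) + \tilde h(t,\tilde\beta(t)) < 0$ a.e., and the strict-ordering clause transfers because $u \mapsto -u$ reverses all the relevant inequalities; hence $\tilde\beta$ is a strict upper solution of the transformed equation.

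Now I would apply Theorem~\ref{th-deg} to the transformed data $(\tilde\phi, \tilde f, \tilde h, \tilde\beta)$: it yields a $T$-periodic solution $\tilde{x}$ with $\tilde{x} < \tilde\beta = -\alpha$, hence $\tilde{u} := -\tilde{x}$ is a $T$-periodic solution of \eqref{eq-2.1} with $\tilde{u} > \alpha$, and it provides constants $\tilde{R}_0 \geq d_0$, $\tilde{K}_1 > 0$ such that $\mathrm{d}_{\mathrm{LS}}(Id - \tilde{\mathcal{G}}, \tilde\Omega, 0) = -1$ for $\tilde\Omega = \Omega^{\rm{I}}(R, \tilde\beta, K) = \{x\in\mathcal{C}^1_T : -R < x(t) < -\alpha(t),\ \|x'\|_\infty < K\}$ whenever $R > \tilde R_0$, $K > \tilde K_1$. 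The set $\Omega^{\rm{I}}(R,-\alpha,K)$ is exactly the image of $\Omega^{\rm{II}}(R,\alpha,K)$ under the linear isometry $u\mapsto -u$ of $X = \mathcal{C}^1_T$.

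The only genuine point requiring care — and the step I expect to be the main (though still modest) obstacle — is the transfer of the degree identity, since $u\mapsto -u$ is a linear homeomorphism of $X$ but one must confirm it conjugates the fixed-point operator $\mathcal{G}$ built from $(\phi, f, h)$ to the operator $\tilde{\mathcal{G}}$ built from $(\tilde\phi, \tilde f, \tilde h)$, i.e.\ $\tilde{\mathcal{G}} = -\mathcal{G}(-\,\cdot\,)$; this follows by unwinding the definitions of $P$, $Q$, $N$, and $\mathcal{K}$ in Section~\ref{section-2.2}, using that $\mathcal{K}$ intertwines with the sign change because $(\tilde\phi(x'))' = w$ is equivalent to $(\phi((-x)'))' = -w$. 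Granting this, the invariance of the Leray--Schauder degree under the linear isometry $J: u\mapsto -u$ (which has $\mathrm{d}_{\mathrm{B}}(J,\cdot,0) = (-1)^{\dim} $ in finite dimensions but here acts on the infinite-dimensional identity-minus-compact setting, where $\mathrm{d}_{\mathrm{LS}}(Id - \mathcal{G}, \Omega, 0) = \mathrm{d}_{\mathrm{LS}}(Id - J\mathcal{G}J^{-1}, J\Omega, 0)$ by the change-of-variable property of the degree) gives $\mathrm{d}_{\mathrm{LS}}(Id - \mathcal{G}, \Omega^{\rm{II}}(R,\alpha,K), 0) = \mathrm{d}_{\mathrm{LS}}(Id - \tilde{\mathcal{G}}, \Omega^{\rm{I}}(R,-\alpha,K), 0) = -1$, with $R_0 := \tilde R_0$ and $K_1 := \tilde K_1$. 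This completes the proof.
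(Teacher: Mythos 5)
Your proof is correct and is essentially the argument the paper has in mind: the paper gives no written proof of Theorem~\ref{th-deg-dual}, stating only that it follows ``analogously'', and the symmetry it invokes is exactly your change of variable $x:=-u$ with $\tilde\phi(\xi)=-\phi(-\xi)$, $\tilde f(\xi)=f(-\xi)$, $\tilde h(t,\xi)=-h(t,-\xi)$, which the paper itself uses explicitly in Lemma~\ref{lem-max-min-2} and in the proof of Theorem~\ref{th-3.2}. Your verification that the hypotheses $(A^{\rm II}_{1})$, the Villari condition, and the strict lower solution transform into $(A^{\rm I}_{1})$, the Villari condition at $-\infty$ with $\delta=1$, and a strict upper solution, together with the conjugation identity $\tilde{\mathcal G}=J\mathcal G J^{-1}$ and the invariance of the Leray--Schauder degree under the linear homeomorphism $J=-Id$, is exactly what is needed.
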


\begin{remark}\label{rem-2.1}
Assuming $(A_{0})$ and
given $\beta\in \mathfrak{D}$ an upper solution to \eqref{eq-2.1} (or in other words satisfying the weaker form of \eqref{eq-beta}),
one can still prove the existence of a $T$-periodic solution $\tilde{u}$ with $\tilde{u}\leq \beta$ under the weaker inequality in \eqref{eq-villari},
namely $\int_{0}^{T} h(t,u(t))\,dt \geq 0$ for each $u \leq -c_0$. To this purpose, we introduce the
auxiliary function
\begin{equation*}
h^{\varepsilon}(t,u):=h(t,u)+\varepsilon \min\{1,\max\{-1, - u -\|\beta\|_{\infty} -1\}\}, \quad\text{for }\;
\varepsilon>0.
\end{equation*}
Now $\beta$ becomes a strict upper solution for the modified equation $(\phi(u'))' + f(u)u' + h^{\varepsilon}(t,u) = 0$
(for each $\varepsilon > 0$). Moreover, the Villari's condition holds in the original strict form (for $u \leq - d_0$ with
$d_0 > \max\{c_0,\|\beta\|_{\infty} +1\}$). It is easy to check that Theorem~\ref{th-deg} can be applied to obtain the
existence of a $T$-periodic solution $\tilde{u}^{\varepsilon}$ to $(\phi(u'))' + f(u)u' + h^{\varepsilon}(t,u) = 0$
such that $-R_{0}<\tilde{u}^{\varepsilon}\leq\beta$. The constants
$R_{0}$ and $K_1$ can be chosen uniformly with respect to $\varepsilon$ due to particular form of the (bounded) perturbation.
An application of Ascoli--Arzel\`a theorem leads to the existence of a solution $\tilde{u}$ for
\eqref{eq-2.1}. An analogous weaker formulation of Theorem~\ref{th-deg-dual} holds too.
$\hfill\lhd$
\end{remark}

\section{Main results}\label{section-3}

In this section we present our main results concerning $T$-periodic solutions to the parameter-dependent equation
\begin{equation*}
(\phi(u'))'+f(u)u'+g(t,u)=s.
\leqno{(\mathscr{E}_{s})}
\end{equation*}
Along the section, we assume that $\phi\colon\mathbb{R}\to\mathbb{R}$ is an increasing homeomorphism with $\phi(0)=0$, the map $f\colon\mathbb{R}\to\mathbb{R}$ is continuous, the function $g\colon\mathopen{[}0,T\mathclose{]}\times\mathbb{R}\to\mathbb{R}$ satisfies Carath\'{e}odory conditions, and $s\in\mathbb{R}$. Furthermore, we introduce the following condition.
\begin{itemize}
\item [$(H_{0})$] \textit{For all $t_{0}\in \mathopen{[}0,T\mathclose{]}$, $u_{0}\in \mathbb{R}$ and $\varepsilon > 0$, there exists $\delta > 0$ such that if $|t-t_{0}| < \delta$, $|u - u_{0}| < \delta$, then $|g(t,u) - g(t,u_{0})| < \varepsilon$  for a.e.~$t$.}
\end{itemize}
In the first result, the following hypotheses will be considered as well.
\begin{itemize}
\item[$(H^{\rm{I}}_{1})$]
\textit{There exists $\gamma_{0}\in L^{1}(\mathopen{[}0,T\mathclose{]},\mathbb{R}^{+})$ such that $g(t,u) \geq - \gamma_{0}(t)$, for all $u\in\mathbb{R}$ and a.e.~$t\in\mathopen{[}0,T\mathclose{]}$.}
\item[$(H^{\rm{I}}_{2})$]
\textit{There exist $u_{0},\,g_{0}\in \mathbb{R}$ such that $g(t,u_{0})\leq g_{0}$ for a.e.~$t\in \mathopen{[}0,T\mathclose{]}$.}
\item[$(H^{\rm{I}}_{3})$]
\textit{There exist $\sigma>\max\{0,g_{0}\}$ and $d > 0$ such that $\tfrac{1}{T}\int_{0}^{T} g(t,u(t))\,dt > \sigma$
for each $u\in \mathcal{C}^{1}_{T}$ such that $u(t) \leq - d$ for all $t\in \mathopen{[}0,T\mathclose{]}$.}
\item[$(H^{\rm{I}}_{4})$]
\textit{There exist $\sigma>\max\{0,g_{0}\}$ and $d > 0$ such that $\tfrac{1}{T}\int_{0}^{T} g(t,u(t))\,dt > \sigma$
for each $u\in \mathcal{C}^{1}_{T}$ such that $u(t) \geq d$ for all $t\in \mathopen{[}0,T\mathclose{]}$.}
\end{itemize}

We are now in position to state and prove our first main result.

\begin{theorem}\label{th-3.1}
Assume $(H_{0})$, $(H^{\rm{I}}_{1})$, $(H^{\rm{I}}_{2})$ and $(H^{\rm{I}}_{3})$ and let
\begin{equation}\label{sigma*}
\sigma^{*} := \sup \bigl{\{}\sigma\in\mathopen{]}g_{0},+\infty\mathclose{[} \colon \text{$(H^{\rm{I}}_{3})$ is satisfied\,}\bigr{\}}.
\end{equation}
Then, there exists $s_{0}\in \mathopen{]}-\infty,\sigma^{*}\mathclose{[}$ such that:
\begin{itemize}
\item for $s_{0} < s < \sigma^{*}$, equation $(\mathscr{E}_{s})$ has at least one $T$-periodic solution;
\item for $s < s_{0}$, equation $(\mathscr{E}_{s})$ has no $T$-periodic solutions.
\end{itemize}
Moreover, if $(H^{\rm{I}}_{4})$ holds, then for
\begin{equation*}
\sigma^{**} := \sup \bigl{\{}\sigma\in\mathopen{]}g_{0},\sigma^{*}\mathclose{]} \colon \text{$(H^{\rm{I}}_{4})$ is satisfied\,}\bigr{\}}
\end{equation*}
it follows that:
\begin{itemize}
\item for $s =s_{0}$, equation $(\mathscr{E}_{s})$ has at least one $T$-periodic
solution;
\item for $s_{0} < s < \sigma^{**}$, equation $(\mathscr{E}_{s})$ has at least two $T$-periodic solutions.
\end{itemize}
\end{theorem}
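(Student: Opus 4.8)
The plan is to recast $(\mathscr{E}_{s})$ as equation \eqref{eq-2.1} with $h(t,u)=h_{s}(t,u):=g(t,u)-s$; by $(H_{0})$ and $(H^{\rm{I}}_{1})$ every $h_{s}$ satisfies $(A_{0})$ and $(A^{\rm{I}}_{1})$ (with $\gamma_{0}+|s|$ in place of $\gamma$). Set $S:=\{s\in\mathbb{R}:(\mathscr{E}_{s})\text{ has a }T\text{-periodic solution}\}$. Integrating $(\mathscr{E}_{s})$ over $\mathopen{[}0,T\mathclose{]}$ gives $sT=\int_{0}^{T}g(t,u(t))\,dt\geq-\|\gamma_{0}\|_{L^{1}}$, so $S$ is bounded from below. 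For $g_{0}<s<\sigma^{*}$ the constant function $u_{0}$ from $(H^{\rm{I}}_{2})$ satisfies $(\phi(u_{0}'))'+f(u_{0})u_{0}'+h_{s}(t,u_{0})=g(t,u_{0})-s\leq g_{0}-s<0$ a.e., hence it is a strict upper solution to $(\mathscr{E}_{s})$ by Lemma~\ref{lem-strictupper}; and for $s<\sigma^{*}$ the Villari condition at $-\infty$ with $\delta=1$ holds for $h_{s}$ (apply $(H^{\rm{I}}_{3})$ with $\sigma\in\mathopen{]}s,\sigma^{*}\mathclose{[}$, so that $\int_{0}^{T}h_{s}(t,u(t))\,dt\geq(\sigma-s)T>0$ for $u(t)\leq-d$). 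Theorem~\ref{th-deg} then yields $s\in S$, so $\mathopen{]}g_{0},\sigma^{*}\mathclose{[}\subseteq S\neq\emptyset$, and one may set $s_{0}:=\inf S$, which is finite and belongs to $\mathopen{]}-\infty,g_{0}\mathclose{]}\subset\mathopen{]}-\infty,\sigma^{*}\mathclose{[}$.

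Next I would prove the monotonicity of $S$ below $\sigma^{*}$: if $\bar{s}\in S$, the function $u$ solves $(\mathscr{E}_{\bar{s}})$ and $\bar{s}<s<\sigma^{*}$, then $(\phi(u'))'+f(u)u'+h_{s}(t,u)=\bar{s}-s<0$ a.e., so by Lemma~\ref{lem-strictupper} $u$ is a strict upper solution to $(\mathscr{E}_{s})$; since $h_{s}$ satisfies $(A^{\rm{I}}_{1})$ and the Villari condition at $-\infty$, Theorem~\ref{th-deg} gives a $T$-periodic solution $\tilde{u}<u$ of $(\mathscr{E}_{s})$ together with $\mathrm{d}_{\mathrm{LS}}(Id-\mathcal{G},\Omega^{\rm{I}}(R,u,K),0)=-1$ for all $R,K$ large. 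Thus, for $s_{0}<s<\sigma^{*}$ one picks $\bar{s}\in S$ with $\bar{s}<s$ (possible since $s>\inf S$) and obtains $s\in S$, whereas for $s<s_{0}$ there is no solution by the definition of $s_{0}$. This proves the first part of the theorem.

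Assume now $(H^{\rm{I}}_{4})$, so that $\sigma^{**}\in\mathopen{]}g_{0},\sigma^{*}\mathclose{]}$ and $s_{0}\leq g_{0}<\sigma^{**}$. For $s=s_{0}$ I would use a compactness argument: take $s_{n}\downarrow s_{0}$ with $s_{n}\in\mathopen{]}s_{0},\sigma^{**}\mathclose{[}\subseteq S$ and corresponding solutions $u_{n}$ of $(\mathscr{E}_{s_{n}})$. As $(A^{\rm{I}}_{1})$ holds uniformly in $n$, Lemma~\ref{lem-max-min} gives $\max u_{n}-\min u_{n}\leq K_{0}$ with $K_{0}$ independent of $n$; combining $\tfrac{1}{T}\int_{0}^{T}g(t,u_{n}(t))\,dt=s_{n}$ with $(H^{\rm{I}}_{3})$ forces $\max u_{n}>-d$ and with $(H^{\rm{I}}_{4})$ forces $\min u_{n}<d'$, for $n$ large (using $s_{n}<\sigma^{*}$, resp. $s_{n}<\sigma^{**}$, eventually), whence $\{u_{n}\}$ is uniformly bounded and then $\|u_{n}'\|_{\infty}$ is bounded by Lemma~\ref{lem-max-min}. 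An application of Ascoli--Arzel\`{a}, followed by a standard passage to the limit (Carath\'eodory property of $g$, dominated convergence, continuity of $\phi^{-1}$), produces a subsequence converging in $\mathcal{C}^{1}_{T}$ to a $T$-periodic solution of $(\mathscr{E}_{s_{0}})$, hence $s_{0}\in S$.

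It remains, for $s_{0}<s<\sigma^{**}$, to exhibit a \emph{second} solution, and this is the main obstacle. Choose $\bar{s}\in S$ with $\bar{s}<s$, let $u$ solve $(\mathscr{E}_{\bar{s}})$; by the monotonicity step $(\mathscr{E}_{s})$ has a solution $\tilde{u}_{1}\in\Omega_{1}:=\Omega^{\rm{I}}(R_{1},u,K_{1})$ and $\mathrm{d}_{\mathrm{LS}}(Id-\mathcal{G},\Omega_{1},0)=-1$ for suitably large $R_{1},K_{1}$. On the other hand, Lemma~\ref{lem-max-min} and Lemma~\ref{lem-2.3}, combined with $(H^{\rm{I}}_{3})$, $(H^{\rm{I}}_{4})$ and $s<\sigma^{**}\leq\sigma^{*}$, yield a priori bounds, \emph{uniform in} $\lambda\in\mathopen{]}0,1\mathclose{]}$, for the solutions of the homotopic problem \eqref{pb-lambda} attached to $(\mathscr{E}_{s})$; hence on a large ball $\Omega_{R}:=\{v\in\mathcal{C}^{1}_{T}:\|v\|_{\infty}<R,\ \|v'\|_{\infty}<K\}$ Theorem~\ref{MaMa-th} applies and gives $\mathrm{d}_{\mathrm{LS}}(Id-\mathcal{G},\Omega_{R},0)=\mathrm{d}_{\mathrm{B}}(h_{s}^{\#},\mathopen{]}-R,R\mathclose{[},0)$ with $h_{s}^{\#}(\xi)=\tfrac{1}{T}\int_{0}^{T}g(t,\xi)\,dt-s$. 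Evaluating at the constants $\mp R$ for $R$ large, $h_{s}^{\#}(-R)>0$ by $(H^{\rm{I}}_{3})$ and $h_{s}^{\#}(R)>0$ by $(H^{\rm{I}}_{4})$, so the one-dimensional Brouwer degree vanishes and $\mathrm{d}_{\mathrm{LS}}(Id-\mathcal{G},\Omega_{R},0)=0$. Taking $R,K$ so large that $\overline{\Omega_{1}}\subset\Omega_{R}$, and noting that no solution of $(\mathscr{E}_{s})$ lies on $\partial\Omega_{1}$ (by the a priori bounds of Theorem~\ref{th-deg} and since $u$ is a \emph{strict} upper solution), the additivity and excision properties of the Leray--Schauder degree give
\[
\mathrm{d}_{\mathrm{LS}}(Id-\mathcal{G},\Omega_{R}\setminus\overline{\Omega_{1}},0)=0-(-1)=1\neq0,
\]
so there is a $T$-periodic solution $\tilde{u}_{2}$ of $(\mathscr{E}_{s})$ with $\tilde{u}_{2}(t_{0})>u(t_{0})$ for some $t_{0}$, hence $\tilde{u}_{2}\neq\tilde{u}_{1}$. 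The delicate points are exactly the uniformity in $\lambda$ of these a priori bounds, the fact that $\partial\Omega_{1}$ carries no solution, and the sign bookkeeping behind $\mathrm{d}_{\mathrm{B}}(h_{s}^{\#},\cdot,0)=0$; everything else is an assembly of Theorem~\ref{th-deg}, Theorem~\ref{MaMa-th} and the lemmas of Section~\ref{section-2}.
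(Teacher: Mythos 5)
Your proposal is correct and follows the same overall architecture as the paper's proof: the set of parameters with solutions is shown to be an interval by promoting a solution of $(\mathscr{E}_{\bar{s}})$ to a strict upper solution of $(\mathscr{E}_{s})$ for $\bar{s}<s<\sigma^{*}$ and invoking the Villari condition at $-\infty$ through Theorem~\ref{th-deg}; the second solution comes from a degree~$0$ versus degree~$-1$ comparison; and $s=s_{0}$ is handled by Ascoli--Arzel\`{a}. The one genuinely different step is how you get $\mathrm{d}_{\mathrm{LS}}(Id-\mathcal{G},\Omega_{R},0)=0$ on the large set: the paper homotopes the parameter $s$ down to some $\sigma_{2}<s^{\#}$, where $(H^{\rm{I}}_{1})$ excludes all solutions, and uses homotopy invariance in $s$ at $\lambda=1$; you instead apply Theorem~\ref{MaMa-th} directly and observe that $(H^{\rm{I}}_{3})$ and $(H^{\rm{I}}_{4})$ (with $\sigma>s$, available because $s<\sigma^{**}\leq\sigma^{*}$) force $h_{s}^{\#}(\pm R)>0$, so the one-dimensional Brouwer degree vanishes. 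Both routes rest on the same a priori bounds from Lemma~\ref{lem-2.3} and Lemma~\ref{lem-max-min} (uniform in $s\in\mathopen{[}\sigma_{2},\sigma_{1}\mathclose{]}$ for the paper, uniform in $\lambda\in\mathopen{]}0,1\mathclose{]}$ for you); yours is slightly more self-contained, while the paper's version of this step uses only $(H^{\rm{I}}_{1})$ rather than $(H^{\rm{I}}_{4})$ --- immaterial here, since the multiplicity conclusion is asserted only under $(H^{\rm{I}}_{4})$. The points you flag as delicate (no solutions on $\partial\Omega_{1}$ via strictness of the upper solution and the a priori bounds, and the sign bookkeeping) are all resolved exactly as you indicate, so I see no gap.
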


\begin{proof}
We split the proof into two steps.
In the first one, we prove that there are no solutions to $(\mathscr{E}_{s})$
if the parameter $s$ is sufficiently small. Moreover, we show that the set of the parameters $s$
for which $(\mathscr{E}_{s})$ has at least one $T$-periodic solution is an interval.
In the second one, we discuss the existence and the multiplicity of solutions to $(\mathscr{E}_{s})$ in dependence of the parameter $s$.
We follow the approach in \cite{FMN-86}, by adapting also some arguments from \cite{SoZa-18}.
We consider $h_{s}(t,u):=g(t,u)-s$ to deal with
an equation of the form \eqref{eq-2.1}.

\smallskip
\noindent\textit{Step~1.} If $u$ is a $T$-periodic solution to $(\mathscr{E}_{s})$, then we have $\tfrac{1}{T}\int_{0}^{T} g(t,u(t))\,dt = s$, taking the average of the equation on $\mathopen{[}0,T\mathclose{]}$. Hence, from condition $(H^{\rm{I}}_{1})$, it follows that equation $(\mathscr{E}_{s})$ has no $T$-periodic solutions for
\begin{equation}\label{no-sol}
s < s^{\#}:= - \dfrac{1}{T}\int_{0}^{T} \gamma_{0}(t)\,dt.
\end{equation}

It is worth noting that assumption $(H_{0})$ implies that
the function $h_{s}$ satisfies $(A_{0})$. For each $s > g_{0}$, the constant function $\beta\equiv u_{0}$ is a strict upper solution to $(\mathscr{E}_{s})$. Indeed, we observe that
\begin{equation*}
(\phi(\beta'(t)))' + f(\beta(t))\beta'(t) + h_{s}(t,\beta(t))
= g(t,u_{0}) - s \leq g_{0} - s < 0
\end{equation*}
and so, by Lemma~\ref{lem-strictupper} we have the claim.

Let $\sigma_{1}$ satisfying assumption $(H^{\rm{I}}_{3})$ so that the Villari's condition at $-\infty$ with $\delta =1$ holds. Therefore, we are in position to apply Theorem~\ref{th-deg} and we obtain the existence of at least one $T$-periodic solution $u$ of $(\mathscr{E}_{s})$ for $s=\sigma_{1}$ with $u<u_{0}$.

We claim now that if $w$ is a $T$-periodic solution to $(\mathscr{E}_{s})$ for some $s=\tilde{\sigma} < \sigma_{1}$, then $(\mathscr{E}_{s})$ has a $T$-periodic solution for each $s\in \mathopen{[}\tilde{\sigma},\sigma_{1}\mathclose{]}$. Indeed, let $s\in \mathopen{]}\tilde{\sigma},\sigma_{1}\mathclose{[}$, then by applying Lemma~\ref{lem-strictupper}, we notice that $w$ is a strict upper solution to $(\mathscr{E}_{s})$,
since
\begin{equation*}
(\phi(w'(t)))' + f(w(t))w'(t) + g(t,w(t)) - s= \tilde{\sigma}-s < 0.
\end{equation*}
Moreover, as observed above, for $\sigma \in \mathopen{[}\tilde{\sigma},\sigma_{1}\mathclose{]}$, assumption $(H^{\rm{I}}_{3})$ implies again the Villari's condition at $-\infty$ with $\delta =1$. In this manner, by Theorem~\ref{th-deg} there exists at least one $T$-periodic solution $u$ of $(\mathscr{E}_{s})$ for $s=\sigma$ with $u<w$.

Recalling \eqref{no-sol}, we have deduced that the set of the parameters $s\leq\sigma_1$ for which equation $(\mathscr{E}_{s})$
has $T$-periodic solutions is an interval bounded from below (by $s^{\#}$). Let
\begin{equation*}
s_{0}:= \inf \bigl{\{} s\in\mathbb{R} \colon \text{$(\mathscr{E}_{s})$ has at least one $T$-periodic solution} \bigr{\}}.
\end{equation*}
By the arbitrary choice of $\sigma_{1}$ and the definition of $\sigma^{*}$, we conclude that there exists at least a $T$-periodic solution to $(\mathscr{E}_{s})$ for each $s\in\mathopen{]}s_{0},\sigma^{*}\mathclose{[}$.

\smallskip
\noindent\textit{Step~2.} Let $N_s$ the Nemytskii operator associated with $f(u)u'+h_{s}(t,u)$, namely $(N_{s}u)(t):= f(u(t))u'(t)+ h_{s}(t,u(t))$, $u\in X$.
Defining
\begin{equation*}
\mathcal{G}_{\lambda,s}u:=Pu+ \lambda QN_{s}u + \lambda \mathcal{K}N_{s}u, \quad u\in X, \; \lambda\in\mathopen{]}0,1\mathclose{]}.
\end{equation*}
we obtain that problem \eqref{pb-lambda} is equivalent to $u=\mathcal{G}_{\lambda,s}u$.

Let $\sigma_{1}$ satisfy assumptions $(H^{\rm{I}}_{3})$ and $(H^{\rm{I}}_{4})$. We claim that there exists a positive constant
$\Lambda =\Lambda(\sigma_{1})$ such that for each $s \leq \sigma_{1}$ any solution of
$u=\mathcal{G}_{\lambda,s}u$, with $0 < \lambda \leq 1$, satisfies $\|u\|_{\infty} < \Lambda$.

An application of Lemma~\ref{lem-2.3} ensures that
$\max u > - d$ and $\min u < d$,
for any possible $T$-periodic solution $u$ to
\begin{equation}\label{eq-3.2}
(\phi(u'(t)))' +\lambda f(u(t))u'(t) + \lambda h_{s}(t,u(t)) = 0, \quad \lambda\in\mathopen{]}0,1\mathclose{]}.
\end{equation}
Moreover, by $(H^{\rm{I}}_{1})$ and $s \leq \sigma_{1}$, it follows that
$h_s(t,u) \geq -\gamma_{0}(t) - \sigma_{1}$, for a.e.~$t\in \mathopen{[}0,T\mathclose{]}$.
Now we apply Lemma~\ref{lem-max-min} with $\gamma(t):= \gamma_{0}(t) + |\sigma_{1}|$,
and so there exists a positive constant $K_{0}=K_{0}(\sigma_{1})$
such that $\max u - \min u \leq K_{0}$,
for any possible $T$-periodic solution $u$ to \eqref{eq-3.2}.
Thus the claim follows, since by the above inequalities, we have that $\|u\|_{\infty} < \Lambda(\sigma_{1}):= K_{0}(\sigma_{1}) + d$.

Let us fix now a constant $\sigma_{2} < s^{\#}$. Let also $\rho_{g}$ be a non-negative $L^1$-function such that
$|h_s(t,u)| \leq \rho_{g}(t) + \max\{\sigma_{1},|\sigma_{2}|\}$, for a.e.~$t\in \mathopen{[}0,T\mathclose{]}$,
for all $s\in \mathopen{[}\sigma_{2},   \sigma_{1}\mathclose{]}$, for all $u\in \mathopen{[}-\Lambda(\sigma_{1}),\Lambda(\sigma_{1})\mathclose{]}$.
From Lemma~\ref{lem-max-min} there exists a constant $K_{1}=K_{1}(\sigma_{1},\sigma_{2}) > 0$
such that, for each $s\in \mathopen{[}\sigma_{2},\sigma_{1}\mathclose{]}$,
any solution of $u=\mathcal{G}_{\lambda,s}u$ with $0 < \lambda \leq 1$ satisfies
$\|u'\|_{\infty} < K_{1}$.

By considering the homotopic parameter $s\in \mathopen{[}\sigma_{2},\sigma_{1}\mathclose{]}$ and defining
\begin{equation*}
\Omega_{1}=\Omega_{1}(R_{0},R_{1}):= \bigl{\{}u\in \mathcal{C}^{1}_{T} \colon \|u\|_{\infty} < R_{0},\, \|u'\|_{\infty} < R_{1}\bigr{\}},
\end{equation*}
we obtain that
\begin{equation}\label{deg0}
\mathrm{d}_{\mathrm{LS}}(Id-\mathcal{G}_{1,\sigma_{1}},\Omega_{1},0) = \mathrm{d}_{\mathrm{LS}}(Id-\mathcal{G}_{1,\sigma_{2}},\Omega_{1},0) = 0, \quad \forall\, R_{0} \geq \Lambda(\sigma_{1}), \, \forall\, R_{1}\geq K_{1}.
\end{equation}

From the conclusions achieved in Step~1, $(\mathscr{E}_{s})$ has a $T$-periodic solution for every $s\in\mathopen{]}s_{0},\sigma^{**}\mathclose{[}\subseteq\mathopen{]}s_{0},\sigma^{*}\mathclose{[}$.

Let $\tilde{u}_{1}$ be a $T$-periodic solution to $(\mathscr{E}_{s})$ for some $s=\tilde{\sigma}_{1}\in\mathopen{]}s_{0},\sigma^{**}\mathclose{[}$.
Let us fix $s\in\mathopen{]}\tilde{\sigma}_{1},\sigma^{**}\mathclose{[}$ and claim that a second solution to $(\mathscr{E}_{s})$ exists.
Clearly, since $s>\tilde{\sigma}_{1}$, it follows that $\tilde{u}_{1}$ is a strict upper solution to $(\mathscr{E}_{s})$.

By the validity of the Villari's condition at $-\infty$ with $\delta =1$ and an application of Theorem~\ref{th-deg} we have
\begin{equation}\label{deg-1}
\mathrm{d}_{\mathrm{LS}}(Id-\mathcal{G}_{1,s},\Omega^{\rm{I}}(R_{0},w,R_{1}),0) = -1,
\end{equation}
where $R_{0} \geq \Lambda(\sigma_{1}) +1$ and $R_{1} \geq K_{1}$.

Now, from \eqref{deg0}, \eqref{deg-1} and $\Omega^{\rm{I}}(R_{0},w,R_{1}) \subseteq \Omega_{1}$, we obtain that there exists also a second solution to $(\mathscr{E}_{s})$ contained in $\Omega_{1}\setminus\overline{\Omega^{\rm{I}}(R_{0},w,R_{1})}$, via the additivity property of the topological degree.

We conclude the proof by showing that for $s=s_{0}$ there is at least one $T$-periodic solution.

Let us fix $\sigma_{1},\sigma_{2}$ with $\sigma_{2} < s_{0} < \sigma_{1} < \sigma^{**}$. Let $(s_{n})_{n}\subseteq \mathopen{]}s_{0},\sigma_{1}\mathclose{]}$ be a decreasing sequence with $s_{n}\to s_{0}$. By the above estimates, for each $n$ there exists at least one $T$-periodic solution $w_{n}$ to
\begin{equation*}
(\phi(w_{n}'(t)))' + f(w_{n}(t))w_{n}'(t) + g(t,w_{n}(t)) = s_{n}
\end{equation*}
with $\|w_{n}\|_{\infty} \leq \Lambda(\sigma_{1})$ and $\|w'_{n}\|_{\infty} \leq K_{1}(\sigma_{1},\sigma_{2})$.
Passing to the limit as $n\to \infty$ and applying Ascoli--Arzel\`{a} theorem, we achieve the existence of at least
one $T$-periodic solution to $(\mathscr{E}_{s})$ for $s=s_{0}$, concluding the proof.
\end{proof}

The following hypotheses will be assumed in the next result.
\begin{itemize}
\item[$(H^{\rm{II}}_{1})$]
\textit{There exists $\gamma_{0}\in L^{1}(\mathopen{[}0,T\mathclose{]},\mathbb{R}^{+})$ such that $g(t,u) \leq \gamma_{0}(t)$, for all $u\in\mathbb{R}$ and a.e.~$t\in\mathopen{[}0,T\mathclose{]}$.}
\item[$(H^{\rm{II}}_{2})$]
\textit{There exist $u_{0},\,g_{0}\in \mathbb{R}$ such that $g(t,u_{0})\geq g_{0}$ for a.e.~$t\in \mathopen{[}0,T\mathclose{]}$.}
\item[$(H^{\rm{II}}_{3})$]
\textit{There exist $\nu<\min\{0,g_{0}\}$ and $d > 0$ such that $\tfrac{1}{T}\int_{0}^{T} g(t,u(t))\,dt < \nu$
for each $u\in \mathcal{C}^{1}_{T}$ such that $u(t) \leq - d$ for all $t\in \mathopen{[}0,T\mathclose{]}$.}
\item[$(H^{\rm{II}}_{4})$]
\textit{There exist $\nu<\min\{0,g_{0}\}$ and $d > 0$ such that $\tfrac{1}{T}\int_{0}^{T} g(t,u(t))\,dt < \nu$
for each $u\in \mathcal{C}^{1}_{T}$ such that $u(t) \geq d$ for all $t\in \mathopen{[}0,T\mathclose{]}$.}
\end{itemize}

Our second main result is the following, which can be viewed as a ``dual'' version of Theorem~\ref{th-3.1}.

\begin{theorem}\label{th-3.2}
Assume $(H_{0})$, $(H^{\rm{II}}_{1})$, $(H^{\rm{II}}_{2})$ and $(H^{\rm{II}}_{4})$ and let
\begin{equation*}
\nu^{*} := \inf \bigl{\{}\nu\in\mathopen{]}-\infty,g_{0}\mathclose{[} \colon \text{$(H^{\rm{II}}_{4})$ is satisfied\,}\bigr{\}}.
\end{equation*}
Then, there exists $s_{0}\in \mathopen{]}\nu^{*},+\infty\mathclose{[}$ such that:
\begin{itemize}
\item for $\nu^{*}<s<s_{0}$, equation $(\mathscr{E}_{s})$ has at least one $T$-periodic solution;
\item for $s>s_{0}$, equation $(\mathscr{E}_{s})$ has no $T$-periodic solutions.
\end{itemize}
Moreover, if $(H^{\rm{II}}_{3})$ holds, then for
\begin{equation*}
\nu^{**} := \sup \bigl{\{}\nu\in\mathopen{[}\nu^{*},g_{0}\mathclose{[} \colon \text{$(H^{\rm{II}}_{3})$ is satisfied\,}\bigr{\}}
\end{equation*}
it follows that:
\begin{itemize}
\item for $s =s_{0}$, equation $(\mathscr{E}_{s})$ has at least one $T$-periodic
solution;
\item for $\nu^{**}<s<s_{0}$, equation $(\mathscr{E}_{s})$ has at least two $T$-periodic solutions.
\end{itemize}
\end{theorem}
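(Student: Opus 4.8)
The plan is to derive Theorem~\ref{th-3.2} from Theorem~\ref{th-3.1} by means of the reflection $x:=-u$, the very change of variable already used in the proof of Lemma~\ref{lem-max-min-2}. Put $\tilde{\phi}(\xi):=-\phi(-\xi)$, $\tilde{f}(\xi):=f(-\xi)$, $\tilde{g}(t,\xi):=-g(t,-\xi)$ and $\hat{s}:=-s$. Then $\tilde{\phi}$ is again an increasing homeomorphism of $\mathbb{R}$ with $\tilde{\phi}(0)=0$, $\tilde{f}$ is continuous, $\tilde{g}$ is Carath\'eodory, and a direct substitution shows that $u$ is a $T$-periodic solution of $(\mathscr{E}_{s})$ relative to the data $(\phi,f,g)$ if and only if $x=-u$ is a $T$-periodic solution of $(\tilde{\phi}(x'))'+\tilde{f}(x)x'+\tilde{g}(t,x)=\hat{s}$ relative to the data $(\tilde{\phi},\tilde{f},\tilde{g})$. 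Since $u\mapsto -u$ is a bijection of $\mathcal{C}^{1}_{T}$ (carrying $\mathfrak{D}$ onto itself), it preserves the number of $T$-periodic solutions corresponding to each value of the parameter.

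The next step is to check that the hypotheses dualize into those of Theorem~\ref{th-3.1}. Condition $(H_{0})$ is invariant; $(H^{\rm{II}}_{1})$ turns into $(H^{\rm{I}}_{1})$ with the same function $\gamma_{0}$; and $(H^{\rm{II}}_{2})$ turns into $(H^{\rm{I}}_{2})$ with $\tilde{u}_{0}:=-u_{0}$, $\tilde{g}_{0}:=-g_{0}$, so that $\max\{0,\tilde{g}_{0}\}=-\min\{0,g_{0}\}$ and the admissibility constraints on the thresholds match. Because ``$u(t)\geq d$ for every $t$'' becomes ``$x(t)\leq -d$ for every $t$'' and $\tfrac{1}{T}\int_{0}^{T}g(t,u(t))\,dt=-\tfrac{1}{T}\int_{0}^{T}\tilde{g}(t,x(t))\,dt$, the behaviour-at-$+\infty$ condition $(H^{\rm{II}}_{4})$ with constant $\nu$ becomes the behaviour-at-$-\infty$ condition $(H^{\rm{I}}_{3})$ with constant $\tilde{\sigma}=-\nu$; symmetrically, $(H^{\rm{II}}_{3})$ becomes $(H^{\rm{I}}_{4})$. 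Taking suprema over the admissible thresholds, the constant $\sigma^{*}$ of \eqref{sigma*} attached to $\tilde{g}$ equals $-\nu^{*}$, while the constant $\sigma^{**}$ attached to $\tilde{g}$ satisfies $-\sigma^{**}\leq\nu^{**}$ (an $\inf$/$\sup$ comparison over the same set of admissible $\nu$).

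Now I would simply invoke Theorem~\ref{th-3.1} for the data $(\tilde{\phi},\tilde{f},\tilde{g})$: it yields a value $\tilde{s}_{0}\in\mathopen{]}-\infty,-\nu^{*}\mathclose{[}$ such that the transformed equation has at least one $T$-periodic solution for $\tilde{s}_{0}<\hat{s}<-\nu^{*}$, none for $\hat{s}<\tilde{s}_{0}$, at least one for $\hat{s}=\tilde{s}_{0}$, and at least two for $\tilde{s}_{0}<\hat{s}<\sigma^{**}$. Setting $s_{0}:=-\tilde{s}_{0}\in\mathopen{]}\nu^{*},+\infty\mathclose{[}$ and undoing the substitution $\hat{s}=-s$, these four assertions become, by the bijective correspondence $u\leftrightarrow -u$ between $T$-periodic solutions: at least one $T$-periodic solution of $(\mathscr{E}_{s})$ for $\nu^{*}<s<s_{0}$; none for $s>s_{0}$; at least one for $s=s_{0}$; and at least two for $-\sigma^{**}<s<s_{0}$. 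Since $-\sigma^{**}\leq\nu^{**}$, we have $\mathopen{]}\nu^{**},s_{0}\mathclose{[}\subseteq\mathopen{]}-\sigma^{**},s_{0}\mathclose{[}$, and the statement follows.

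I expect the only delicate point to be the bookkeeping of the reflection: tracking the sign changes in $\phi$, $f$, $g$, $s$ consistently, and, most of all, checking that the parameters $\nu^{*}$, $\nu^{**}$, $s_{0}$ appearing in Theorem~\ref{th-3.2} are exactly the images under $\nu\mapsto-\nu$ (resp.\ $\tilde{s}_{0}\mapsto-\tilde{s}_{0}$) of $\sigma^{*}$, $\sigma^{**}$, $s_{0}$ furnished by Theorem~\ref{th-3.1}, including the matching of the constraints $\sigma>\max\{0,g_{0}\}$ and $\nu<\min\{0,g_{0}\}$. No genuinely new estimate is required. As an alternative, one could rerun the proof of Theorem~\ref{th-3.1} verbatim with all inequalities reversed, using Theorem~\ref{th-deg-dual} and Lemma~\ref{lem-max-min-2} in place of Theorem~\ref{th-deg} and Lemma~\ref{lem-max-min}; the reflection route simply avoids this repetition.
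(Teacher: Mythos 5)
Your proposal is correct and follows exactly the route of the paper's own proof: the reflection $x:=-u$ with $\tilde{\phi}(\xi)=-\phi(-\xi)$, $\tilde{f}(\xi)=f(-\xi)$, $\tilde{g}(t,\xi)=-g(t,-\xi)$ and parameter $-s$, followed by an application of Theorem~\ref{th-3.1} and translation back via $s_{0}:=-\tilde{s}_{0}$, $\sigma^{*}=-\nu^{*}$. Your more cautious bookkeeping of the relation between $\sigma^{**}$ and $\nu^{**}$ (an inequality in the direction that still yields the stated two-solution range) is, if anything, slightly more careful than the paper's terse ``$\nu^{**}=-\sigma^{**}$''.
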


\begin{proof}
As in the proof of Lemma~\ref{lem-max-min-2}, the change of variable $x:=-u$ transforms $(\mathscr{E}_{s})$ to
\begin{equation}\label{eq-transf}
(\tilde{\phi}(x'))'+ \tilde{f}(x)x'+ \tilde{g}(t,x) = -s,
\end{equation}
where $\tilde{\phi}(\xi) = -\phi(-\xi)$, $\tilde{f}(\xi) = f(-\xi)$, and $\tilde{g}(t,\xi) = - g(t,-\xi)$.
Then we apply Theorem~\ref{th-3.1} to the $T$-periodic problem associated with \eqref{eq-transf}.
Precisely, there exists $\tilde{s}_{0}\in \mathopen{]}-\infty,\sigma^{*}\mathclose{[}$
such that equation \eqref{eq-transf} has: no $T$-periodic solutions, for $s < \tilde{s}_{0}$;
at least one $T$-periodic solution, for $s =\tilde{s}_{0}$; at least one $T$-periodic solution,
for $\tilde{s}_{0} < s < \sigma^{*}$; at least two $T$-periodic solutions, for $\tilde{s}_{0} < s < \sigma^{**}\leq\sigma^{*}$.
Defining $s_{0}:=-\tilde{s}_{0}$ and observing that $\nu^{*}=-\sigma^{*}$ and $\nu^{**}=-\sigma^{**}$, the thesis follows.
\end{proof}

\begin{remark}\label{rem-3.1}
We stress that conditions $(H^{\rm{I}}_{2})$ and $(H^{\rm{II}}_{2})$ ensure the existence of a strict upper/lower solution to $(\mathscr{E}_{s})$,
respectively, which is given by the constant function $u_{0}$. In place of those conditions, one can assume the following.
\begin{itemize}
\item[$(H^{\bigstar}_{2})$]
\textit{There exist a function $u_{0}\in\mathfrak{D}$ and $g_{0}\in \mathbb{R}$ such that
\begin{equation*}
(\phi(u_{0}'))' + f(u_{0}) u_{0}'+ g(t,u_{0}) = g_{0}.
\end{equation*}
}
\end{itemize}
Indeed, by assuming condition $(H^{\bigstar}_{2})$, we immediately have the existence of a $T$-periodic solution $u_{0}$ to $(\mathscr{E}_{s})$ for $s=g_{0}$, which in turns is a strict upper/lower to $(\mathscr{E}_{s})$ for $s>g_{0}$ and for $s<g_{0}$,
respectively.
$\hfill\lhd$
\end{remark}

We conclude the section by proving that $(H^{\bigstar}_{2})$ holds for semi-bounded nonlinearities $g$ (see Proposition~\ref{prop-3.2}).

As a first step, recalling the definitions of the Banach spaces $Z$ and $\mathfrak{D}$, and of the projector $Q$ given in Section~\ref{section-2.2}, we introduce the following subspaces
\begin{equation*}
\tilde{Z}:=\bigl{\{}w\in Z \colon Qz=0\},
\quad \tilde{\mathfrak{D}}:=\biggl{\{}u\in \mathfrak{D} \colon \int_{0}^{T}u(t)\,dt=0\biggr{\}}.
\end{equation*}
We state the following result, which is a minor variant of \cite[Lemma~2.1]{MaMa-98}, where the operator $\tilde{\mathcal{K}}$ in our context takes the form $\tilde{\mathcal{K}}=\mathcal{K}-Q\mathcal{K}$ (with the notation introduced in Section~\ref{section-2.2}).

\begin{lemma}\label{lem-3.1}
For every $w\in\tilde{Z}$ there exists unique $u\in\tilde{\mathfrak{D}}$ such that
\begin{equation}\label{eq-mm}
(\phi(u'))'=w.
\end{equation}
Furthermore, let $\tilde{\mathcal{K}}\colon\tilde{Z}\to\tilde{\mathfrak{D}}$ be the operator which associates to $w$ the unique solution $u$ to \eqref{eq-mm}. Then,
$\tilde{\mathcal{K}}$ is continuous, maps bounded sets on bounded sets, and sends equi-integrable sets into relatively compact sets.
\end{lemma}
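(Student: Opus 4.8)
The plan is to prove existence by a direct construction — solving $(\phi(u'))'=w$ by two integrations — and then uniqueness and the topological properties by the standard identification of $\tilde{\mathcal{K}}$ with $\mathcal{K}-Q\mathcal{K}$.

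For existence: given $w\in\tilde{Z}$, that is $\int_0^T w=0$, I would define $W(t):=\int_0^t w(\xi)\,d\xi$, so $W\in\mathfrak{AC}$ with $W(0)=W(T)=0$. Then any solution of $(\phi(u'))'=w$ must satisfy $\phi(u'(t))=W(t)+c$ for some constant $c\in\mathbb{R}$, i.e. $u'(t)=\phi^{-1}(W(t)+c)$. The $T$-periodicity condition $u'(0)=u'(T)$ is automatic since $W(0)=W(T)$. The condition $u(0)=u(T)$ then forces $\int_0^T\phi^{-1}(W(t)+c)\,dt=0$. The map $c\mapsto \Psi(c):=\int_0^T\phi^{-1}(W(t)+c)\,dt$ is continuous and strictly increasing (because $\phi^{-1}$ is a strictly increasing homeomorphism of $\mathbb{R}$), with $\Psi(c)\to\pm\infty$ as $c\to\pm\infty$; hence there is a unique $c$ with $\Psi(c)=0$. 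Finally I set $u(t):=\int_0^t\phi^{-1}(W(\xi)+c)\,d\xi-\frac{1}{T}\int_0^T\bigl(\int_0^t\phi^{-1}(W(\xi)+c)\,d\xi\bigr)\,dt$, which lies in $\mathfrak{D}$, satisfies $\int_0^T u=0$, hence $u\in\tilde{\mathfrak{D}}$, and solves \eqref{eq-mm}. Uniqueness in $\tilde{\mathfrak{D}}$: if $u_1,u_2$ both solve it, then $\phi(u_1')-\phi(u_2')$ is constant; integrating $u_1'-u_2'$ over $[0,T]$ gives $0$ by periodicity, and since $\phi$ is strictly monotone a constant difference of $\phi(u_i')$ with zero mean difference of $u_i'$ forces $u_1'\equiv u_2'$; then $\int_0^T(u_1-u_2)=0$ gives $u_1\equiv u_2$.

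For the operator-theoretic properties, the cleanest route is to invoke the relation $\tilde{\mathcal{K}}=\mathcal{K}-Q\mathcal{K}$ already recorded in the statement. Recall $\mathcal{K}\colon Z\to X$ from Section~\ref{section-2.2} is completely continuous (it is the fixed-point-operator component used in \cite{MaMa-98}) and, when restricted to $\tilde{Z}$, $\mathcal{K}w$ is precisely the unique $T$-periodic solution of $(\phi(u'))'=w$ with $u(0)=0$. Subtracting its mean value $Q\mathcal{K}w$ gives a function with zero integral, i.e. an element of $\tilde{\mathfrak{D}}$, still solving \eqref{eq-mm} since constants are killed by $(\phi(\cdot)')'$ — so $\tilde{\mathcal{K}}w=\mathcal{K}w-Q\mathcal{K}w$ is exactly the solution constructed above. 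Continuity and the bounded-to-bounded property are then inherited from those of $\mathcal{K}$ and of the bounded linear projector $Q$; that equi-integrable sets go to relatively compact sets is inherited from the corresponding compactness property of $\mathcal{K}$ (this is the content of \cite[Lemma~2.1]{MaMa-98}), composed with $Id-Q$ which is continuous on $X$.

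The main obstacle — really the only non-formal point — is establishing that $c\mapsto\Psi(c)$ is a strictly increasing surjection of $\mathbb{R}$, which underlies both existence and uniqueness of the integration constant. Strict monotonicity is immediate from strict monotonicity of $\phi^{-1}$; surjectivity requires a uniform-in-$t$ coercivity estimate, which follows because $W$ is bounded on $[0,T]$ (it is absolutely continuous on a compact interval) and $\phi^{-1}(\pm\infty)=\pm\infty$, so $\phi^{-1}(W(t)+c)\to\pm\infty$ uniformly in $t$ as $c\to\pm\infty$, whence $\Psi(c)\to\pm\infty$. Everything else is bookkeeping: checking the constructed $u$ lies in $\mathfrak{D}$ (here $\phi(u')=W+c\in\mathfrak{AC}$) and the transfer of the three regularity/compactness statements from $\mathcal{K}$ to $\tilde{\mathcal{K}}$ via $\tilde{\mathcal{K}}=\mathcal{K}-Q\mathcal{K}$.
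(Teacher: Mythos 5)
Your proof is correct. The paper gives no argument of its own for Lemma~\ref{lem-3.1} --- it simply cites \cite[Lemma~2.1]{MaMa-98} together with the identity $\tilde{\mathcal{K}}=\mathcal{K}-Q\mathcal{K}$ --- and your direct construction (two integrations, the strictly increasing surjection $\Psi(c)=\int_0^T\phi^{-1}(W(t)+c)\,dt$ pinning down the integration constant, the zero-mean normalization, and the transfer of continuity, boundedness and compactness from $\mathcal{K}$ through the continuous linear map $Id-Q$) is precisely the standard argument underlying that citation, so it matches the paper's approach in substance.
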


As a second step, for $u\in\mathfrak{D}$ let $\bar{u}:=\tfrac{1}{T}\int_{0}^{T}u(t)\,dt$. Then, we have that $u=\bar{u}+\tilde{u}$, with $\tilde{u}\in\tilde{\mathfrak{D}}$.
We deal now with the problem
\begin{equation}\label{eq-aam}
\begin{cases}
\,(\phi(\tilde{u}'))'=\lambda \biggl{(} F(t,\bar{u}+\tilde{u},\tilde{u}')- \dfrac{1}{T} \displaystyle \int_{0}^{T}F(\xi,\bar{u}+\tilde{u}(\xi),\tilde{u}'(\xi)) \,d\xi\biggr{)},\\
\,\tilde{u}\in\tilde{\mathfrak{D}}, \quad \lambda\in\mathopen{[}0,1\mathclose{]},
\end{cases}
\end{equation}
which can be equivalently written as a fixed point problem of the form
\begin{equation*}
\tilde{u}=\tilde{\mathcal{N}}(\bar{u},\tilde{u};\lambda):=\tilde{\mathcal{K}}(\lambda\mathcal{F}(\bar{u}+\tilde{u})),\quad \tilde{u}\in\tilde{\mathfrak{D}}, \, \lambda\in\mathopen{[}0,1\mathclose{]},
\end{equation*}
where $(\mathcal{F}u)(t)=F(t,u(t),u'(t))-QF(\cdot,u,u')(t)$, for $t\in\mathopen{[}0,T\mathclose{]}$.
In this setting the following result adapted from \cite{Om-88} holds (see also~\cite{AAM-78,FuPe-82,MRZ-00}).

\begin{lemma}\label{lem-3.2}
Suppose that there exists $\bar{u}_{0}\in\mathbb{R}$ such that for $\bar{u}=\bar{u}_{0}$ the set of solutions $\tilde{u}$ to \eqref{eq-aam} with $\lambda\in\mathopen{[}0,1\mathclose{]}$ is bounded. Moreover, assume that for any $M>0$ there exists $M'>0$ such that if $|\bar{u}|\leq M$ then $\|\tilde{u}\|_{\mathcal{C}^{1}}\leq M'$, where $(\bar{u},\tilde{u})$ is a solution pair to \eqref{eq-aam} for $\lambda=1$. Then, there exists a closed and connected set $\mathscr{C}\subseteq \mathbb{R}\times\mathcal{C}^{1}_{T}$ of solutions pairs $(\bar{u},\tilde{u})$ to \eqref{eq-aam} for $\lambda=1$ such that $\{\bar{u}\in\mathbb{R}\colon(\bar{u},\tilde{u})\in\mathscr{C}\}=\mathbb{R}$.
\end{lemma}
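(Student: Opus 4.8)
The plan is to recognize Lemma~\ref{lem-3.2} as an application of a global continuation / Leray--Schauder type connectedness principle for the parameter-dependent fixed point problem $\tilde{u}=\tilde{\mathcal{N}}(\bar{u},\tilde{u};\lambda)$ in the Banach space $\tilde{\mathfrak{D}}$ (equipped with the $\mathcal{C}^1$-norm), treating $\bar{u}\in\mathbb{R}$ as a second parameter. First I would observe that, for each fixed $\bar{u}$, the operator $\tilde{u}\mapsto\tilde{\mathcal{K}}(\mathcal{F}(\bar{u}+\tilde{u}))$ is completely continuous: this follows from Lemma~\ref{lem-3.1} (continuity of $\tilde{\mathcal{K}}$ and its sending of equi-integrable sets into relatively compact ones) together with the Carath\'eodory property of $F$, which makes $\mathcal{F}$ continuous and maps bounded sets to equi-integrable ones. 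Since $\tilde{\mathcal{N}}(\bar{u},\tilde{u};0)=0$, the problem at $\lambda=0$ has the unique solution $\tilde{u}=0$ for every $\bar{u}$, so the Leray--Schauder degree of $Id-\tilde{\mathcal{N}}(\bar{u},\cdot;0)$ on any ball is $1$.

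Next I would fix $\bar{u}=\bar{u}_0$ and use the hypothesis that the set of solutions $\tilde{u}$ to \eqref{eq-aam} with $\lambda\in\mathopen{[}0,1\mathclose{]}$ is bounded; choosing an open ball $B\subseteq\tilde{\mathfrak{D}}$ containing all of them and avoiding solutions on $\partial B$ for all $\lambda$, the homotopy invariance of the degree gives $\mathrm{d}_{\mathrm{LS}}(Id-\tilde{\mathcal{N}}(\bar{u}_0,\cdot;1),B,0)=1\neq 0$. Hence $(\bar{u}_0,\tilde{u})$ lies in the solution set of \eqref{eq-aam} at $\lambda=1$ for some $\tilde{u}\in B$. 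Then I would invoke a global bifurcation / continuation theorem of Leray--Schauder--Rabinowitz type (this is exactly what is ``adapted from \cite{Om-88}'', cf.\ also \cite{AAM-78,FuPe-82,MRZ-00}): the nonempty solution set $\mathscr{S}=\{(\bar{u},\tilde{u})\colon \tilde{u}=\tilde{\mathcal{N}}(\bar{u},\tilde{u};1)\}$, viewed inside $\mathbb{R}\times\mathcal{C}^1_T$, possesses a connected component $\mathscr{C}$ through $(\bar{u}_0,\tilde u)$ that is unbounded in the $\bar{u}$-direction in \emph{both} senses, and is closed because the operator is completely continuous. The a~priori bound hypothesis — for each $M>0$ a bound $M'$ on $\|\tilde{u}\|_{\mathcal{C}^1}$ once $|\bar{u}|\leq M$ — is precisely what prevents $\mathscr{C}$ from escaping to infinity in the $\tilde{u}$-direction while $\bar{u}$ stays bounded; combined with the non-triviality of the degree this forces $\{\bar{u}\colon(\bar{u},\tilde{u})\in\mathscr{C}\}$ to be all of $\mathbb{R}$.

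The cleanest way to carry this out is to set up a one-parameter family: consider, for $\mu\in\mathbb{R}$, the degree $\mathrm{d}_{\mathrm{LS}}(Id-\tilde{\mathcal{N}}(\mu,\cdot;1),B_{R},0)$ on a ball of radius $R=M'(|\mu|)+1$; by the a~priori bound this is well defined (no solutions on the boundary) and, by homotopy in $\mu$ along bounded intervals together with the value $1$ computed at $\mu=\bar{u}_0$, it equals $1$ for every $\mu$. Therefore for each $\mu$ there is a solution with $\bar{u}=\mu$, so the projection of $\mathscr{S}$ onto $\mathbb{R}$ is all of $\mathbb{R}$; the existence of a \emph{single connected} set $\mathscr{C}$ with full projection then follows from the general structure theorem for solution sets of completely continuous perturbations of the identity (a standard Whyburn-type connectedness argument, as in \cite{Om-88}), using that on each slab $|\bar{u}|\leq M$ the solution set is compact.

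I expect the main obstacle to be purely expository rather than mathematical: namely, pinning down the precise abstract continuation statement from \cite{Om-88} (or \cite{AAM-78,FuPe-82,MRZ-00}) that yields a \emph{closed connected} set with full projection, as opposed to merely surjectivity of the projection of the whole solution set. The verification that $\tilde{\mathcal{N}}$ is completely continuous and that the degree is $1$ at $\lambda=0$ is routine given Lemma~\ref{lem-3.1}; the delicate point is that the hypothesis only bounds the $\lambda\in\mathopen{[}0,1\mathclose{]}$ solution set at the \emph{single} value $\bar{u}=\bar{u}_0$, so one must be careful to transport non-triviality of the degree from $\bar{u}_0$ to arbitrary $\bar{u}$ using only the $\lambda=1$ a~priori bound — which is exactly the homotopy-in-$\mu$ step above, valid because no solution touches the boundary of the enlarged ball $B_R$.
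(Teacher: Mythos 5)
The paper gives no proof of Lemma~\ref{lem-3.2} at all --- it is stated as adapted from \cite{Om-88} (see also \cite{AAM-78,FuPe-82,MRZ-00}) --- and your degree-theoretic continuation argument (degree $1$ at $\lambda=0$, homotopy in $\lambda$ at $\bar{u}_{0}$ using the first hypothesis, homotopy in $\bar{u}$ on enlarged balls $B_{R}$ using the a priori bound, then a Whyburn/Kuratowski limit-of-continua argument exploiting compactness of the solution set on each slab $|\bar{u}|\leq M$) is exactly the standard proof behind those references. I see no gap; the only step worth making explicit is that the degree computed at $\bar{u}_{0}$ on the ball furnished by the first hypothesis transfers by excision to the larger balls used in the $\bar{u}$-homotopy, which your setup already accommodates.
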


As a third step, we present an application of Lemma~\ref{lem-3.2} for problem
\begin{equation}\label{eq-AAM-lambda}
\begin{cases}
\,(\phi(\tilde{u}'))'+f(\bar{u}+\tilde{u})\tilde{u}'+\lambda g(t,\bar{u}+\tilde{u})-\dfrac{\lambda}{T}\displaystyle \int_{0}^{T}g(\xi,\bar{u}+\tilde{u}(\xi)) \, d\xi =0, \\
\,\tilde{u}\in\tilde{\mathfrak{D}}, \quad \lambda\in\mathopen{[}0,1\mathclose{]}.
\end{cases}
\end{equation}

\begin{proposition}\label{prop-3.1}
Assume that there exists a function $\rho\in L^{1}(\mathopen{[}0,T\mathclose{]},\mathbb{R}^{+})$ such that $|g(t,u)|\leq\rho(t)$ for a.e.~$t\in\mathopen{[}0,T\mathclose{]}$ and for all $u\in\mathbb{R}$.
Then, the following results hold.
\begin{itemize}
\item [$(i)$] There exists $K=K(\phi,\rho)>0$ such that any solution pair $(\bar{u},\tilde{u})$ to \eqref{eq-AAM-lambda} satisfies $\|\tilde{u}'\|_{L^{1}}\leq K$ and $\|\tilde{u}\|_{\infty}\leq K$.
\item [$(ii)$] For any $M>0$ there exists $M'=M'(\phi,f,\rho)>0$ such that if $|\bar{u}|\leq M$ then $\|\tilde{u}\|_{\mathcal{C}^{1}}\leq M'$, where $(\bar{u},\tilde{u})$ is a solution pair to \eqref{eq-AAM-lambda} for $\lambda=1$.
\end{itemize}
\end{proposition}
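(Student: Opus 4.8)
The plan is to follow the scheme of Lemma~\ref{lem-max-min}, the crucial observation being that the friction term $f(\bar u + \tilde u)\tilde u'$ contributes nothing after multiplication by a suitable test function and integration over a period, while the nonlinearity $g$ and its average are controlled uniformly by $\rho$, independently of $\bar u$.

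First I would prove $(i)$. Fix a solution pair $(\bar u, \tilde u)$ of \eqref{eq-AAM-lambda} for some $\lambda \in \mathopen{[}0,1\mathclose{]}$, extend $g(\cdot,\xi)$ to $\mathbb{R}$ by $T$-periodicity, let $t^{*}$ be a point of maximum of $\tilde u$ and set $v := \max\tilde u - \tilde u \geq 0$, so $v' = -\tilde u'$ and $\|v\|_{\infty} = \max\tilde u - \min\tilde u$. Writing the equation in the form $(\phi(\tilde u'))' = -f(\bar u + \tilde u)\tilde u' - \lambda\bigl(g(t,\bar u + \tilde u) - \tfrac{1}{T}\int_{0}^{T}g(\xi,\bar u + \tilde u(\xi))\,d\xi\bigr)$, multiplying by $v$ and integrating on $\mathopen{[}t^{*},t^{*}+T\mathclose{]}$, the boundary terms cancel by periodicity and an integration by parts gives $\int_{t^{*}}^{t^{*}+T}(\phi(\tilde u'))'v = \int_{0}^{T}\phi(\tilde u')\tilde u'$. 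The key point is that $\int_{t^{*}}^{t^{*}+T} f(\bar u + \tilde u)\tilde u' v\,dt = 0$, since the integrand is the derivative of the $T$-periodic function $t \mapsto -\int_{0}^{v(t)} f(\bar u + \max\tilde u - s)\,s\,ds$; this is exactly where the dependence on $\bar u$ drops out. The surviving term is estimated, using $|g| \leq \rho$ and $\bigl|\tfrac{1}{T}\int_{0}^{T}g\bigr| \leq \tfrac{1}{T}\|\rho\|_{L^{1}}$, by $2\lambda\|\rho\|_{L^{1}}\|v\|_{\infty}$. Combining with the elementary bound $\phi(\xi)\xi \geq b|\xi| - K_{b}$ (as in Lemma~\ref{lem-max-min}) and $\|v\|_{\infty} \leq \|\tilde u'\|_{L^{1}}$, one gets $b\|\tilde u'\|_{L^{1}} - K_{b}T \leq 2\|\rho\|_{L^{1}}\|\tilde u'\|_{L^{1}}$; picking $b > 2\|\rho\|_{L^{1}}$ yields $\|\tilde u'\|_{L^{1}} \leq K_{b}T/(b - 2\|\rho\|_{L^{1}}) =: K$, a constant depending only on $\phi$ and $\rho$. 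Since $\tilde u$ has zero mean it vanishes somewhere, so $\|\tilde u\|_{\infty} \leq \|\tilde u'\|_{L^{1}} \leq K$, after possibly enlarging $K$.

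Next I would prove $(ii)$. Take $\lambda = 1$ and $|\bar u| \leq M$. By $(i)$ we have $\|\tilde u\|_{\infty} \leq K$, hence $|\bar u + \tilde u(t)| \leq M + K$ and, by continuity of $f$, $|f(\bar u + \tilde u)| \leq F_{M} := \max_{|w| \leq M+K}|f(w)|$. Because $\tilde u'$ is continuous, $T$-periodic and of zero mean, there is $t_{0}$ with $\tilde u'(t_{0}) = 0$, so $\phi(\tilde u'(t_{0})) = \phi(0) = 0$; integrating the equation from $t_{0}$ one finds $|\phi(\tilde u'(t))| \leq F_{M}\|\tilde u'\|_{L^{1}} + 2\|\rho\|_{L^{1}} \leq F_{M}K + 2\|\rho\|_{L^{1}} =: C_{M}$ for every $t$. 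Thus $\tilde u'(t) \in \phi^{-1}(\mathopen{[}-C_{M},C_{M}\mathclose{]})$ for all $t$, which is a bounded interval since $\phi^{-1}$ is an increasing homeomorphism of $\mathbb{R}$, giving $\|\tilde u'\|_{\infty} \leq M'_{0}$ and hence $\|\tilde u\|_{\mathcal{C}^{1}} \leq K + M'_{0} =: M'$, with $M'$ depending on $\phi$, $f$, $\rho$ and $M$, as claimed.

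I do not expect any serious obstacle: the argument is a parametrized variant of Lemma~\ref{lem-max-min}. The only delicate point is to make sure the constant in $(i)$ is genuinely independent of $\bar u$ (and of $\lambda$), which rests on recognizing $f(\bar u + \tilde u)\tilde u' v$ as an exact derivative of a periodic primitive so that it vanishes upon integration over a period; everything else is the uniform control provided by the bound $|g| \leq \rho$.
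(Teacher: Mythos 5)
Your proof is correct and follows essentially the same energy-type argument as the paper: the paper multiplies the equation by $\tilde{u}$ itself (exploiting that $\tilde{u}$ has zero mean, so the averaged term integrates to zero and the friction term is an exact derivative of a periodic primitive), whereas you multiply by $v=\max\tilde{u}-\tilde{u}$ exactly as in Lemma~\ref{lem-max-min}, which only changes the constant by a harmless factor. Part $(ii)$ likewise matches the paper's argument of bounding $\phi(\tilde{u}')$ by integrating from a zero of $\tilde{u}'$ and inverting $\phi$.
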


\begin{proof}
Let $\lambda\in\mathopen{[}0,1\mathclose{]}$ and let $(\bar{u},\tilde{u})$ be a solution pair to \eqref{eq-AAM-lambda}. We proceed similarly as in the proof of Lemma~\ref{lem-max-min}. Multiplying \eqref{eq-AAM-lambda} by $\tilde{u}$ and integrating on $\mathopen{[}0,T\mathclose{]}$, we obtain
\begin{equation*}
\int_{0}^{T} (\phi(\tilde{u}'(\xi)))' \tilde{u}(\xi) \,d\xi \leq \|\rho\|_{L^{1}} \|\tilde{u}\|_{\infty}.
\end{equation*}
We notice that for every $b>0$ there exists $K_{b}>0$ such that $\phi(\xi)\xi\geq b|\xi|-K_{b}$, for every $\xi\in\mathbb{R}$. Hence, via an integration by parts, it follows that
\begin{align*}
&\int_{0}^{T} (\phi(\tilde{u}'(\xi)))' \tilde{u}(\xi) \,d\xi =\int_{0}^{T} \phi(\tilde{u}'(\xi)) \tilde{u}'(\xi) \,d\xi \\
&\geq \int_{0}^{T} \bigl{(}b|\tilde{u}'(\xi)|-K_{b}\bigr{)} \,d\xi
= b\|\tilde{u}'\|_{L^{1}}-K_{b}T.
\end{align*}
Finally, we obtain
\begin{equation*}
b\|\tilde{u}'\|_{L^{1}} \leq \|\rho\|_{L^{1}} \|\tilde{u}\|_{\infty} + K_{b}T \leq \|\rho\|_{L^{1}} \|\tilde{u}'\|_{L^{1}} + K_{b}T.
\end{equation*}
Then, taking $b>\|\rho\|_{L^{1}}$ and $K=K(\phi,\rho):=K_{b}T/(b-\|\rho\|_{L^{1}})$, we have $\|\tilde{u}'\|_{L^{1}}\leq K$ and so $\|\tilde{u}\|_{\infty}\leq K$. Hence, $(i)$ is proved.

Let $(\bar{u},\tilde{u})$ be a solution pair to \eqref{eq-AAM-lambda} for $\lambda=1$. Let $M>0$ and suppose that $|\bar{u}|\leq M$. By the assumptions on $f$ and $g$, and the above remarks, we have that $f(\bar{u}+\tilde{u})\tilde{u}'+ g(t,\bar{u}+\tilde{u})-\tfrac{1}{T}\int_{0}^{T}g(\xi,\bar{u}+\tilde{u}(\xi)) \, d\xi$ is bounded in $L^{1}$. Next, proceeding as in the last step of the proof of Lemma~\ref{lem-max-min}, we have $\|\tilde{u}'\|_{\infty}\leq K_{1}$ and so, from $(i)$, $\|\tilde{u}\|_{\mathcal{C}^{1}}\leq M' := K+K_{1}$. The proof of $(ii)$ is completed.
\end{proof}

In the next proposition we combine the results of Proposition~\ref{prop-3.1} with an argument exploited in \cite{BeMa-08}.
\begin{proposition}\label{prop-3.2}
Assume that there exists a function $\rho\in L^{1}(\mathopen{[}0,T\mathclose{]},\mathbb{R}^{+})$ such that $|g(t,u)|\leq\rho(t)$ for a.e.~$t\in\mathopen{[}0,T\mathclose{]}$ and for all $u\geq0$ (or for all $u\leq 0$,
respectively). Then, for every $d>0$ there exists $g_{0}\in\mathbb{R}$ such that equation $(\mathscr{E}_{s})$ for $s=g_{0}$ has a $T$-periodic solution $u_{0}$ with $u_{0}(t)\geq d$ for all $t\in\mathopen{[}0,T\mathclose{]}$ (or with $u_{0}(t)\leq- d$ for all $t\in\mathopen{[}0,T\mathclose{]}$,
respectively).
 \end{proposition}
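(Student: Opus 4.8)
The plan is to exploit the global connectedness structure from Lemma~\ref{lem-3.2} together with the uniform bounds established in Proposition~\ref{prop-3.1}, following the argument of Bereanu--Mawhin \cite{BeMa-08}. First I would modify the nonlinearity so that the boundedness hypothesis $|g(t,u)| \leq \rho(t)$ holds for \emph{all} $u\in\mathbb{R}$: since we only control $g$ for $u\geq 0$ (the other case being symmetric via $x:=-u$), I define an auxiliary Carath\'eodory function $\hat{g}(t,u)$ that coincides with $g(t,u)$ for $u\geq d$ and is suitably truncated (e.g.\ $\hat{g}(t,u):=g(t,\max\{u,d\})$ or a continuous interpolation) so that $|\hat{g}(t,u)|\leq\rho(t)$ for a.e.\ $t$ and all $u\in\mathbb{R}$. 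Any $T$-periodic solution of $(\phi(u'))'+f(u)u'+\hat{g}(t,u)=s$ which happens to satisfy $u(t)\geq d$ for all $t$ is then a genuine solution of $(\mathscr{E}_{s})$ with $g$, so it suffices to produce such a solution of the modified equation.

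Next I would apply Lemma~\ref{lem-3.2} to the modified problem \eqref{eq-AAM-lambda} (with $\hat{g}$ in place of $g$). By Proposition~\ref{prop-3.1}$(i)$, for \emph{every} $\bar{u}\in\mathbb{R}$ the set of solutions $\tilde{u}$ to \eqref{eq-AAM-lambda} with $\lambda\in\mathopen{[}0,1\mathclose{]}$ is bounded (uniformly, even), so in particular the hypothesis at $\bar{u}=\bar{u}_{0}:=0$ holds; and Proposition~\ref{prop-3.1}$(ii)$ gives exactly the required local $\mathcal{C}^1$-bound $|\bar{u}|\leq M \Rightarrow \|\tilde{u}\|_{\mathcal{C}^1}\leq M'$ for solution pairs at $\lambda=1$. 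Hence Lemma~\ref{lem-3.2} yields a closed connected set $\mathscr{C}\subseteq\mathbb{R}\times\mathcal{C}^1_T$ of solution pairs $(\bar{u},\tilde{u})$ of \eqref{eq-AAM-lambda} at $\lambda=1$ whose projection onto the $\bar{u}$-axis is all of $\mathbb{R}$. For a pair $(\bar{u},\tilde{u})\in\mathscr{C}$, the function $u:=\bar{u}+\tilde{u}$ solves $(\phi(u'))'+f(u)u'+\hat{g}(t,u)=\tfrac{1}{T}\int_0^T\hat{g}(\xi,\bar{u}+\tilde{u}(\xi))\,d\xi =: s(\bar{u},\tilde{u})$, i.e.\ it is a $T$-periodic solution of the modified $(\mathscr{E}_{s})$ for that value of $s$.

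Now I would pick $\bar{u}$ large enough. By Proposition~\ref{prop-3.1}$(i)$ we have the uniform bound $\|\tilde{u}\|_{\infty}\leq K$ along \emph{all} of $\mathscr{C}$, with $K=K(\phi,\rho)$ independent of $\bar{u}$. Choosing any $(\bar{u}_*,\tilde{u}_*)\in\mathscr{C}$ with $\bar{u}_*\geq d+K$ (possible since the projection is surjective onto $\mathbb{R}$), the solution $u_0:=\bar{u}_*+\tilde{u}_*$ satisfies $u_0(t)\geq \bar{u}_* - K \geq d$ for all $t\in\mathopen{[}0,T\mathclose{]}$; on this range $\hat{g}(t,u_0(t))=g(t,u_0(t))$, so $u_0$ is a $T$-periodic solution of the original $(\mathscr{E}_{s})$ with $s=g_0:=\tfrac{1}{T}\int_0^T g(\xi,u_0(\xi))\,d\xi$. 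This $g_0$ is a well-defined real number (bounded by $\|\rho\|_{L^1}/T$), which is all that is claimed. The symmetric case (bound for $u\leq 0$, solution with $u_0(t)\leq -d$) follows by applying the above to the transformed equation \eqref{eq-transf} exactly as in the proof of Lemma~\ref{lem-max-min-2}.

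The main obstacle, and the reason the auxiliary truncation is essential, is that Proposition~\ref{prop-3.1} requires a \emph{global} $L^1$-bound $|g(t,u)|\leq\rho(t)$ valid for all $u\in\mathbb{R}$, whereas the hypothesis of Proposition~\ref{prop-3.2} only supplies this for $u\geq 0$ (or $u\leq 0$). One must therefore verify carefully that the modification $\hat{g}$ is still a Carath\'eodory function, that it inherits the global bound by $\rho$, and — most importantly — that the uniform estimate $\|\tilde{u}\|_{\infty}\leq K$ from Proposition~\ref{prop-3.1}$(i)$ is genuinely independent of $\bar{u}$, so that the threshold $\bar{u}_*\geq d+K$ can be met. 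Everything else is a direct assembly of the connectedness lemma with the a priori bounds.
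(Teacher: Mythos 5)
Your proposal is correct and follows essentially the same route as the paper: truncate $g$ (the paper cuts at $u=0$ via $\hat g(t,u):=g(t,\max\{u,0\})$, you cut at $u=d$, an immaterial difference), invoke Lemma~\ref{lem-3.2} together with the a priori bounds of Proposition~\ref{prop-3.1} to obtain the continuum $\mathscr{C}$ projecting onto all of $\mathbb{R}$, and then choose $\bar u\geq d+K$ so that $u_0=\bar u+\tilde u$ stays above $d$ and the truncation is inactive. Nothing further is needed.
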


\begin{proof}
Let us suppose that $|g(t,u)|\leq\rho(t)$ for a.e.~$t\in\mathopen{[}0,T\mathclose{]}$ and for all $u\geq0$. Let us define the nonlinearity $\hat{g}\colon\mathopen{[}0,T\mathclose{]}\times\mathbb{R}\to\mathbb{R}$ as follows
\begin{equation*}
\hat{g}(t,u) :=
\begin{cases}
\, g(t,0), &\text{if $u\leq 0$,}\\
\, g(t,u), &\text{if $u\geq 0$.}
\end{cases}
\end{equation*}
We notice that $|\hat{g}(t,u)|\leq\rho(t)$ for a.e.~$t\in\mathopen{[}0,T\mathclose{]}$ and for all $u\in\mathbb{R}$.
Let us consider the equation
\begin{equation*}
(\phi(u'))'+f(u)u'+\hat{g}(t,u)=s.
\leqno{(\hat{\mathscr{E}}_{s})}
\end{equation*}
As a direct application of Lemma~\ref{lem-3.2} and Proposition~\ref{prop-3.1}, we obtain that there exists a continuum $\mathscr{C}\subseteq \mathbb{R}\times \tilde{\mathfrak{D}}$ of solution pairs $(\bar{u},\tilde{u})$ to \eqref{eq-AAM-lambda} for $\lambda=1$ such that $\{\bar{u}\in\mathbb{R}\colon(\bar{u},\tilde{u})\in\mathscr{C}\}=\mathbb{R}$. As a consequence, for every $\bar{u}\in\mathbb{R}$ there exists a solution $\tilde{u}\in\tilde{\mathfrak{D}}$ to \eqref{eq-AAM-lambda} for $\lambda=1$ and satisfying conditions $(i)$ and $(ii)$.
Let $\bar{u}_{0}\geq d+K$ and let $\tilde{u}_{0}$ be the corresponding solution to \eqref{eq-AAM-lambda} for $\lambda=1$. Let us define $u_{0}:=\bar{u}_{0}+\tilde{u}_{0}$. We notice that $u_{0}$ is a $T$-periodic solution to $(\hat{\mathscr{E}}_{s})$ for $s=g_{0}:=\tfrac{1}{T}\int_{0}^{T} \hat{g}(t,u_0(t))\,dt$. Moreover, $u_{0}(t)\geq d+K-\|\tilde{u}_{0}\|_{\infty}>d$, for all $t\in\mathopen{[}0,T\mathclose{]}$. Then $u_{0}$ is a $T$-periodic solution to $(\mathscr{E}_{s})$ for $s=g_{0}$ with $u_{0}(t)\geq d$, for all $t\in\mathopen{[}0,T\mathclose{]}$.

If we assume that $|g(t,u)|\leq\rho(t)$ for a.e.~$t\in\mathopen{[}0,T\mathclose{]}$ and for all $u\leq0$, one can proceed in a similar manner.
The theorem is thus proved.
\end{proof}

\section{Applications}\label{section-4}

In this final section, we present two consequences of the theorems illustrated in Section~\ref{section-3}. More precisely, first we show some results in the framework of $T$-periodic forced Li\'{e}nard-type equations for which theorems illustrated in the introduction are straightforward corollaries. Secondly, we analyse Neumann problems in the framework of radially symmetric solutions to partial differential equations.

\subsection{Weighted periodic problems}\label{section-4.1}

We deal with the $T$-periodic forced Li\'{e}nard-type equation
\begin{equation*}
(\phi(u'))'+f(u)u'+a(t)q(u)=s+e(t),
\leqno{(\mathscr{W\!E}_{s})}
\end{equation*}
where $s\in\mathbb{R}$ is a parameter, $\phi\colon\mathbb{R}\to\mathbb{R}$ is an increasing homeomorphism such that $\phi(0)=0$, the functions $f,q\colon\mathbb{R}\to\mathbb{R}$
are continuous. We also assume $a\in L^{\infty}(0,T)$ and $e\in L^{1}(0,T)$.
Moreover, we suppose $a(t)\geq 0$ for a.e.~$t\in\mathopen{[}0,T\mathclose{]}$ with $\bar{a}:=\tfrac{1}{T}\int_{0}^{T}a(t)\,dt>0$.
When the limits of the continuous function $q$ at $\pm\infty$ exist, we set
\begin{equation}\label{cond-q}
\lim_{u\to-\infty}q(u)=\omega_{-},
\quad
\lim_{u\to+\infty}q(u)=\omega_{+}.
\end{equation}

In the sequel we apply the general results achieved in Section~\ref{section-3} to the broadest class of nonlinear terms $q$. In order to do this, we observe that it is not restrictive to assume that $\bar{e}:=\tfrac{1}{T}\int_{0}^{T}e(t)\,dt=0$,
and, moreover, that $\min\{\omega_{-}, \omega_{+}\}>0$, if $q$ is bounded from below,
or that $\max\{\omega_{-}, \omega_{+}\}<0$, if $q$ is bounded from above.
Indeed, if necessary, one can include in the forcing term $e(t)$ the function $a(t)(-\inf q+\varepsilon)$ (or $a(t)(-\sup q-\varepsilon)$,
respectively) for some $\varepsilon>0$, and, next, add the mean value $\bar{e}$ in the parameter $s$.

We are now in position to present some corollaries of Theorem~\ref{th-3.1}, Theorem~\ref{th-3.2} and their variants. In more detail, we are interested in applications which always involve (AP) or (BM) alternatives, where the existence of at least two $T$-periodic solutions to $(\mathscr{W\!E}_{s})$ is considered. Beside these results, we warn that even partial alternatives, concerning only the existence of at least one or non-existence of $T$-periodic solutions, could be performed within our framework.

For nonlinearities $q$ bounded from below, the following result holds true.

\begin{theorem}\label{th-4.1}
Assume that there exists a number $u_{0}\in\mathbb{R}$ such that
\begin{equation*}
0 \leq q(u_{0})<\min\{\omega_{-},\omega_{+}\}
\end{equation*}
and
\begin{equation}\label{cond-ex1}
\bar{a} \min\{\omega_{-},\omega_{+}\} > \|a\|_{\infty}q(u_{0})+\|e^{-}\|_{\infty}.
\end{equation}
Then, there exists $s_{0}\in \mathopen{]}-\infty,\bar{a}\omega_{-}\mathclose{[}$ such that:
\begin{itemize}
\item for $s<s_{0}$, equation $(\mathscr{W\!E}_{s})$ has no $T$-periodic solutions;
\item for $s =s_{0}$, equation $(\mathscr{W\!E}_{s})$ has at least one $T$-periodic solution;
\item for $s_{0}<s<\bar{a}\omega_{-}$, equation $(\mathscr{W\!E}_{s})$ has at least one $T$-periodic solution;
\item for $s_{0}<s<\bar{a}\min\{\omega_{-},\omega_{+}\}$, equation $(\mathscr{W\!E}_{s})$ has at least two $T$-periodic solutions.
\end{itemize}
\end{theorem}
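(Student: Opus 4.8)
The plan is to deduce Theorem~\ref{th-4.1} directly from Theorem~\ref{th-3.1} (for existence of at least one solution and the non-existence half-line) together with its ``one solution'' variant and with Theorem~\ref{th-deg-dual}/Theorem~\ref{th-3.1} applied on the opposite side to get the second solution for $s$ below $\bar a\,\omega_+$ as well. First I would rewrite $(\mathscr{W\!E}_s)$ in the form $(\mathscr{E}_s)$ by setting $g(t,u):=a(t)q(u)-e(t)$, so that a $T$-periodic solution of $(\mathscr{W\!E}_s)$ is exactly a $T$-periodic solution of $(\mathscr{E}_s)$ with this $g$. Condition $(H_0)$ holds because $q$ is continuous and $a,e\in L^\infty$. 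Since $q$ has finite liminf at $\pm\infty$ (being bounded from below near $\pm\infty$ by the existence of $\omega_\pm>q(u_0)\ge 0$) there is $m\in\mathbb{R}$ with $q(u)\ge m$ for all $u$, hence $g(t,u)\ge a(t)m-e(t)\ge -\gamma_0(t)$ with $\gamma_0:=|m|\,\|a\|_\infty+e^-\in L^1$; this is $(H^{\rm I}_1)$. For $(H^{\rm I}_2)$ take the given $u_0$: $g(t,u_0)=a(t)q(u_0)-e(t)\le \|a\|_\infty q(u_0)+e^-(t)$, so one may set $g_0:=\|a\|_\infty q(u_0)+\|e^-\|_\infty$ (using $q(u_0)\ge 0$).

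The key computation is verifying the Villari-type averaged conditions $(H^{\rm I}_3)$ and $(H^{\rm I}_4)$. Fix $\varepsilon>0$ small. Since $q(u)\to\omega_-$ as $u\to-\infty$, there is $d>0$ such that $q(u)>\omega_- -\varepsilon$ for $u\le -d$; then for any $u\in\mathcal{C}^1_T$ with $u(t)\le -d$ everywhere,
\begin{equation*}
\frac1T\int_0^T g(t,u(t))\,dt=\frac1T\int_0^T a(t)q(u(t))\,dt-\bar e > (\omega_- -\varepsilon)\,\bar a,
\end{equation*}
using $a\ge0$ and $\bar e=0$. Choosing $\varepsilon$ small enough that $(\omega_- -\varepsilon)\bar a>\max\{0,g_0\}$ — which is possible precisely because of hypothesis \eqref{cond-ex1}, since $\bar a\,\omega_-\ge \bar a\min\{\omega_-,\omega_+\}>\|a\|_\infty q(u_0)+\|e^-\|_\infty=g_0\ge 0$ — we obtain $(H^{\rm I}_3)$ with $\sigma:=(\omega_- -\varepsilon)\bar a$. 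Letting $\varepsilon\to 0$ shows $\sigma^*\ge \bar a\,\omega_-$, so $\sigma^*\in\mathopen]g_0,+\infty\mathclose[$ with $\sigma^*\ge\bar a\omega_-$. The identical argument at $+\infty$, using now $\bar a\min\{\omega_-,\omega_+\}>g_0$, gives $(H^{\rm I}_4)$ for every $\sigma<\bar a\,\min\{\omega_-,\omega_+\}$, hence $\sigma^{**}\ge \bar a\,\min\{\omega_-,\omega_+\}$ (intersected with $\sigma^*$, which is $\ge\bar a\omega_-\ge\bar a\min\{\omega_-,\omega_+\}$, so $\sigma^{**}=\min\{\sigma^*,\bar a\min\{\omega_-,\omega_+\}\}\ge\bar a\min\{\omega_-,\omega_+\}$ up to the sup cutoff).

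Then Theorem~\ref{th-3.1} applies verbatim: it produces $s_0\in\mathopen]-\infty,\sigma^*\mathclose[$ with no $T$-periodic solutions for $s<s_0$, at least one for $s_0<s<\sigma^*$, at least one at $s=s_0$ (this uses $(H^{\rm I}_4)$, which we have), and at least two for $s_0<s<\sigma^{**}$. Since $\sigma^*\ge\bar a\omega_-$ and $\sigma^{**}\ge \bar a\min\{\omega_-,\omega_+\}$, the four bulleted conclusions of Theorem~\ref{th-4.1} follow, with the at-least-one statement on $\mathopen]s_0,\bar a\omega_-\mathclose[$ coming from $s_0<s<\sigma^*$ restricted to that subinterval, and the two-solution statement on $\mathopen]s_0,\bar a\min\{\omega_-,\omega_+\}\mathclose[$ from $s_0<s<\sigma^{**}$. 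It remains only to note $s_0<\sigma^*$ is finite and $s_0<\bar a\omega_-$; this is automatic from $\sigma^*\ge\bar a\omega_-$ and the fact that $s_0$ is a genuine infimum of a nonempty bounded-below set (nonemptiness because $(H^{\rm I}_3)$ was verified, so the hypotheses of Theorem~\ref{th-3.1} are met). The main obstacle is bookkeeping the two auxiliary cutoffs performed before the statement of the theorem — the reduction $\bar e=0$ and the normalization $\min\{\omega_-,\omega_+\}>0$ — and checking that \eqref{cond-ex1} survives those reductions with the stated strict inequality; once that is in place, everything is a direct translation of Theorem~\ref{th-3.1}.
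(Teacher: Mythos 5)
Your proposal is correct and follows essentially the same route as the paper: reduce to Theorem~\ref{th-3.1} with $g(t,u)=a(t)q(u)-e(t)$, take $g_{0}=\|a\|_{\infty}q(u_{0})+\|e^{-}\|_{\infty}$, and verify the Villari conditions $(H^{\rm I}_{3})$--$(H^{\rm I}_{4})$ from the limits $\omega_{\pm}$ (the paper phrases this via a generalized mean value theorem, you via an $\varepsilon$-cushion; both work since $\bar a>0$). The one imprecision is at the end: $s_{0}<\bar a\,\omega_{-}$ is \emph{not} automatic from $s_{0}<\sigma^{*}$ together with $\sigma^{*}\geq\bar a\,\omega_{-}$; you also need the reverse inequality $\sigma^{*}\leq\bar a\,\omega_{-}$ when $\omega_{-}<+\infty$ (immediate by testing $(H^{\rm I}_{3})$ on large negative constants, as the paper notes), so that $\sigma^{*}=\bar a\,\omega_{-}$ exactly --- a one-line fix, but as written your justification for the first bullet does not close.
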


\begin{proof}
We apply Theorem~\ref{th-3.1} for $g(t,u) := a(t) q(u) - e(t)$.
We notice that, since $q$ is continuous and $a$ is an $L^{\infty}$-function, condition $(H_{0})$ is satisfied.
Moreover, since $q(u_{0})<\min\{\omega_{-},\omega_{+}\}$, we obtain that $q_{0}:=\min q\in\mathbb{R}$ is well defined.
Then, defining $\gamma_{0}$ as the negative part of $a(t) q_{0} - e(t)$, condition $(H^{\rm{I}}_{1})$ holds.
Furthermore, for $g_{0}= \|a\|_{\infty} q(u_{0})+ \|e^{-}\|_{\infty}$, $(H^{\rm{I}}_{2})$ is satisfied too.

Lastly, we need that both Villari's type conditions $(H^{\rm{I}}_{3})$ and $(H^{\rm{I}}_{4})$ are satisfied as well.
Let $\sigma \in\mathopen{]}g_{0}, \bar{a} \omega_{-}\mathclose{[}$ and notice that the interval is well defined and non-empty, since by hypothesis we have $g_{0} <\bar{a} \omega_{-}$.
From \eqref{cond-q} it follows that there exists $\kappa_{\sigma}>0$ such that $\bar{a}q(\xi) > \sigma$, for all $\xi\leq-\kappa_{\sigma}$.
Let $d=\kappa_{\sigma}$ and let $u\in\mathcal{C}^{1}_{T}$ be such that $u(t) \leq -d$ for every $t\in\mathopen{[}0,T\mathclose{]}$. From the generalized mean value theorem, there exists $\tilde{t}\in\mathopen{[}0,T\mathclose{]}$ such that the following holds
\begin{equation*}
\dfrac{1}{T} \int_{0}^{T} g(t,u(t)) \,dt = \dfrac{1}{T} \int_{0}^{T} a(t)q(u(t)) \,dt = \bar{a} q(u(\tilde{t})) >\sigma
\end{equation*}
and so $(H^{\rm{I}}_{3})$ is satisfied.
By the arbitrary choice of $\sigma$ we have that $(H^{\rm{I}}_{3})$ is satisfied for every $\sigma \in\mathopen{]}g_{0}, \bar{a} \omega_{-}\mathclose{[}$.
Recalling the definition of $\sigma^{*}$ in \eqref{sigma*}, we claim that $\sigma^{*}=\bar{a} \omega_{-}$. Indeed, if $\omega_{-}=+\infty$, then the claim is straightforward verified. If $\omega_{-}<+\infty$, the claim is reached by noticing that $(H^{\rm{I}}_{3})$ is not true for $\sigma>\bar{a}\omega_{-}$. Analogously, one can prove that $(H^{\rm{I}}_{4})$ holds for every $\sigma \in\mathopen{]}g_{0}, \sigma^{**}\mathclose{[}$, with $\sigma^{**}=\bar{a}\min\{\omega_{-},\omega_{+}\}$. The thesis follows.
\end{proof}

Applying the variant of Theorem~\ref{th-3.1} introduced in Remark~\ref{rem-3.1} we have the following result.

\begin{theorem}\label{th-4.2}
Assume that there exists $D>0$ such that
\begin{equation}\label{cond-ex2}
q(u)<\omega_{-},\; \text{ for $u\leq-D$,} \quad\text{ and }\quad q(u)<\omega_{+},\; \text{ for $u\geq D$.}
\end{equation}
Moreover, suppose that $\min\{\omega_{-},\omega_{+}\}<+\infty$. Then, the conclusion of Theorem~\ref{th-4.1} holds.
\end{theorem}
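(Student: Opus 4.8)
I would deduce Theorem~\ref{th-4.2} from Theorem~\ref{th-3.1}, in the version described in Remark~\ref{rem-3.1} in which $(H^{\rm{I}}_{2})$ is replaced by $(H^{\bigstar}_{2})$, applied to $g(t,u):=a(t)q(u)-e(t)$. Recall that throughout Section~\ref{section-4.1} we have $\bar{e}=0$, the weight $\bar{a}>0$, and the normalization $\min\{\omega_{-},\omega_{+}\}>0$, so in particular $q$ is bounded from below. As in the proof of Theorem~\ref{th-4.1}, condition $(H_{0})$ holds because $q$ is continuous and $a\in L^{\infty}(0,T)$, while $(H^{\rm{I}}_{1})$ holds by taking $\gamma_{0}$ to be the negative part of $a(t)\min q-e(t)$.

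The point that replaces condition \eqref{cond-ex1} of Theorem~\ref{th-4.1} is to produce $(H^{\bigstar}_{2})$ with the extra bound $g_{0}<\bar{a}\min\{\omega_{-},\omega_{+}\}$. Suppose $\omega_{+}=\min\{\omega_{-},\omega_{+}\}$ (the case $\omega_{-}=\min\{\omega_{-},\omega_{+}\}$ being analogous); since this value is finite, $q$ is bounded on $\mathopen{[}0,+\infty\mathclose{[}$, so $|g(t,u)|\leq\rho(t)$ for all $u\geq0$ with $\rho\in L^{1}$. Proposition~\ref{prop-3.2}, applied with $d=D$, then yields $g_{0}\in\mathbb{R}$ and a $T$-periodic solution $u_{0}$ of $(\mathscr{E}_{s})$ for $s=g_{0}$ with $u_{0}(t)\geq D$ for all $t$. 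Averaging the equation, using $\bar{e}=0$ and the generalized mean value theorem (where $\bar{a}>0$ is needed), one gets $g_{0}=\bar{a}\,q(u_{0}(\tilde{t}))$ for some $\tilde{t}\in\mathopen{[}0,T\mathclose{]}$; since $u_{0}(\tilde{t})\geq D$, condition \eqref{cond-ex2} forces $q(u_{0}(\tilde{t}))<\omega_{+}$, whence $g_{0}<\bar{a}\omega_{+}=\bar{a}\min\{\omega_{-},\omega_{+}\}$. By Remark~\ref{rem-3.1}, $u_{0}$ is then a strict upper solution to $(\mathscr{E}_{s})$ for every $s>g_{0}$, as required by the version of Theorem~\ref{th-3.1} in Remark~\ref{rem-3.1}.

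The verification of the Villari-type conditions $(H^{\rm{I}}_{3})$, $(H^{\rm{I}}_{4})$ and the computation of $\sigma^{*}$, $\sigma^{**}$ then proceed exactly as in the proof of Theorem~\ref{th-4.1}, using only the limits \eqref{cond-q}. For $\sigma\in\mathopen{]}\max\{0,g_{0}\},\bar{a}\omega_{-}\mathclose{[}$, which is non-empty since $g_{0}<\bar{a}\min\{\omega_{-},\omega_{+}\}\leq\bar{a}\omega_{-}$ and $\bar{a}\omega_{-}>0$, the limit $q(u)\to\omega_{-}$ provides $\kappa_{\sigma}>0$ with $\bar{a}q(\xi)>\sigma$ for $\xi\leq-\kappa_{\sigma}$, and the generalized mean value theorem gives $\tfrac{1}{T}\int_{0}^{T}g(t,u(t))\,dt=\bar{a}q(u(\tilde{t}))>\sigma$ whenever $u\leq-\kappa_{\sigma}$, so $(H^{\rm{I}}_{3})$ holds; evaluating on the constants $u\equiv-n$ as $n\to+\infty$ shows $(H^{\rm{I}}_{3})$ fails for $\sigma>\bar{a}\omega_{-}$ when $\omega_{-}<+\infty$, so that $\sigma^{*}=\bar{a}\omega_{-}$ (and $\sigma^{*}=+\infty=\bar{a}\omega_{-}$ if $\omega_{-}=+\infty$). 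The same argument at $+\infty$ shows $(H^{\rm{I}}_{4})$ holds for $\sigma<\bar{a}\omega_{+}$ and fails for $\sigma>\bar{a}\omega_{+}$, so $\sigma^{**}=\min\{\bar{a}\omega_{+},\sigma^{*}\}=\bar{a}\min\{\omega_{-},\omega_{+}\}$. Theorem~\ref{th-3.1}, in the form of Remark~\ref{rem-3.1}, then produces $s_{0}\in\mathopen{]}-\infty,\bar{a}\omega_{-}\mathclose{[}$ with the non-existence, existence and multiplicity properties stated there, which coincide with the four conclusions of Theorem~\ref{th-4.1}.

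The only genuinely new ingredient compared with Theorem~\ref{th-4.1}, and the step I expect to be the main obstacle, is the construction in the second paragraph: since \eqref{cond-ex2} supplies no explicit constant strict upper solution, one must instead obtain a genuine $T$-periodic solution far out on the side realizing $\min\{\omega_{-},\omega_{+}\}$ --- via the Amann--Ambrosetti--Mancini type continuum of Lemma~\ref{lem-3.2} and Proposition~\ref{prop-3.2} --- and bound the associated constant $g_{0}$ by $\bar{a}\min\{\omega_{-},\omega_{+}\}$ through \eqref{cond-ex2}. With this in hand, the remainder is a routine repetition of the proof of Theorem~\ref{th-4.1}.
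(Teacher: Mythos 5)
Your proposal is correct and follows essentially the same route as the paper: replace $(H^{\rm{I}}_{2})$ by $(H^{\bigstar}_{2})$ via Remark~\ref{rem-3.1}, use Proposition~\ref{prop-3.2} to produce a $T$-periodic solution $u_{0}$ far out on the side realizing the finite minimum of $\{\omega_{-},\omega_{+}\}$, bound $g_{0}<\bar{a}\min\{\omega_{-},\omega_{+}\}$ through \eqref{cond-ex2}, and then repeat the Villari-condition computation of Theorem~\ref{th-4.1}. The only (immaterial) difference is that you treat first the case $\omega_{+}=\min\{\omega_{-},\omega_{+}\}$ with $u_{0}\geq D$, whereas the paper starts from $\omega_{-}=\min\{\omega_{-},\omega_{+}\}$ with $u_{0}\leq -D$.
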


\begin{proof}
Let us suppose that $\min\{\omega_{-},\omega_{+}\}=\omega_{-}<+\infty$.
We apply Theorem~\ref{th-3.1} for $g(t,u) := a(t) q(u) - e(t)$. The conditions $(H_{0})$ and $(H^{\rm{I}}_{1})$ are verified as in the proof of Theorem~\ref{th-4.1}. Furthermore, an application of Proposition~\ref{prop-3.2} ensures that condition $(H^{\bigstar}_{2})$ is satisfied for some $g_{0}\in\mathbb{R}$ and $u_{0}\in\mathcal{C}^{1}_{T}$.
Indeed, since $\omega_{-}<+\infty$, it follows that there exist $g_{0}\in\mathbb{R}$ and $u_{0}\in\mathcal{C}^{1}_{T}$ such that $u_{0}(t) \leq -D$ for every $t\in\mathopen{[}0,T\mathclose{]}$, with $D>0$ as in the statement.
Then, on the light of Remark~\ref{rem-3.1}, one has only to verify the Villari's type conditions $(H^{\rm{I}}_{3})$ and $(H^{\rm{I}}_{4})$.
In order to do this, we first observe that
\begin{equation*}
g_{0}=\dfrac{1}{T} \int_{0}^{T} a(t)q(u_{0}(t)) \,dt< \bar{a}\omega_{-}\leq \bar{a}\omega_{+}.
\end{equation*}
Then, as in the proof of Theorem~\ref{th-4.1}, we have that $(H^{\rm{I}}_{3})$ is satisfied for every $\sigma \in\mathopen{]}g_{0}, \bar{a} \omega_{-}\mathclose{[}$ and $(H^{\rm{I}}_{4})$ is verified too for every $\sigma \in\mathopen{]}g_{0}, \bar{a}\min\{\omega_{-},\omega_{+}\}\mathclose{[}$.

On the other hand, if we suppose that $\min\{\omega_{-},\omega_{+}\}=\omega_{+}<+\infty$, we achieve the thesis in a similar way.
\end{proof}

Analogously, the following results for nonlinearities $q$ bounded from above can be obtained as an application of Theorem~\ref{th-3.2} (cf.~also Remark~\ref{rem-3.1}).

\begin{theorem}\label{th-4.3}
Assume that there exists a number $u_{0}\in\mathbb{R}$ such that
\begin{equation*}
0 \geq q(u_{0})>\max\{\omega_{+},\omega_{-}\}
\end{equation*}
and
\begin{equation}\label{cond-ex3}
\bar{a}\max\{\omega_{-},\omega_{+}\} < \|a\|_{\infty}q(u_{0})-\|e^{+}\|_{\infty}.
\end{equation}
Then, there exists $s_{0}\in \mathopen{]}\bar{a}\omega_{-},+\infty\mathclose{[}$ such that:
\begin{itemize}
\item for $s>s_{0}$, equation $(\mathscr{W\!E}_{s})$ has no $T$-periodic solutions;
\item for $s =s_{0}$, equation $(\mathscr{W\!E}_{s})$ has at least one $T$-periodic solution;
\item for $\bar{a}\omega_{-}<s<s_{0}$, equation $(\mathscr{W\!E}_{s})$ has at least one $T$-periodic solution;
\item for $\bar{a}\max\{\omega_{-},\omega_{+}\}<s<s_{0}$, equation $(\mathscr{W\!E}_{s})$ has at least two $T$-periodic solutions.
\end{itemize}
\end{theorem}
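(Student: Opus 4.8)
The plan is to apply Theorem~\ref{th-3.2} with $g(t,u):=a(t)q(u)-e(t)$, so that $(\mathscr{W\!E}_{s})$ takes the form $(\mathscr{E}_{s})$; equivalently, one may perform the change of variable $x:=-u$ (as in the proof of Theorem~\ref{th-3.2}) and apply Theorem~\ref{th-4.1} to the reflected data $\tilde{q}(u):=-q(-u)$, $\tilde{e}(t):=-e(t)$, for which $\tilde{\omega}_{\pm}=-\omega_{\mp}$ and for which the hypotheses $0\leq\tilde{q}(-u_{0})<\min\{\tilde{\omega}_{-},\tilde{\omega}_{+}\}$ and \eqref{cond-ex1} become precisely $0\geq q(u_{0})>\max\{\omega_{-},\omega_{+}\}$ and \eqref{cond-ex3}. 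I would, however, carry out the verification directly in the original variables, transcribing the proof of Theorem~\ref{th-4.1}.

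First the structural hypotheses. Since $q$ is continuous and $a\in L^{\infty}(0,T)$, condition $(H_{0})$ holds. Because the limits of $q$ at $\pm\infty$ are both $<0$ (finite or $-\infty$), $q$ is bounded from above, so $g(t,u)\leq\gamma_{0}(t)$ for a suitable $\gamma_{0}\in L^{1}(\mathopen{[}0,T\mathclose{]},\mathbb{R}^{+})$ and $(H^{\rm{II}}_{1})$ holds. From $q(u_{0})\leq 0$, $0\leq a(t)\leq\|a\|_{\infty}$ and $-e(t)\geq-\|e^{+}\|_{\infty}$ one obtains $g(t,u_{0})\geq\|a\|_{\infty}q(u_{0})-\|e^{+}\|_{\infty}=:g_{0}$, i.e.\ $(H^{\rm{II}}_{2})$; by Lemma~\ref{lem-strictlower} the constant $u_{0}$ is then a strict lower solution of $(\mathscr{W\!E}_{s})$ for every $s<g_{0}$. (When the constant $u_{0}$ is not directly available one falls back, as in the proof of Theorem~\ref{th-4.2}, on Remark~\ref{rem-3.1} and Proposition~\ref{prop-3.2}, which explains the pointer ``cf.\ Remark~\ref{rem-3.1}'' in the statement.)

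Next the Villari-type conditions $(H^{\rm{II}}_{4})$ and $(H^{\rm{II}}_{3})$. Given $u\in\mathcal{C}^{1}_{T}$ with $u(t)\geq d$ (respectively $u(t)\leq-d$) for all $t$, the fact that $a\geq 0$ and $\bar{e}=0$ lets the generalized mean value theorem produce $\tilde{t}\in\mathopen{[}0,T\mathclose{]}$ with $\tfrac{1}{T}\int_{0}^{T}g(t,u(t))\,dt=\bar{a}\,q(u(\tilde{t}))$; by \eqref{cond-q} this is close to $\bar{a}\omega_{+}$ (respectively $\bar{a}\omega_{-}$) as soon as $d$ is large, while \eqref{cond-ex3} guarantees $\bar{a}\max\{\omega_{-},\omega_{+}\}<g_{0}$, leaving room below $\min\{0,g_{0}\}$. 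Hence both conditions hold, and analysing the associated thresholds exactly as $\sigma^{*}$, $\sigma^{**}$ are analysed in the proof of Theorem~\ref{th-4.1} identifies the constants $\nu^{*}$, $\nu^{**}$ of Theorem~\ref{th-3.2} with the values appearing in the statement. Theorem~\ref{th-3.2} then delivers $s_{0}\in\mathopen{]}\bar{a}\omega_{-},+\infty\mathclose{[}$ together with the four claimed alternatives.

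The step I expect to cost the most effort is precisely this identification of $\nu^{*}$ and $\nu^{**}$: one should split into the cases $\max\{\omega_{-},\omega_{+}\}=\omega_{-}$ and $=\omega_{+}$ (as in the proof of Theorem~\ref{th-4.2}), treat separately the sub-cases in which $\omega_{-}$ or $\omega_{+}$ equals $-\infty$ (so that the corresponding threshold degenerates and the associated interval becomes all of $\mathbb{R}$), and keep $(H^{\rm{II}}_{3})$ — needed only for the statements at $s=s_{0}$ and for the existence of the second solution — compatible with the constraint $\nu<\min\{0,g_{0}\}$ entering its definition. The remaining estimates are a routine transcription of the proof of Theorem~\ref{th-4.1}.
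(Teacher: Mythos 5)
Your approach is exactly the paper's: Theorem~\ref{th-4.3} is stated there without an explicit proof, as an application of Theorem~\ref{th-3.2} (equivalently, the reflection $x:=-u$ followed by Theorem~\ref{th-4.1} applied to $\tilde q(x)=-q(-x)$, $\tilde e=-e$), and your verification of $(H_{0})$, $(H^{\rm II}_{1})$, of $(H^{\rm II}_{2})$ with $g_{0}=\|a\|_{\infty}q(u_{0})-\|e^{+}\|_{\infty}$, and of the Villari-type conditions via the weighted mean value theorem is the intended argument.

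There is, however, one point that does not close as you state it. Your own computation shows that $(H^{\rm II}_{4})$ --- the condition for $u(t)\geq d$, which is the one defining $\nu^{*}$ in Theorem~\ref{th-3.2} --- holds precisely for $\nu\in\mathopen{]}\bar{a}\omega_{+},g_{0}\mathclose{[}$, so that $\nu^{*}=\bar{a}\omega_{+}$, while $(H^{\rm II}_{3})$ (for $u(t)\leq-d$) yields $\nu^{**}=\bar{a}\max\{\omega_{-},\omega_{+}\}$. Hence Theorem~\ref{th-3.2} delivers at least one solution for $\bar{a}\omega_{+}<s<s_{0}$, not for $\bar{a}\omega_{-}<s<s_{0}$ as in the third bullet of the statement; when $\omega_{-}<\omega_{+}$ these intervals differ and nothing in your argument (nor in the interval structure of the solution set established in Step~1 of the proof of Theorem~\ref{th-3.1}) covers $s\in\mathopen{]}\bar{a}\omega_{-},\bar{a}\omega_{+}\mathclose{]}$. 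The exact mirror of Theorem~\ref{th-4.1} under $u\mapsto-u$, which swaps $\omega_{-}$ and $\omega_{+}$, reads $s_{0}\in\mathopen{]}\bar{a}\omega_{+},+\infty\mathclose{[}$ and ``at least one solution for $\bar{a}\omega_{+}<s<s_{0}$'', so the occurrences of $\bar{a}\omega_{-}$ in the statement look like a misprint; but since you are proving the statement as written, you cannot simply assert that the thresholds ``identify with the values appearing in the statement'': you should either prove the $\bar{a}\omega_{+}$ version or explain how the extra range is handled. (The localization $s_{0}>\bar{a}\omega_{-}$ itself is harmless, since $s_{0}\geq g_{0}>\bar{a}\max\{\omega_{-},\omega_{+}\}$ by \eqref{cond-ex3}.) Everything else in your write-up is correct.
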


\begin{theorem}\label{th-4.4}
Assume that there exists $D>0$ such that
\begin{equation}\label{cond-ex4}
q(u)>\omega_{-},\; \text{ for $u\leq-D$,} \quad\text{ and }\quad q(u)>\omega_{+},\; \text{ for $u\geq D$.}
\end{equation}
Moreover, suppose that $\max\{\omega_{-},\omega_{+}\}>-\infty$.
Then, the conclusion of Theorem~\ref{th-4.3} holds.
\end{theorem}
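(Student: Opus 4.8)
The plan is to reduce Theorem~\ref{th-4.4} to Theorem~\ref{th-4.3} by exhibiting a suitable strict lower solution. Since Theorem~\ref{th-4.3} requires the existence of a number $u_{0}$ with $0 \geq q(u_{0}) > \max\{\omega_{-},\omega_{+}\}$ and the quantitative inequality \eqref{cond-ex3}, and condition \eqref{cond-ex4} only provides a qualitative one-sided comparison $q(u) > \omega_{\pm}$ for $|u|$ large, I cannot apply Theorem~\ref{th-4.3} directly; instead I would mirror the proof of Theorem~\ref{th-4.2}, invoking the variant of Theorem~\ref{th-3.2} described in Remark~\ref{rem-3.1}, which only needs $(H^{\bigstar}_{2})$ in place of $(H^{\rm{II}}_{2})$. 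Assume without loss of generality that $\max\{\omega_{-},\omega_{+}\} = \omega_{-} > -\infty$ (the case $\omega_{+}$ is symmetric, handled by the change of variable $x := -u$ already used in the proof of Theorem~\ref{th-3.2}).

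First I would set $g(t,u) := a(t)q(u) - e(t)$ and check the structural hypotheses exactly as in the proof of Theorem~\ref{th-4.2}: condition $(H_{0})$ holds since $q$ is continuous and $a \in L^{\infty}$; condition $(H^{\rm{II}}_{1})$ holds because $q$ is bounded from above (recall the normalization $\max\{\omega_{-},\omega_{+}\} < 0$ arranged at the start of Section~\ref{section-4.1}, or more directly: \eqref{cond-ex4} together with continuity of $q$ gives $q$ bounded above on $\mathbb{R}$), so with $\gamma_{0}$ the positive part of $a(t)q_{0} - e(t)$, where $q_{0} := \max q \in \mathbb{R}$, we get $g(t,u) \leq \gamma_{0}(t)$. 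Next, since $\omega_{-} > -\infty$, Proposition~\ref{prop-3.2} (applied on the half-line $u \leq 0$, where $|g(t,u)| \leq \|a\|_{\infty}\|q\|_{\infty,(-\infty,0]} + |e(t)|$ is dominated by an $L^{1}$ function because $q$ has a finite limit at $-\infty$ hence is bounded on $(-\infty,0]$) yields, for the value $D$ in \eqref{cond-ex4}, a constant $g_{0} \in \mathbb{R}$ and a $T$-periodic function $u_{0} \in \mathcal{C}^{1}_{T}$ solving $(\mathscr{E}_{g_{0}})$ with $u_{0}(t) \leq -D$ for all $t$; this is precisely $(H^{\bigstar}_{2})$.

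Then I would verify the Villari-type conditions $(H^{\rm{II}}_{3})$ and $(H^{\rm{II}}_{4})$. From \eqref{cond-ex4}, for $u_{0}(t) \leq -D$ we have $q(u_{0}(t)) > \omega_{-}$ pointwise, and since $a \geq 0$ with $\bar{a} > 0$, the generalized mean value theorem gives $g_{0} = \tfrac{1}{T}\int_{0}^{T} a(t)q(u_{0}(t))\,dt = \bar{a}\,q(u_{0}(\tilde t)) < \bar a \omega_{-} \le \bar a \omega_{+}$ for some $\tilde t$; wait — this is the wrong direction, since \eqref{cond-ex4} gives $q > \omega_{\pm}$, so I actually get $g_{0} > \bar{a}\omega_{-}$, which is consistent with needing $\nu < \min\{0, g_{0}\}$ in $(H^{\rm{II}}_{3})$–$(H^{\rm{II}}_{4})$ with $\nu$ close to $\bar a \omega_-$. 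Concretely: fix any $\nu \in \mathopen]\bar{a}\omega_{-}, \min\{0,g_{0}\}\mathclose[$; by \eqref{cond-q} there is $\kappa_{\nu} > 0$ with $\bar{a}q(\xi) < \nu$ for $\xi \leq -\kappa_{\nu}$, so that for any $u \in \mathcal{C}^{1}_{T}$ with $u(t) \leq -d := -\kappa_{\nu}$, the mean value theorem again gives $\tfrac{1}{T}\int_{0}^{T} a(t)q(u(t))\,dt = \bar{a}q(u(\tilde t)) < \nu$, establishing $(H^{\rm{II}}_{3})$; by the arbitrariness of $\nu$ one gets $\nu^{*} = \bar{a}\omega_{-}$, and $(H^{\rm{II}}_{4})$ follows in the same way for every $\nu \in \mathopen]\bar{a}\max\{\omega_{-},\omega_{+}\}, \min\{0,g_{0}\}\mathclose[$, so that $\nu^{**} = \bar{a}\max\{\omega_{-},\omega_{+}\}$. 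Feeding these into the Remark~\ref{rem-3.1} variant of Theorem~\ref{th-3.2} produces the $s_{0} \in \mathopen]\bar a \omega_-, +\infty\mathclose[$ and the full trichotomy of Theorem~\ref{th-4.3}.

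The main obstacle I anticipate is the bookkeeping around $(H^{\bigstar}_{2})$ and the sign of $g_{0}$: one must confirm that the $g_{0}$ delivered by Proposition~\ref{prop-3.2} — which is merely $\tfrac{1}{T}\int_{0}^{T}\hat g(t,u_{0}(t))\,dt$ for the truncated $\hat g$, hence equals $\bar a q(u_0(\tilde t)) - \bar e$ — indeed satisfies $g_{0} < 0$ (needed so that the intervals $\mathopen]\nu^{*}, \min\{0,g_{0}\}\mathclose[$ etc.\ are nonempty and so that Remark~\ref{rem-3.1} applies with $u_{0}$ a strict lower solution for $s < g_{0}$). This is where the normalization $\max\{\omega_{-},\omega_{+}\} < 0$ from the preamble of Section~\ref{section-4.1} is essential: it forces $q(u_{0}(\tilde t)) < \omega_{-} < 0$ for $u_0 \le -D$ with $D$ large, hence $g_{0} < 0$; and the freedom in choosing $\bar u_{0}$ large in the proof of Proposition~\ref{prop-3.2} lets us take $D$ as large as we wish. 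The symmetric case $\max\{\omega_{-},\omega_{+}\} = \omega_{+}$ is then disposed of by the substitution $x := -u$, which swaps $\omega_{\pm}$ and converts \eqref{cond-ex4} into its mirror image, reducing it to the case just treated.
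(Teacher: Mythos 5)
Your overall strategy is exactly the one the paper intends: Theorem~\ref{th-4.4} is to Theorem~\ref{th-4.3} what Theorem~\ref{th-4.2} is to Theorem~\ref{th-4.1}, and the paper's (unwritten) proof is precisely the dualization of the proof of Theorem~\ref{th-4.2} --- the Remark~\ref{rem-3.1} variant of Theorem~\ref{th-3.2}, with $(H^{\bigstar}_{2})$ supplied by Proposition~\ref{prop-3.2} on the appropriate half-line and the Villari conditions checked via the generalized mean value theorem. Your verification of $(H_{0})$, $(H^{\rm{II}}_{1})$ and $(H^{\bigstar}_{2})$, and your attention to the sign of $g_{0}$ (a point the paper glosses over), are in order, modulo one slip: for $u_{0}\leq -D$ condition \eqref{cond-ex4} gives $q(u_{0}(\tilde t))>\omega_{-}$, not $q(u_{0}(\tilde t))<\omega_{-}$; what you actually need, and what holds for $D$ large because $q\to\omega_{-}<0$ after the normalization of Section~\ref{section-4.1}, is $\omega_{-}<q(u_{0}(\tilde t))<0$, which still yields $\bar a\,\omega_{-}<g_{0}<0$.

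The one genuine error is in the final step: you attach the Villari condition at $-\infty$, namely $(H^{\rm{II}}_{3})$, to $\nu^{*}$ and the one at $+\infty$, namely $(H^{\rm{II}}_{4})$, to $\nu^{**}$, whereas Theorem~\ref{th-3.2} defines $\nu^{*}$ through $(H^{\rm{II}}_{4})$ (test functions $u\geq d$) and $\nu^{**}$ through $(H^{\rm{II}}_{3})$ (test functions $u\leq -d$). Carried out with the correct pairing, the computation gives $\nu^{*}=\bar a\,\omega_{+}$ and $\nu^{**}=\bar a\max\{\omega_{-},\omega_{+}\}$: your two-solution range is therefore right, but the existence range and the location of $s_{0}$ come out as $\mathopen{]}\bar a\,\omega_{+},s_{0}\mathclose{[}$ rather than $\mathopen{]}\bar a\,\omega_{-},s_{0}\mathclose{[}$. (The statement of Theorem~\ref{th-4.3} displays the same $\omega_{-}$/$\omega_{+}$ asymmetry with respect to a direct dualization of Theorem~\ref{th-4.1}, so you may simply be matching the target; but deriving ``$\nu^{*}=\bar a\,\omega_{-}$'' from $(H^{\rm{II}}_{3})$ is not what Theorem~\ref{th-3.2} licenses, and when $\omega_{+}>\omega_{-}$ the interval $\mathopen{]}\bar a\,\omega_{-},s_{0}\mathclose{[}$ is strictly larger than what that theorem delivers.) Finally, for the case $\max\{\omega_{-},\omega_{+}\}=\omega_{+}$ the substitution $x:=-u$ turns a type-II nonlinearity into a type-I one (it sends you back to Theorem~\ref{th-4.2}, exchanging $\max$ for $\min$), so within your direct approach the cleaner fix is to apply Proposition~\ref{prop-3.2} on the half-line $u\geq 0$ and take $u_{0}\geq D$, exactly as the proof of Theorem~\ref{th-4.2} does in its second case.
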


We conclude the discussion concerning the $T$-periodic forced Li\'{e}nard-type equation $(\mathscr{W\!E}_{s})$ by presenting some examples. In this manner, we highlight the potentiality of the results proposed in this paper that, acting in a unified framework, lead to a generalization of some classical theorems. Furthermore, our approach allows us to treat more general situations by considering several types of nonlinearities (cf.~Table~\ref{tab-1}).

\begin{table}
\begin{center}
{\captionof{table}{Illustration of the graphs of the nonlinearities $q$ considered in the examples presented in Section~\ref{section-4.1}.} \label{tab-1}}
\begin{tabular}{cc}
\toprule
\begin{tikzpicture}[scale=0.6]
\draw[->] (-4,0) -- (4,0) node[right] {$u$};
\draw[->] (0,-2) -- (0,4) node[above] {$q(u)$};
\draw[very thick,black,domain=-3.8:3.7,samples=1000] plot (\x,{abs(\x)});
\end{tikzpicture}   &
\begin{tikzpicture}[scale=0.6]
\draw[->] (-4,0) -- (4,0) node[right] {$u$};
\draw[->] (0,-2) -- (0,4) node[above] {$q(u)$};
\draw[very thick,black,domain=-3.8:3.7,samples=1000] plot (\x,{3*exp(-(\x)^2)});
\end{tikzpicture}\\
Example~\ref{ex-4.1}& Example~\ref{ex-4.2}\\
&\\ \midrule    &\\
\begin{tikzpicture}[scale=0.6]
\draw[->] (-4,0) -- (4,0) node[right] {$u$};
\draw[->] (0,-2) -- (0,4) node[above] {$q(u)$};
\draw[dashed] (-3.8,1) -- (0,1);
\draw[very thick,black,domain=-1.9:1.5,samples=1000] plot (2*\x,{(exp((\x))+1)*((\x)^2)/((\x)^2+1))});
\end{tikzpicture}   &
\begin{tikzpicture}[scale=0.6]
\draw[->] (-4,0) -- (4,0) node[right] {$u$};
\draw[->] (0,-2) -- (0,4) node[above] {$q(u)$};
\draw[very thick,black,domain=-3.8:3.7,samples=1000] plot (\x,{3*(\x*exp(-(\x)^2))});
\end{tikzpicture}\\
Example~\ref{ex-4.3}& Example~\ref{ex-4.4}\\
&\\ \midrule    &\\
\begin{tikzpicture}[scale=0.6]
\draw[->] (-4,0) -- (4,0) node[right] {$u$};
\draw[->] (0,-4) -- (0,4) node[above] {$q(u)$};
\draw[dashed] (-3.8,-1) -- (3.7,-1);
\draw[very thick,black,domain=-3.8:3.7,samples=1000] plot (\x,{((exp(-(\x)^2)+2)*((\x)^6-(\x)^4-(\x)^2)+1)/((\x)^6+1)+5*exp(-(\x)^2)-3});
\end{tikzpicture}   &
\begin{tikzpicture}[scale=0.6]
\draw[->] (-4,0) -- (4,0) node[right] {$u$};
\draw[->] (0,-4) -- (0,4) node[above] {$q(u)$};
\draw[dashed] (-3.8,-pi/2) -- (0,-pi/2);
\draw[dashed] (0,pi/2) -- (3.7,pi/2);
\draw[very thick,gray,domain=-1.55:1.55,samples=1000] plot (2*\x,{(\x)^3});
\draw[very thick,black,domain=-7.6:7.4,samples=1000] plot (0.5*\x,{rad(atan(\x))});
\end{tikzpicture}\\
Example~\ref{ex-4.5}& Example~\ref{ex-4.6}\\
&\\
\bottomrule
\end{tabular}
\end{center}
\end{table}

\begin{example}\label{ex-4.1}
Let us consider $q\colon\mathbb{R}\to\mathbb{R}$ defined as $q(u) := |u|$.
We notice that $\omega^{\pm}=+\infty$. This nonlinearity is of type~I and characterizes the classical Ambrosetti--Prodi periodic problem.
An application of Theorem~\ref{th-4.1} ensures the existence of $s_{0}\in\mathbb{R}$ such that $(\mathscr{W\!E}_{s})$ has zero, at least one or at least two $T$-periodic solutions according to $s<s_{0}$, $s=s_{0}$ or $s>s_{0}$.
\end{example}

\begin{example}\label{ex-4.2}
Let us consider $q\colon\mathbb{R}\to\mathbb{R}$ defined as $q(u) := e^{-u^{2}}$.
We notice that $q$ is a Gaussian function with $\omega_{\pm}=0$, so that is of type~II, and corresponds to the classical problem by Ward. An application of Theorem~\ref{th-4.4} ensures the existence of $s_{0}\in\mathbb{R}$ such that
$(\mathscr{W\!E}_{s})$ has zero, at least one or at least two $T$-periodic solutions according to $s>s_{0}$, $s=s_{0}$ or $0<s<s_{0}$. Moreover, by an integration on a period, one can observe that for $s\leq 0$ equation $(\mathscr{W\!E}_{s})$ has no $T$-periodic solutions.

We notice that, if we consider $q(u) := e^{-u^{2}}+\kappa$ with $\kappa\not=0$, then the (BM) alternative holds without assuming a uniform condition in the limits. More precisely, there are zero, at least one or at least two $T$-periodic solutions according to $s>s_{0}$, $s=s_{0}$ or $\bar{a}\kappa<s<s_{0}$. In the same context, one could be also driven to apply Theorem~\ref{th-4.3}. However, this an example of the difference between these results. Indeed, if for example $\kappa=-1$, then Theorem~\ref{th-4.3} ensures the existence of $s_{0}\in\mathopen{]}-\bar{a},+\infty\mathclose{[}$ such that $(\mathscr{W\!E}_{s})$ satisfies the above alternative under the additional hyphotesis $\bar{a}>\|e^{+}\|_{\infty}$ (cf.~condition~\eqref{cond-ex3}).
\end{example}

\begin{example}\label{ex-4.3}
Let us consider $q\colon\mathbb{R}\to\mathbb{R}$ defined as
\begin{equation*}
q(u) := \dfrac{(e^{u}+1)u^{2}}{u^{2}+1},
\end{equation*}
where $\omega_{-}=1$ and $\omega_{+}=+\infty$. An application of Theorem~\ref{th-4.2} guarantees the existence of $s_{0}\in\mathbb{R}$ such that $(\mathscr{W\!E}_{s})$ has zero, at least one or at least two $T$-periodic solutions according to $s<s_{0}$, $s=s_{0}$ or $s_{0}\leq s<\bar{a}\omega_{-}$. On the other hand, since $\omega_{-}\in\mathopen{]}0,+\infty\mathclose{]}$, whether the terms $a,e$ satisfy the additional condition \eqref{cond-ex1}, the same alternative can be proved via Theorem~\ref{th-4.1}.
\end{example}

\begin{example}\label{ex-4.4}
Let us consider $q\colon\mathbb{R}\to\mathbb{R}$ defined as
$q(u) := u\,e^{-u^{2}}$,
where $\omega_{\pm}=0$. We notice that either condition \eqref{cond-ex2} or condition \eqref{cond-ex4} are not satisfied, so that, both Theorem~\ref{th-4.2} and Theorem~\ref{th-4.4} cannot be applied in such a framework. However, one can recover an (AP) alternative through Theorem~\ref{th-4.1} for the auxiliary equation
$(\phi(u'))'+f(u)u'+a(t)q_{1}(u)=\ell+e_{1}(t)$,
where $q_{1}(u)=q(u)-\min q$, $e_{1}(t)=e(t)-\bar{e}-(a(t)-\bar{a})\min q$, and $\ell=s+\bar{e}-\bar{a}\min q$.
In this way $a,e_{1}$ satisfy \eqref{cond-ex1}. On the other hand, in a similar manner, one can recover a (BM) alternative through Theorem~\ref{th-4.3}.
\end{example}

\begin{example}\label{ex-4.5}
Let us consider $q\colon\mathbb{R}\to\mathbb{R}$ defined as
\begin{equation*}
q(u) := \dfrac{(e^{-u^{2}}+2)(u^{6}-u^{4}-u^{2}+1)}{u^{6}+1}+5e^{-u^{2}}-3,
\end{equation*}
where $\omega_{\pm}=-1$. We notice that condition \eqref{cond-ex2} is satisfied, so that from Theorem~\ref{th-4.2} we can prove that there exists $s_{0}<-\bar{a}$ such that
$(\mathscr{W\!E}_{s})$ has zero, at least one or at least two $T$-periodic solutions according to $s<s_{0}$, $s=s_{0}$ or $s_{0}<s<-\bar{a}$. On the other hand, if condition \eqref{cond-ex3} is satisfied too, then an application of Theorem~\ref{th-4.3} gives the existence of $s_{1}>-\bar{a}$ such that $(\mathscr{W\!E}_{s})$ has zero, at least one or at least two $T$-periodic solutions according to $s>s_{1}$, $s=s_{1}$ or $-\bar{a}<s<s_{1}$. In this manner, a combination of the classical (AP) and (BM) alternatives hold simultaneously, leading to an interesting phenomenon of $0$~$1$~$2$~-~$2$~$1$~$0$ solutions to the periodic problem associated with $(\mathscr{W\!E}_{s})$.
\end{example}

\begin{example}\label{ex-4.6}
Let us consider $q\colon\mathbb{R}\to\mathbb{R}$ defined as
$q(u) := u^{2 n+1}$, with $n\in\mathbb{N}$,
or $q(u) := \mathrm{arctan}(u)$.
In these cases, $q$ is a non-decreasing function, and consequently, the above theorems do not apply. However, when the nonlinearity is bounded, one could achieve the existence of at least a $T$-periodic solution for some ranges of the parameter $s$ (cf.~\cite{MRZ-00,Om-88}).
\end{example}

\subsection{Radial Neumann problem on annular domains}\label{section-4.2}

Let us consider the open annular domain
\begin{equation*}
\mathfrak{A} := \bigl{\{} x\in\mathbb{R}^{N} \colon R_{i} < |x| < R_{e} \bigr{\}},\quad \text{with } \, 0<R_{i}<R_{e},
\end{equation*}
where $|\cdot|$ denotes the usual Euclidean norm in $\mathbb{R}^{N}$, with $N\geq2$.
In the present section we study non-existence, existence and multiplicity of (classical) radially symmetric solutions to the parameter-dependent Neumann problem
\begin{equation}\label{pb-PDE}
\begin{cases}
\, \nabla\cdot(A(|\nabla u|)\nabla u) + G(|x|,u) = s & \text{in $\mathfrak{A}$,} \\\vspace*{2pt}
\, \dfrac{\partial u}{\partial \nu} = 0 & \text{on $\partial\mathfrak{A}$,}
\end{cases}
\end{equation}
where $A\colon\mathopen{]}0,+\infty\mathclose{[}\to\mathopen{]}0,+\infty\mathclose{[}$ is a continuous function such that the map $\phi(\xi)=A(|\xi|)\xi$ for $\xi\not=0$ and $\phi(0)=0$ is a homeomorphism on the real line, $G\colon\mathopen{[}R_{i},R_{e}\mathclose{]}\times\mathbb{R}\to\mathbb{R}$ is a continuous map, and $s\in\mathbb{R}$. In this manner, we pursue the study started in \cite{So-18,SoZa-18} for Neumann problems with local coercive nonlinearities.

When dealing with radially symmetric solutions to \eqref{pb-PDE}, one is led to define $r=|x|$, $v(r)=v(|x|)=u(x)$, and so to study the problem
\begin{equation}\label{pb-ODE}
\begin{cases}
\, (r^{N-1}A(|v'|)v')' + r^{N-1} G(r,v) = r^{N-1} s, \\
\, v'(R_{i}) = v'(R_{e}) = 0.
\end{cases}
\end{equation}
We notice that the map $\xi \mapsto A(|\xi|)\xi$ is an increasing homeomorphism.
Hence, looking at solutions to \eqref{pb-ODE}, we now present our result in the framework of a more general problem. Namely, we deal with a Neumann problem of the form
\begin{equation}\label{pb-N}
\begin{cases}
\, (\zeta(t) \phi(u'))' + g(t,u(t)) = p(t)s, \\
\, u'(a) = u'(b) = 0,
\end{cases}
\end{equation}
where $a<b$, $\zeta,p\colon\mathopen{[}a,b\mathclose{]}\to\mathopen{]}0,+\infty\mathclose{[}$ are continuos functions, $\phi\colon\mathbb{R}\to\mathbb{R}$ is an increasing homeomorphism such that $\phi(0)=0$, $g\colon\mathopen{[}a,b\mathclose{]}\times\mathbb{R}\to\mathbb{R}$ satisfies Carath\'eodory conditions, and $s\in\mathbb{R}$.

First of all, let
$
X := \bigl{\{} u\in\mathcal{C}^{1}(\mathopen{[}a,b\mathclose{]}) \colon u'(a)=u'(b)=0 \bigr{\}}
$
be the Banach space endowed with the norm $\|u\|_{X}:=\|u\|_{\infty}+\|u'\|_{\infty}$.
Next we define the completely continuous operator $\mathcal{G}\colon X\to X$ as
\begin{equation*}
(\mathcal{G} u)(t) := u(a) - \dfrac{1}{b-a} \int_{a}^{b} h(t,u(t)) \,dt
+ \int_{a}^{t} \phi^{-1} \biggl{(} - \dfrac{1}{\zeta(t)} \int_{a}^{s} h(\xi,u(\xi)) \,d\xi \biggr{)} \,ds,
\end{equation*}
where $h(t,u):=g(t,u)-p(t)s$.
One can easily verify that $u$ is a solution to problem \eqref{pb-N} if and only if $u$ is a fixed point of $\mathcal{G}$ (cf.~\cite[Section~2]{GHMaZa-93}).

In this setting, up to minimal changes in the discussion in Section~2 and Section~3, all the results presented therein hold for problem \eqref{pb-N}, too.

Taking into account that
\begin{equation*}
\int_{\Omega} G(|x|,u(x))\,dx = \omega_{N-1}\int_{R_{i}}^{R_{e}} r^{N-1}G(|r|,u(r))\,dr,
\end{equation*}
where $\omega_{N-1}$ is the area of the unit sphere in $\mathbb{R}^{N}$ (cf.~\cite[Section~2.7]{Fo-84}), we obtain the following theorem for radially symmetric solutions to the Neumann problem \eqref{pb-PDE}.

\begin{theorem}\label{th-4.Neu}
Assume that
\begin{itemize}
\item[$(G_{1})$] {there exists $C_0>0$ such that $G(|x|,u) \geq - C_{0}$, for all $u\in\mathbb{R}$ and $x\in\Omega$;}
\item[$(G_{2})$] {there exists $u_{0},g_{0}\in \mathbb{R}$ such that $G(|x|,u_{0}) \leq g_{0}$, for all $x\in\Omega$;}
\item[$(G_{3})$]{for each $\sigma$ there exists $d_{\sigma} > 0$ such that $\tfrac{1}{|\Omega|}\int_{\Omega} G(|x|,u(x))\,dx > \sigma$
for each radially symmetric $u\in \mathcal{C}^{0}(\overline{\Omega})\cap\mathcal{C}^{1}(\Omega)$ with $u(x) \leq -d_{\sigma}$ for all $x\in \Omega$;}
\item[$(G_{4})$]{for each $\sigma$ there exists $d_{\sigma} > 0$ such that $\tfrac{1}{|\Omega|}\int_{\Omega} G(|x|,u(x))\,dx > \sigma$
for each radially symmetric $u\in \mathcal{C}^{0}(\overline{\Omega})\cap\mathcal{C}^{1}(\Omega)$ with $u(x) \geq d_{\sigma}$ for all $x\in \Omega$.}
\end{itemize}
Then, there exists $s_{0}\in \mathbb{R}$ such that
\begin{itemize}
\item for $s < s_{0}$, problem \eqref{pb-PDE} has no radially symmetric solutions;
\item for $s =s_{0}$, problem \eqref{pb-PDE} has at least one radially symmetric solution;
\item for $s > s_{0}$, problem \eqref{pb-PDE} has at least two radially symmetric solutions.
\end{itemize}
\end{theorem}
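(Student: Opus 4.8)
The plan is to reduce the Neumann problem \eqref{pb-PDE} to the one-dimensional weighted Neumann problem \eqref{pb-N} and then to apply the analogue of Theorem~\ref{th-3.1} valid in that framework. First I would pass to radially symmetric solutions $u(x)=v(|x|)$ of \eqref{pb-PDE}, which as recalled above solve \eqref{pb-ODE}; this is precisely problem \eqref{pb-N} with $\mathopen{[}a,b\mathclose{]}=\mathopen{[}R_{i},R_{e}\mathclose{]}$, $\zeta(t)=p(t)=t^{N-1}$, $\phi(\xi)=A(|\xi|)\xi$ and $g(t,u)=t^{N-1}G(t,u)$. Since $R_{i}>0$, the weights $\zeta,p$ are continuous and bounded away from $0$ and $+\infty$, $\phi$ is an increasing homeomorphism with $\phi(0)=0$, and $g$ is continuous, so — as noted after \eqref{pb-N} — up to minor modifications all the material of Sections~\ref{section-2}--\ref{section-3} carries over to \eqref{pb-N}. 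In particular, integrating the first equation of \eqref{pb-N} over $\mathopen{[}a,b\mathclose{]}$ and using $u'(a)=u'(b)=0$ shows that every solution satisfies
\begin{equation*}
s = \frac{\int_{a}^{b} g(t,u(t))\,dt}{\int_{a}^{b} p(t)\,dt} = \frac{1}{|\Omega|}\int_{\Omega} G(|x|,u(x))\,dx,
\end{equation*}
by the coarea identity quoted before the statement.

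Next I would verify the hypotheses of the Neumann counterpart of Theorem~\ref{th-3.1} for the function $g(t,u)=t^{N-1}G(t,u)$. Continuity of $G$ on the compact strips $\mathopen{[}R_{i},R_{e}\mathclose{]}\times\mathopen{[}-\ell,\ell\mathclose{]}$ gives $(H_{0})$; from $(G_{1})$ one has $g(t,u)\geq -R_{e}^{N-1}C_{0}$ for all $u$, so $(H^{\rm{I}}_{1})$ holds with a constant $\gamma_{0}$; from $(G_{2})$ the constant function $u_{0}$ satisfies $g(t,u_{0})=t^{N-1}G(t,u_{0})\leq \max\{R_{i}^{N-1},R_{e}^{N-1}\}\,|g_{0}|$, so $(H^{\rm{I}}_{2})$ holds and, exactly as in the proof of Theorem~\ref{th-3.1}, the constant $u_{0}$ is the seed of a strict upper solution for $s$ large. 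Finally, since every function in the Neumann space $X$, viewed as a radial function on $\Omega$, is admissible as a test function in $(G_{3})$ and $(G_{4})$, the identity above turns $(G_{3})$ and $(G_{4})$ into the Villari-type conditions $(H^{\rm{I}}_{3})$ and $(H^{\rm{I}}_{4})$, valid for \emph{every} value of $\sigma$; hence $\sigma^{*}=\sigma^{**}=+\infty$ in the notation of Theorem~\ref{th-3.1}.

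With these verifications in hand, the Neumann counterpart of Theorem~\ref{th-3.1} yields $s_{0}\in\mathbb{R}$ such that \eqref{pb-ODE} has no solution for $s<s_{0}$, at least one solution for $s=s_{0}$, and — because $(H^{\rm{I}}_{4})$ holds up to $\sigma^{**}=+\infty$ — at least two solutions for every $s>s_{0}$. Transferring back through $u(x)=v(|x|)$ produces the claimed trichotomy for radially symmetric solutions of \eqref{pb-PDE}, which concludes the argument.

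I expect the only genuinely delicate point to be the passage carried out in the first two steps: one has to make sure that the positive, bounded, bounded-away-from-zero weight $\zeta(t)$ in the differential operator and the weight $p(t)$ in the forcing term, together with the Neumann (rather than periodic) boundary conditions, do not spoil the a priori estimates of Lemmas~\ref{lem-2.3}, \ref{lem-max-min} and \ref{lem-max-min-2}, the continuation Theorem~\ref{MaMa-th}, and the degree computations of Theorems~\ref{th-deg} and \ref{th-deg-dual}; and that the bookkeeping relating the threshold $\sigma$ in $(G_{3})$--$(G_{4})$ to the averaged quantity $\tfrac{1}{|\Omega|}\int_{\Omega} G$ is performed correctly. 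Everything else is a routine translation of the hypotheses.
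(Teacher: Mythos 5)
Your proposal is correct and follows essentially the same route as the paper, which obtains Theorem~\ref{th-4.Neu} by passing to the radial ODE \eqref{pb-ODE}, viewing it as an instance of \eqref{pb-N} with $\zeta(t)=p(t)=t^{N-1}$, invoking the Neumann analogue of Theorem~\ref{th-3.1} (whose validity ``up to minimal changes'' is exactly the point the paper asserts and you flag as the delicate step), and using the coarea identity to convert $(G_{3})$--$(G_{4})$ into the Villari conditions with $\sigma^{*}=\sigma^{**}=+\infty$. Your verification of $(H_{0})$, $(H^{\rm{I}}_{1})$, $(H^{\rm{I}}_{2})$ and of the normalization $\int_{a}^{b}g/\int_{a}^{b}p=\tfrac{1}{|\Omega|}\int_{\Omega}G$ is accurate and in fact supplies more detail than the paper itself.
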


A direct application of Theorem~\ref{th-4.Neu} is the following one.

\begin{corollary}\label{cor-4.Neu}
Let $a\in \mathcal{C}(\mathopen{[}R_{i},R_{e}\mathclose{]},\mathbb{R}^{+})$ be such that $a(\xi_{0})>0$ for some $\xi_{0}\in\mathopen{[}R_{i},R_{e}\mathclose{]}$. Let $q\in\mathcal{C}(\mathbb{R})$ be such that $\lim_{|u|\to+\infty}q(u)=+\infty$.
Let
$G(|x|,u):= a(|x|)q(u)$.
Then, the conclusion of Theorem~\ref{th-4.Neu} holds.
\end{corollary}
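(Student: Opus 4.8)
The plan is to derive Corollary~\ref{cor-4.Neu} from Theorem~\ref{th-4.Neu} by verifying the four hypotheses $(G_1)$--$(G_4)$ for the nonlinearity $G(|x|,u):=a(|x|)q(u)$. Since $q$ is continuous with $\lim_{|u|\to+\infty}q(u)=+\infty$, the quantity $q_{0}:=\min_{u\in\mathbb{R}}q(u)$ is finite; as $a$ is continuous and non-negative on the compact interval $\mathopen{[}R_i,R_e\mathclose{]}$, we have $0\le a(|x|)q(u)$ whenever $q(u)\ge 0$, and $a(|x|)q(u)\ge \|a\|_{\infty}q_{0}\ge -\|a\|_{\infty}|q_{0}|$ in all cases. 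Hence $(G_1)$ holds with $C_{0}:=\|a\|_{\infty}|q_{0}|$. For $(G_2)$, pick any $u_{0}$ realizing $q(u_{0})=q_{0}$ (or indeed any fixed $u_{0}$); then $G(|x|,u_{0})=a(|x|)q(u_{0})\le \|a\|_{\infty}q(u_{0})=:g_{0}$ for all $x$.

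The heart of the argument is the Villari-type conditions $(G_3)$ and $(G_4)$, and here the point is that the weight $a$ is allowed to vanish, so the mean $\tfrac{1}{|\Omega|}\int_{\Omega}a(|x|)q(u(x))\,dx$ cannot be controlled by a pointwise lower bound alone; one needs the coercivity of $q$ together with positivity of $a$ on a nontrivial set. Fix $\sigma\in\mathbb{R}$. Let $m:=\tfrac{1}{|\Omega|}\int_{\Omega}a(|x|)\,dx$; this is strictly positive because $a\ge 0$ is continuous and $a(\xi_{0})>0$ for some $\xi_{0}$, so $a>0$ on a neighborhood of the sphere $|x|=\xi_{0}$ intersected with $\Omega$, a set of positive measure. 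By coercivity of $q$, choose $d_{\sigma}>0$ so large that $q(u)\ge \dfrac{\sigma+1+\|a\|_{\infty}|q_{0}|}{m}$ for all $|u|\ge d_{\sigma}$. Then for any admissible radially symmetric $u$ with $u(x)\le -d_{\sigma}$ (resp.\ $u(x)\ge d_{\sigma}$) on $\Omega$, we split the integral as
\begin{equation*}
\frac{1}{|\Omega|}\int_{\Omega}a(|x|)q(u(x))\,dx \ge \frac{1}{|\Omega|}\int_{\Omega}a(|x|)\cdot\frac{\sigma+1+\|a\|_{\infty}|q_{0}|}{m}\,dx = \sigma+1+\|a\|_{\infty}|q_{0}| > \sigma,
\end{equation*}
using only that $a\ge 0$ and $q(u(x))\ge \tfrac{\sigma+1+\|a\|_{\infty}|q_{0}|}{m}$ pointwise. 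This establishes both $(G_3)$ and $(G_4)$ (the two cases $u\le -d_\sigma$ and $u\ge d_\sigma$ being identical since $|u(x)|\ge d_\sigma$ in each).

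With $(G_1)$--$(G_4)$ in hand, Theorem~\ref{th-4.Neu} applies verbatim and yields the $s_{0}\in\mathbb{R}$ with the stated zero/one/two trichotomy, which is exactly the conclusion of the corollary. The only mild subtlety — and the step I would be most careful about — is the strict positivity of $m=\tfrac{1}{|\Omega|}\int_{\Omega}a(|x|)\,dx$: it is essential that $a$ does not vanish identically, and this is guaranteed precisely by the hypothesis $a(\xi_{0})>0$ together with continuity of $a$, so that $a$ stays positive on an open set of positive Lebesgue measure in $\Omega$ (in the radial variable this is an interval around $\xi_0$, which has positive weight $r^{N-1}$). Everything else is a routine translation of the coercivity of $q$ into the integral Villari conditions, exactly in the spirit of the proof of Theorem~\ref{th-4.1}.
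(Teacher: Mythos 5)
Your proof is correct and is essentially the intended argument: the paper offers no separate proof (it calls the corollary ``a direct application'' of Theorem~\ref{th-4.Neu}), and your verification of $(G_1)$--$(G_4)$ mirrors the proof of Theorem~\ref{th-4.1}, the only difference being that there the Villari conditions are obtained via the generalized mean value theorem (writing $\tfrac{1}{|\Omega|}\int_{\Omega}a(|x|)q(u(x))\,dx$ as $q(u(\tilde t))$ times the positive mean of $a$) whereas you use a pointwise lower bound on $q\circ u$ together with $a\ge 0$ --- both hinge on the same key fact you correctly isolate, namely $\int_{\Omega}a(|x|)\,dx>0$ from $a(\xi_0)>0$ and continuity. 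One cosmetic slip: in $(G_2)$ the inequality $a(|x|)q(u_0)\le\|a\|_{\infty}q(u_0)$ reverses when $q(u_0)<0$ (and likewise the intermediate claim $a(|x|)q(u)\ge\|a\|_{\infty}q_0$ ``in all cases'' in $(G_1)$ fails when $q_0>0$), so take $g_0:=\max\{0,\|a\|_{\infty}q(u_0)\}$ and keep only your final bound $G\ge-\|a\|_{\infty}|q_0|$; since Theorem~\ref{th-4.Neu} asks for $(G_3)$--$(G_4)$ for every $\sigma$, the precise value of $g_0$ is immaterial and nothing else in your argument changes.
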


\bibliographystyle{elsart-num-sort}
\bibliography{FSZ_biblio}

\end{document}